\crefname{section}{§}{§§}
\Crefname{section}{§}{§§}
\DeclarePairedDelimiter\floor{\lfloor}{\rfloor}
\newtheorem{theorem}{Theorem}[section]
\newtheorem{lemma}[theorem]{Lemma}
\newtheorem{proposition}[theorem]{Proposition}
\newtheorem{corollary}[theorem]{Corollary}
\newcommand*{\sheafhom}{\mathcal{H}\kern -.5pt om}
\newcommand*{\sheafext}{\mathcal{E}\kern -.5pt xt}
\newcommand*{\sheafend}{\mathcal{E}\kern -.5pt nd}
\theoremstyle{definition}
\newtheorem{definition}[theorem]{Definition}
\newtheorem{example}[theorem]{Example}
\theoremstyle{remark}
\newtheorem{remark}[theorem]{Remark}
\newtheorem{notation}[theorem]{Notation}
\numberwithin{equation}{section}
\newcommand{\Supp}{\text{\rm Supp}}
\newcommand{\Gal}{\text{\rm Gal}}
\newcommand{\Div}{\text{\rm Div}}
\newcommand{\Vect}{\text{\rm Vect}}
\newcommand{\Spec}{\text{\rm Spec}}
\DeclareMathAlphabet{\mathpzc}{OT1}{pzc}{m}{it}
 \DeclareMathOperator*{\Hom}{Hom}
\DeclareMathOperator{\sheafHom}{\mathscr{H}\text{\kern -3pt {\calligra\large om}}\,}
\begin{document}

\title{On the Covers of Orbifold Curves Preserving the Slope Stability under Pullback}
\author{Soumyadip Das}
\address{Mathematics,
Indian Institute of Technology Jammu, Jagti, NH 44, Jammu 181221, Jammu and Kashmir, India.}
\email{soumyadip.das@iitjammu.ac.in}
\subjclass[2020]{14A20, 14E20, 14H60, 14H30 (Primary) ; 14F06, 14D22 (Secondary)}

\keywords{Orbifold Curve, Covers of Stacky curves, Orbifold Bundle, Orbifold Slope Stability, Genuinely Ramified Covers, Etale Fundamental Group}

\begin{abstract}
We completely characterize the covers of connected orbifold curves which preserve slope stability of vector bundles under the pullback morphism. More precisely, given a cover $f \colon (Y,Q) \longrightarrow (X,P)$ of connected orbifold curves, we show that the maximal destabilizing sub-bundle of the pushforward sheaf $f_*\mathcal{O}_{(Y,Q)}$ defines the maximal \'{e}tale sub-cover of $f$. The cover $f$ is said to be genuinely ramified if $f$ does not factor through any non-trivial \'{e}tale sub-cover. Our main result states that the class of covers $f$ that preserves the stable bundles under a pullback are precisely the class of genuinely ramified covers $f$. Further, we establish equivalent conditions for the cover $f$ to be genuinely ramified, generalizing earlier works on covers of curves. We thoroughly study the slope stability conditions of bundles on an orbifold curve, their properties under the pushforward and pullback maps under covers with a stand point of Deligne-Mumford stacks, hence giving a solid foundation of the subject. As a consequence, we also answer the question of descent of stable bundles under genuinely ramified covers.
\end{abstract}

\maketitle

\section{Introduction}
Throughout this article, we work over an algebraically closed field $k$ of an arbitrary characteristic. We are interested in defining the slope stability conditions of vector bundles on an \textit{orbifold curve}, i.e. a one dimensional smooth proper Deligne-Mumford stack that is generically an integral curve. Further, we want to study the nature of the pullback and pushforward of a bundle under a finite surjective generically separable morphism, henceforth referred to as a \textit{cover} of orbifold curves.

First, consider the classical context of a cover $f \colon Y \longrightarrow X$ of smooth projective connected $k$-curves. The cover $f$ induces a homomorphism $f_* \colon \pi_1(Y) \longrightarrow \pi_1(X)$ of the \'{e}tale fundamental groups. The image of this homomorphism is a sub-group of $\pi_1(X)$ of finite index. This defines the (unique) maximal \'{e}tale sub-cover $Y' \longrightarrow X$ via which $f$ factors. One aspect of \cite{BP} is to capture this maximal \'{e}tale sub-cover using the maximal destabilizing sub-bundle of the pushforward bundle $f_*\mathcal{O}_Y$ of the structure sheaf $\mathcal{O}_Y$. The main result of loc. cit. says that the class of covers $f$ such that the pullback bundle $f^*E$ of a stable bundle $E$ remains stable is exactly the class of \textit{genuinely ramified covers} $f$, i.e. the covers $f$ which do not contain any non-trivial \'{e}tale sub-cover. In other words, these are the covers $f$ such that the induced homomorphism $f_*$ on the \'{e}tale fundamental groups is a surjection. These results are further generalized to higher dimensional normal varieties in \cite{BDP}, and studied in the context of certain formal orbifold curves in \cite{BKP} and in \cite{BKP2}.

In this paper, our objects of interest are the orbifold curves. By a folklore result, a connected orbifold curve is equivalent to a formal orbifold curve $(X,P)$ which consists of a smooth projective connected $k$-curve $X$ and a data $P$ of certain finite Galois field extensions associated finitely many closed points on $X$; see Definition~\ref{def_f_o_c} and Theorem~\ref{thm_equiv}. We should mention that there is a little gap in the existing literature that connects the covers of formal orbifold curves with the stacky notion of covers of orbifold curves (the later are affine morphisms that are necessarily representable in the sense of \cite[Section~7.2, page 155]{Olsson}); we adapt Definition~\ref{def_stacky_curve} for the covers of orbifold curves which are not necessarily representable morphisms; also see Remark~\ref{rmk_deg_cover}. In this regard, we need to identify the representable covers. We show in Lemma~\ref{lem_rep_cover} that the representable covers are precisely the ones induced by the pullback branch data $f_0^*P$; see Remark~\ref{rmk_pullback_of_branch_data} for the definition. The representable covers play a crucial role in our article. Since we work over a base field $k$ of an arbitrary characteristic, it is eminent to define and study the properties of the $P$-degree, $P$-slope and $P$-(semi/poly)stability of a bundle on an orbifold curve $(X,P)$, and then relate them to the existing theory of $\text{\rm char}(k) = 0$-case as well as of the equivariant notions; see Section~\Cref{sec_degree} and Section~\Cref{sec_stability_f_o_c}.

Using these results, we relate the maximal \'{e}tale sub-cover of a cover $f \colon (Y,Q) \longrightarrow (X,P)$ with the cover associated to the maximal destabilizing bundle of the vector bundle $f_*\mathcal{O}_{(Y,Q)}$, generalizing the results from \cite{BP}. More precisely, we have the following result.

\begin{theorem}[{Lemma~\ref{lem_algebra}, Proposition~\ref{prop_maximal_etal_subcover}, Remark~\ref{rmk_factorization_discussion}}]\label{thm_intro_1}
Let $f \colon (Y,Q) \longrightarrow (X,P)$ be a cover of connected orbifold curves. This induces a cover $f_0 \colon Y \longrightarrow X$ of smooth projective connected curves. The maximal destabilizing sub-bundle $V$ of $f_*\mathcal{O}_{(Y,Q)}$ is a $P$-semistable vector bundle of $P$-degree $0$, and the natural inclusion $\mathcal{O}_{(X,P)} \hookrightarrow V$ of vector bundles on $(X,P)$ equip $V$ with a structure of $\mathcal{O}_{(X,P)}$-algebras. Further, there is a maximal sub-cover $g_0 \colon \hat{X} \longrightarrow X$ of $f_0$ such that the induced cover $g \colon (\hat{X},g_0^*P) \longrightarrow (X,P)$ is the maximal \'{e}tale sub-cover of $f$, and $(\hat{X},g_0^*P) = \underline{\text{\rm Spec}}(V)$. The cover $f$ factors as a composition
$$ (Y,Q) \overset{\hat{g}}\longrightarrow (\hat{X}, g_0^*P) \overset{g}\longrightarrow (X,P),$$
and the homomorphism $\hat{g}_* \colon \pi_1(Y,Q) \rightarrow \pi_1(\hat{X},g_0^*P)$ of the \'{e}tale fundamental groups induced by $\hat{g}$ is a surjection.
\end{theorem}

As in the case of curves, we say that a non-trivial cover of connected orbifold curves is \textit{genuinely ramified} if it satisfies one of the following equivalent conditions; see \cite[Lemma~2.4, Proposition~2.5, Lemma~3.1]{BP} for smooth curves and \cite[Theorem~2.4]{BDP} for normal varieties.

\begin{proposition}[{Proposition~\ref{prop_gen_ram_equivalences}}]\label{prop_intro_1}
Let $f \colon (Y,Q) \longrightarrow (X,P)$ be a cover of connected orbifold curves. The following are equivalent.
\begin{enumerate}[leftmargin=*]
\item The maximal destabilizing sub-bundle $V$ of $f_* \mathcal{O}_{(Y,Q)}$ coincides with $\mathcal{O}_{(X,P)}$.
\item The cover $f$ does not factor through any non-trivial \'{e}tale sub-cover.
\item The homomorphism between \'{e}tale fundamental groups $f_* \colon \pi_1(Y,Q) \longrightarrow \pi_1(X,P)$ induced by $f$ (cf. \cite[Proposition~2.26]{KP}) is a surjection.
\item For any \'{e}tale cover $(Z,R) \longrightarrow (X,P)$ of connected orbifold curves, the fiber product stacky curve $(Y,Q) \times_{(X,P)} (Z,R)$ is connected.
\item The fiber product Deligne-Mumford stack $(Y,Q) \times_{(X,P)} (Y,Q)$ is connected.
\item $\text{\rm dim} \, H^0( (Y,Q) , f^* f_* \mathcal{O}_{(Y,Q)}) = 1$.
\end{enumerate}
Finally, the above conditions imply that the cover $f_0 \colon Y \longrightarrow X$ induced on the Coarse moduli curves is genuinely ramified.
\end{proposition}

Next, we provide a necessary and sufficient condition for a stable bundle to be a pullback bundle under a genuinely ramified cover.

\begin{theorem}[{\Cref{thm_descent}}]
Let $f \colon (Y,Q) \longrightarrow (X,P)$ be a genuinely cover of connected orbifold curves. This induces a cover $f_0 \colon Y \longrightarrow X$ of the underlying Coarse moduli curves. Let $E \in \Vect(Y,Q)$ be a $Q$-stable bundle. Then there exists a bundle $F \in \Vect(X,P)$ such that $E \cong f^*F$ if and only if $f_*E$ contains a $P$-stable sub-bundle $W$ satisfying $\mu_P(W) = \frac{\mu_Q(E)}{\text{\rm deg}(f_0)}$.
\end{theorem}

Our final result gives the complete characterization of covers $f \colon (Y,Q) \longrightarrow (X,P)$ of connected orbifold curves preserving slope stability conditions. Namely, we show that a cover of orbifold curves preserves the slope stability if and only if it is a genuinely ramified cover.

\begin{theorem}[{Theorem~\ref{thm_main}, Theorem~\ref{thm_converse}}]\label{thm_intro}
Let $f \colon (Y,Q) \longrightarrow (X,P)$ be a non-trivial cover of connected orbifold curves. The cover $f$ is genuinely ramified if and only if the pullback $f^*E$ of any $P$-stable bundle $E \in \Vect(X,P)$ is $Q$-stable.
\end{theorem}

The idea behind the proof of the forward direction is as follows. We show that for a representable genuinely ramified Galois cover $f$, the bundle $f^* \left( f_* \mathcal{O}_{(Y,f_0^*P)}/\mathcal{O}_{(X,P)} \right)$ admits a filtration by sub-bundles such that each successive quotient is a line bundle of negative $Q$-degree; see Proposition~\ref{prop_negative}. This can be used to show that the pullback of a $P$-stable bundle is $Q$-polystable and simple, and hence $f_0^*P$-stable (see Theorem~\ref{thm_Galois_main}). For the general case, we need to consider the Galois closure $\bar{f}_0 \colon \bar{Y} \longrightarrow X$ of $f_0$ and a certain geometric branch data $\bar{Q}$ on $\bar{Y}$. The induced cover $\bar{f} \colon (\bar{Y}, \bar{Q}) \longrightarrow (X,P)$ may not be genuinely ramified. Our approach is to use intricate techniques for the genuinely ramified part and the maximal \'{e}tale part of the cover $\bar{f}$ to establish the result; see Theorem~\ref{thm_main}.

For the converse, when $f$ is a non-trivial \'{e}tale cover, we explicitly construct a $P$-stable bundle $E$ such that $f^*E$ is not $f_0^*P$-stable. For this, the main step is to show the following: for any non-trivial finite group $G$, and a $G$-Galois cover $Z \longrightarrow X$ of smooth projective connected $k$-curves, there is a $G$-stable bundle on $Z$ which is not $H$-stable for any proper subgroup $H$ of $G$, together with the induced $H$-action. This also produces an alternate proof of \cite[Proposition~5.1]{BP}.

We mention that the slope stability of bundles on orbifold curve has been studied in \cite{BKP} using an equivariant set up and equivariant slope, and in \cite{BKP2} using parabolic bundles when $\text{\rm char}(k)=0$; see Remark~\ref{rmk_condition} for a comparison with the above result. We refrain from using the notion of parabolic bundles since an orbifold bundle is non uniquely determined by a parabolic structure in positive characteristic; this can be seen for example by noting that there are non-isomorphic line bundles on an Artin-Schreier stack producing isomorphic parabolic structure on the underlying line bundle on $\mathbb{P}^1$. This suggests that using orbifold curves and orbifold bundles in the stacky sense is a more natural and canonical way for studying the slope stability conditions.

The present article also acts as the foundation for our future work on the construction of the moduli space of the semistable orbifold bundles of a given rank and determinant bundle. We have also given a detailed account of the definition and properties of degree and slope of an orbifold bundles together with a study on their behavior under covers (see Lemma~\ref{lem_pullback_div_line_bundles}, Lemma~\ref{lem_push_div_line_rep}, Lemma~\ref{lem_push_div_line_nonrep}, Proposition~\ref{prop_properties_stacky}), for which we have not found any suitable source in the literature.

The structure of the paper is as follows. After introducing the notation and the conventions in Section~\Cref{sec_Not}, in Section~\Cref{sec_stacky_curves}, we recall the necessary definition for the objects of our interest. Section~\Cref{sec_degree} and Section~\Cref{sec_slope_stability} are devoted to the formalizing the definition and properties of degree and slope stability for vector bundles on an orbifold curve. Section~\Cref{sec_pushforward} studies the maximal \'{e}tale sub-covers of covers of orbifold curves. The equivalent conditions in Proposition~\ref{prop_intro_1} are proved in Section~\Cref{sec_equivalence_gen_ram}. In Section~\Cref{sec_main-direction}, we prove the characterization of the genuinely ramified maps as the stability preserving morphisms. \Cref{sec_equiv} is added to show a categorical equivalence between the orbifold curves and the formal orbifold curves, which further give an equivalence between the categories of bundles on them.

\section*{Acknowledgement}
I would like to thank Snehajit Misra for valuable discussions. I am indebted to Indranil Biswas, Manish Kumar, Souradeep Majumder and A. J. Parameswaran. I would also like to thank an anonymous referee for minutely reading an earlier version of the article and pointing our the subtle errors.

This work was completed while the author was an NBHM Post-doctoral Fellow.

\section{Preliminaries, Notation and Conventions}\label{sec_Not}
\subsection{Generalities}
Throughout this article, we work over an algebraically closed field $k$ of an arbitrary characteristic. In this paper, the curves we consider are reduced $k$-curves. For any $k$-scheme $W$, and a closed point $w \in W$, we denote the completion of the local ring $\mathcal{O}_{W,w}$ of $W$ at $w$ by $\widehat{\mathcal{O}}_{W,w}$. When this complete local ring is a domain, we set $K_{W,w}$ as the quotient field $\text{\rm QF}(\widehat{\mathcal{O}}_{W,w})$.

A \textit{cover} $f \colon Y \longrightarrow X$ of $k$-curves refers to a finite surjective morphism $f$ that is generically separable. For a finite group $G$, a $G$-\textit{Galois cover} $Y \longrightarrow X$ is a cover together with a $G$-action on $Y$ such that $G$ acts simply transitively on each generic geometric fiber. Any cover (in particular, a Galois cover) is \'{e}tale away from a set $B$ of finitely many points on the base curve $X$, which may be empty. The set $B$ is called the \textit{branch locus} of $f$.

For a $G$-Galois cover $f \colon Y \longrightarrow X$ of smooth projective connected $k$-curves, the group $G$ acts transitively on the fiber $f^{-1}(x) \subset Y$ for each closed point $x \in X$; the stabilizer groups at the points in $f^{-1}(x)$ are conjugate to each other in $G$. We define the \textit{inertia group} at a closed point $x \in X$ to be a stabilizer group $\text{\rm Stab}_G(y)$ for some $y \in f^{-1}(x)$. This is well defined as a subgroup of $G$ up to conjugations, hence well defined as an abstract group. In particular, the cover $f$ is \'{e}tale above $x \in X$ if and only if the inertia group at $x$ is the trivial group. When the order of the inertia group at $x$ is invertible in $k$, we say that the cover $f$ is \textit{tamely ramified} over the point $x$.

\subsection{Deligne-Mumford Stacks}
For the definition and properties of a Deligne-Mumford stack (a DM stack), we refer to \cite{Olsson}, \cite{DM} and \cite[Appendix A]{V}. Our interest is on the smooth proper one-dimensional DM stacks that are generically schematic.

In whatever follows, we adapt the following conventions.
\begin{itemize}[leftmargin=*]
\item We only consider DM stacks that are \emph{separated and of finite type} over $k$.
\item A \textit{representable morphism} $\mathfrak{Y} \longrightarrow \mathfrak{X}$ of DM stacks in this article is a morphism representable by a scheme, i.e. for any scheme $Z$ and a morphism $Z \longrightarrow \mathfrak{X}$ of stacks, the fiber product $\mathfrak{Y} \times_{\mathfrak{X}} Z$ is a scheme.
\end{itemize}

For any DM stack $\mathfrak{X}$, the diagonal morphism $\Delta_{\mathfrak{X}} \colon \mathfrak{X} \longrightarrow \mathfrak{X} \times_{\Spec(k)} \mathfrak{X}$ is a representable unramified morphism (see \cite[Proposition~7.15]{V}, \cite[Theorem~8.3.3]{Olsson}). The separatedness assumption on $\mathfrak{X}$ means that for any morphism $Y \longrightarrow \mathfrak{X} \times_{\Spec(k)} \mathfrak{X}$ of stacks where $Y$ is a scheme, the base change morphism $\mathfrak{X} \times_{\mathfrak{X} \times_{\Spec(k)} \mathfrak{X}} Y \longrightarrow Y$ of schemes is a proper (equivalently, finite) morphism. Further, by the definition of a DM stack, there exists an \'{e}tale surjective morphism $Z \longrightarrow \mathfrak{X}$ from a scheme $Z$ (the morphism $Z \longrightarrow \mathfrak{X}$ is called an \textit{atlas} of $\mathfrak{X}$). We say that $\mathfrak{X}$ is \textit{smooth} if there exists an atlas $Z \longrightarrow \mathfrak{X}$ where $Z$ is a smooth scheme (equivalently, for every atlas $Z' \longrightarrow \mathfrak{X}$, \, $Z'$ is a smooth scheme; \cite[Section~4, pg. 100]{DM}). We adapt \cite[Section~4, following Definition~4.10]{DM} for the definition of the properties of morphisms of DM stacks.

Every DM stack admits a Coarse moduli space by an algebraic space; see \cite[Definition~11.1.1 and Theorem~1.11.2]{Olsson}. For our purpose, we use the following definition of a Coarse moduli space.

\begin{definition}\label{def_KM}
A $k$-scheme $X$ is a \textit{Coarse moduli space} for a DM stack $\mathfrak{X}$ if there is a morphism $\pi \colon \mathfrak{X} \longrightarrow X$ satisfying the following conditions.
\begin{enumerate}
\item The morphism $\pi$ is initial among all morphisms from $\mathfrak{X}$ to $k$-schemes.\label{KM_1}
\item $\pi$ induces a bijective correspondence between the set of $k$-points of $X$ and the set $|\mathfrak{X}(k)|$ of isomorphism classes of $k$-points of $\mathfrak{X}$.\label{KM_2}
\item $X$ is separated and of finite type over $k$.\label{KM_3}
\item $\pi$ is a proper morphism of stacks, and $\pi_* \mathcal{O}_{\mathfrak{X}} = \mathcal{O}_X$.\label{KM_4}
\item (\cite[Theorem~11.3.6]{Olsson}) For any morphism $h \colon X' \longrightarrow X$ of $k$-schemes, the Coarse moduli scheme of the fiber product DM stack $X' \times_X \mathfrak{X}$ is universally homeomorphic to $X'$, and it is an isomorphic if $h$ is a flat morphism.\label{KM_5}
\end{enumerate}
\end{definition}

The notion of connectedness and irreducibility are well established for a DM stack $\mathfrak{X}$; see \cite[Section~4]{DM}. In particular, $\mathfrak{X}$ is a disjoint union of its connected components, and each connected component is a union of its irreducible components (\cite[Proposition~4.13, Proposition~4.15]{DM}) in a unique way. It can be seen from \cite[Proposition~4.14]{DM} that the DM stack $\mathfrak{X}$ is connected if and only if its Coarse moduli space $X$ is connected.

Let $\mathfrak{X}$ be a DM stack admitting a Coarse moduli scheme $X$. For any geometric point $x \in \mathfrak{X}(k)$, the fiber product $\mathfrak{X} \times_{\Delta_{\mathfrak{X}}, \mathfrak{X} \times_k \mathfrak{X}, (x,x)} \Spec(k) = \underline{\text{\rm Isom}}(x,x)$ is a constant group scheme over $\text{\rm Spec}(k)$ associated to a finite group $G_x$, called the \textit{stabilizer group} at $x$; as $x$ varies in its isomorphism class in $|\mathfrak{X}(k)| \cong X(k)$, the stabilizer groups are conjugate to each other. Thus, upto a canonical isomorphism, the stabilizer group $G_x$ at a point $x \in |\mathfrak{X}(k)|\cong X(k)$ is well defined as an abstract group. It follows that $G_x$ is the trivial group if and only if the image of $x$ in $\mathfrak{X}$ lies in a sub-scheme of $\mathfrak{X}$. A point $x \in \mathfrak{X}(k)$ is called a \textit{stacky point} if the stabilizer group $G_x$ is non-trivial.

\begin{example}\label{eg_quotient_stack}
One important example of a DM stack is a quotient stack (see \cite[Example~4.8]{DM} or \cite[Example~8.1.12]{Olsson}). Let $G$ be a finite group acting on a quasi-projective $k$-variety $Y$ such that the $G$-actions on the generic points are faithful. The quotient stack $[Y/G]$ is a DM stack admitting the $k$-variety $X \coloneqq Y/G$ as its Coarse moduli space, and the canonical morphism $Y \longrightarrow [Y/G]$ is an atlas. In particular, $[Y/G]$ is a smooth (respectively, proper) DM stack if and only if $Y$ is smooth (respectively, proper). Moreover, $Y \longrightarrow [Y/G]$ is a principal homogeneous space over $[Y/G]$, and the stack $[Y/G]$ is representable if and only if the map $Y \longrightarrow X$ of varieties is an \'{e}tale morphism. The isomorphism classes of the $k$-points of $[Y/G]$ and the closed points of $X$ are both canonically identified with the $G$-orbits of the closed points of $Y$.
\end{example}

\subsection{Stacky and Orbifold Curves}\label{sec_stacky_curves}
We are interested in the properties of morphisms between smooth proper connected DM stacks which are generically smooth $k$-curves. To study such morphisms, one encounters DM stacks that are not necessarily smooth. This leads us to make the following definitions.

\begin{definition}[{Stacky Curve}]\label{def_stacky_curve}
A connected DM stack $\mathfrak{X}$ over $k$ is said to be a \textit{stacky curve} if it satisfies the following properties.
\begin{enumerate}
\item $\mathfrak{X}$ admits a smooth irreducible $k$-curve $X$ as its Coarse moduli space (cf. Definition~\ref{def_KM}).
\item Every irreducible component of $\mathfrak{X}$ is one-dimensional and is generically isomorphic to $X$.
\end{enumerate}

A morphism $f \colon \mathfrak{Y} \longrightarrow \mathfrak{X}$ of stacky curves induce a morphism $f_0 \colon Y \longrightarrow X$ of their Coarse moduli curves; namely, the composite morphism $\mathfrak{Y} \overset{f}\longrightarrow \mathfrak{X} \longrightarrow X$ factors uniquely as the composition $\mathfrak{Y} \longrightarrow Y \overset{f_0} \longrightarrow X$ by Definition~\ref{def_KM}. We say that $f$ is a \textit{cover} of stacky curves if $f$ is a finite surjective morphism, and $f_0$ is a cover of curves. \hfill\qed
\end{definition}

Let $\mathfrak{X}$ be a stacky curve. For any $x \in \mathfrak{X}(k)$, the stabilizer group $G_x$ is trivial if and only if the image of $x$ lies in the maximal open sub-curve of $\mathfrak{X}$. Since $\mathfrak{X}$ is generically schematic, there are only finitely many stacky points in $\mathfrak{X}$.

\begin{definition}[{Orbifold Curve}]\label{def_orbi_curve}
A stacky curve $\mathfrak{X}$ is said to be an \textit{orbifold curve} if it is smooth (i.e. one, and hence every atlas of $\mathfrak{X}$ is a smooth $k$-curve).
\end{definition}

When $Y \longrightarrow X$ is a $G$-Galois cover of smooth $k$-curves for some finite group $G$ and $X$ is connected, the quotient stack $[Y/G]$ in Example~\ref{eg_quotient_stack} is an orbifold curve. It should be noted that stacky curves and orbifold curves are considered the same in some literature (e.g. in \cite{VZB}); but for our context, we distinguish them: \emph{an orbifold curve is a smooth stacky curve}. The following is a typical case in our work which needs this distinction.

\begin{example}\label{eq_non-smooth}
A stacky curve in our definition need not be smooth although its Coarse moduli curve is smooth. Take the example of two distinct lines in $\mathbb{A}^2$ intersecting at a point and $\mathbb{Z}/2$ acting on this union of lines via interchanging points. The corresponding quotient stacky curve is not smooth as it admits an atlas from the union of the above lines which is not smooth, but the Coarse moduli curve $\mathbb{A}^1$ is smooth.
\end{example}

We make the following remarks in regard to the above definitions.

\begin{remark}\label{rmk_deg_cover}
A cover of a stacky curve in Definition~\ref{def_stacky_curve} need not be a representable morphism: for any stacky curve $\mathfrak{X}$, the Coarse moduli map $\iota \colon \mathfrak{X} \longrightarrow X$ is a cover in the sense of Definition~\ref{def_stacky_curve} that is representable if and only if $\iota$ defines an $k$-isomorphism $\mathfrak{X} \cong X$.

Let $f \colon \mathfrak{Y} \longrightarrow \mathfrak{X}$ be a cover of stacky curves. In particular, $f$ is a finite morphism which may not be representable. It is defined analogous to the definition of a proper morphism in \cite[Definition~4.11]{DM}. So a cover is dominated by a cover $Z \longrightarrow \mathfrak{X}$ from a $k$-curve $Z$, and $Z \longrightarrow \mathfrak{Y}$ is again a cover. Since any stacky curve has a representable diagonal by our hypothesis, any morphism from a $k$-scheme to a stacky curve is representable.
\end{remark}

\begin{remark}\label{rmk_cover_curves}
We point out that a cover $f \colon \mathfrak{Y} \longrightarrow X$ of stacky curves with $X$ a $k$-curve is representable if and only if $\mathfrak{Y}$ is a $k$-curve. This is because $f$ factors as a composition
$$\mathfrak{Y} \longrightarrow Y \overset{f_0}\longrightarrow X$$
of covers where $Y$ is the Coarse moduli curve of $\mathfrak{Y}$, and the cover $\mathfrak{Y} \longrightarrow Y$ is representable if and only if $\mathfrak{Y} \cong Y$. If $f$ is also an \'{e}tale cover, then $f$ must be representable, and $\mathfrak{Y}$ is a $k$-curve. 
\end{remark}

A stacky curve $\mathfrak{X}$ is \'{e}tale locally a quotient stack (\cite[Theorem~11.3.1]{Olsson}). When $\mathfrak{X}$ is an orbifold curve, this local structure produces a finite data of certain Galois field extensions associated to finitely many closed points of the Coarse moduli curve $X$; the curve $X$ together with this finite data is called a formal orbifold curve (introduced in \cite{P} when $k = \mathbb{C}$; generalized over algebraically closed fields of arbitrary characteristic in \cite{KP}). By Theorem~\ref{thm_equiv}, the assignment of a formal orbifold curve to an orbifold curve is an equivalence of categories.

\begin{definition}[{\cite[Definition~2.1]{KP}}]\label{def_f_o_c}
A \textit{formal orbifold curve} is a pair $(X,P)$ where $X$ is a smooth $k$-curve and $P$ is a \textit{branch data}, i.e. a function that to every closed point $x \in X$ associates a finite Galois extension $P(x)$ of $K_{X,x} = \text{\rm QF}(\widehat{\mathcal{O}}_{X,x})$ (in some fixed separable algebraic closure of $K_{X,x}$) such that the set
$$\text{\rm BL}(P) \coloneqq \left\{ x \in X \hspace{.2cm} | \hspace{.2cm} P(x) \text{ is a nontrivial extension of } K_{X,x}  \right\},$$
called the \textit{branch locus} of $P$, is a finite set of closed points of $X$. A formal orbifold curve $(X,P)$ is said to be \textit{connected} (respectively, \textit{projective}) if the $k$-curve $X$ is connected (respectively, projective).
\end{definition}

Thus, a formal orbifold curve is a smooth curve $X$ together with the data of a finite set $B = \Supp(P)$ (which may be empty) of closed points in $X$, and for each $x \in B$, a finite Galois extension $P(x)/K_{X,x}$. For two branch data $P$ and $Q$ on a smooth projective curve $X$, we write $Q \geq P$ if $P(x) \subset Q(x)$ as extensions of $K_{X,x}$ for every closed point $x \in X$. Recall that when $e_x \coloneqq |\Gal\left( P(x)/K_{X,x} \right)|$ is invertible in $k$ for a closed point $x \in X$, we have $P(x) = K_{X,x}(a^{1/e_x})$, where $a$ is a uniformizer of $\widehat{\mathcal{O}}_{X,x}$, and hence the Galois extension $P(x)/K_{X,x}$ is uniquely determined by the degree $e_x$. Thus if $|\Gal\left( P(x)/K_{X,x} \right)|$ is invertible in $k$ for each closed point $x \in X$, the orbifold curve $(X,P)$ is completely determined by $X$ together with finitely many points, and a positive integer (that is invertible in $k$) attached to each of these points. We recall the following definitions from \cite{KP}.

\begin{definition}[{\cite[Definition~2.6]{KP}}]\label{def_foc_maps}
Let $X$ and $Y$ be two smooth $k$-curves with branch data $P$ and $Q$ on them, respectively.
\begin{enumerate}
\item A \textit{cover} $f \colon (Y,Q) \longrightarrow (X,P)$ of formal orbifold curves is a cover $f_0 \colon Y \longrightarrow X$ such that $P(f_0(y)) \subset Q(y)$ as extensions of $K_{X,f_0(y)} = \text{\rm QF}(\widehat{\mathcal{O}}_{X,f_0(y)})$ for each closed point $y \in Y$.
\item A cover $f \colon (Y,Q) \longrightarrow (X,P)$ of formal orbifold curves is said to be \textit{\'{e}tale} if $Q(y) = P(f_0(y))$ as extensions of $K_{X,f_0(y)}$ for each closed point $y \in Y$.
\item A cover $f_0 \colon Y \longrightarrow X$ of smooth curves is said to be \textit{essentially \'{e}tale} if $K_{Y,y} \subset P(f_0(x))$ as extensions of $K_{X,f_0(y)}$ for each closed point $y \in Y$.
\end{enumerate}
\end{definition}

\emph{In view of the equivalence of the categories in Theorem~\ref{thm_equiv}, we drop the adjective `formal', and simply write: $(X,P)$ is an orbifold curve. Whether we are using the local data on $(X,P)$ or the associate stacky curve, it will be clear from the context.}

\emph{In what follows, we will work over proper orbifold curves. Further, given a cover $f$ of orbifold curves, we always denote the induced cover of the Coarse moduli curves by $f_0$.}

We need the following useful observations from \cite{KP}.

\begin{remark}[{Pullback of a branch data}]\label{rmk_pullback_of_branch_data}
Given a cover $f_0 \colon Y \longrightarrow X$ of smooth projective curves, and a branch data $P$ on $X$, we can define the pullback branch data $f_0^*P$ on $Y$ as in \cite[Definition~2.5]{KP}: for any closed point $y \in Y$, the field $f_0^*P(y)$ is the compositum $P(f_0(y)) \cdot K_{Y,y}$ as an Galois extension of $K_{X,x}$. The cover $f_0$ induces a cover $f \colon (Y, f_0^*P) \longrightarrow (X,P)$ of orbifold curves in a natural way.

By \cite[Lemma~2.12]{KP}, there is a cover $f \colon (Y,Q) \longrightarrow (X,P)$ of orbifold curves if and only if $Q \geq f_0^*P$. Moreover, the induced cover $f \colon (Y, f_0^*P) \longrightarrow (X,P)$ of orbifold curves is \'{e}tale if and only if $f_0$ is an essentially \'{e}tale cover of $(X,P)$. So in general, a cover $f' \colon (Y,Q) \longrightarrow (X,P)$ of orbifold curves factors uniquely as a composition
$$f' \colon (Y,Q) \overset{j}\longrightarrow (Y, f_0^*P) \overset{f}\longrightarrow (X,P)$$
of covers of orbifold curves. \hfill{\qed}
\end{remark}

The orbifold curves which are quotient stacks deserve a distinction.

\begin{definition}[{\cite[Definition~2.28, Remark~2.7]{KP}}]\label{def_geometric}
A connected orbifold curve $(X,P)$ is said to be \textit{geometric} if there exists a Galois cover $Z \longrightarrow X$ of smooth projective connected $k$-curves for a finite group $G$, and $(X,P) = [Z/G]$. In this case, $P$ is called a \textit{geometric branch} data on $X$.
\end{definition}

Let us note some useful results in this regard.

\begin{remark}\label{rmk_geometric_results}
Let $(X,P)$ be a connected orbifold curve. The following hold.
\begin{enumerate}[leftmargin = *]
\item By the proof of \cite[Proposition~2.37]{KP}, there is a geometric branch data $Q$ on $X$ satisfying $Q \geq P$.\label{geo:1}
\item By \cite[Proposition~2.30]{KP}, there is a maximal geometric branch data $Q$ such that $P \geq Q$. It follows that any essentially \'{e}tale cover $Y \longrightarrow X$ of $(X,P)$ is also an essentially \'{e}tale cover of $(X,Q)$.\label{geo:2}
\end{enumerate}
\end{remark}

We also have an explicit description of an atlas of $(X,P)$ (see \cite[Proposition~2.30]{KP} for a higher dimensional analogue).

\begin{lemma}\label{lem_atlas}
Let $(X,P)$ be a connected orbifold curve. Suppose that $\text{\rm BL}(P) = \{x_1, \ldots, x_r\}$, $r \geq 1$. There is a Zariski open covering $\{U_i\}_{0 \leq i \leq r}$ of $X$ such that the following hold.
\begin{enumerate}[leftmargin = *]
\item $U_0 = X - \text{\rm BL}(P)$ is the maximal open sub-curve of $(X,P)$,
\item $x_i \in U_i$, and $x_j \not\in U_i$ for any $i \neq j$,
\item for each $1 \leq i \leq r$, there is a $G_i \coloneqq \text{\rm Gal}\left(P(x_i)/K_{X,x_i}\right)$-Galois cover $f_i \colon V_i \longrightarrow U_i$ of smooth irreducible affine $k$-curves, \'{e}tale away from $x_i$, such that the following hold.
\begin{enumerate}
\item The cover $f_i$ is totally ramified over $x_i$, i.e. $f_i^{-1}(x_i) = \{v_i\}$, and $K_{V_i,v_i} \cong P(x_i)$ as $G_i$-Galois extensions of the local field $K_{U_i,x_i}$.
\item $U_i \times_X (X,P) \cong (U_i, P|_{U_i}) \cong [V_i/G_i]$ where $P|_{U_i}$ denote the restriction of the branch data $P$ on $U_i$.
\end{enumerate} 
\end{enumerate}
This produces a natural atlas $\sqcup_{0 \leq i \leq r} V_i \longrightarrow (X,P)$ with $V_0 = U_0$.
\end{lemma}

\begin{proof}
One can use the formal patching techniques of Harbater as in \cite[Theorem~4.3]{Das} to obtain the result. We provide an alternate proof.

Let $1 \leq i \leq r$. There is a natural number $d_0$ such that for any $d \geq d_0$, the complete linear system $\mathbb{P}^N = |dx_i|$ is very ample, and $N \geq 3$. Fix a $d_i \geq d_0$ that is coprime to $|G_i|$. Let $H_i$ be the hyperplane in the linear system $|d_ix_i|$ defined by $d_ix_i = 0$. Then $H_i \cap X = \{x_i\}$. Since we work over an infinite field $k$, we can choose a $H'_i \cong \mathbb{P}^1 \subset H_i$ such that $H'_i \cap X = \emptyset$. Then the projection map $p_i \colon |d_i x_i| - H'_i \longrightarrow \mathbb{P}^1$ restricts to a proper separable morphism, and hence a cover $g_i \colon X \longrightarrow \mathbb{P}^1$ of degree $d_i$ such that $g_i^{-1}(\infty) = \{x_i\}$. The field extension $K_{X,x_i}/K_{\mathbb{P}^1,\infty}$ is an extension of degree $d_i$.

Identifying the local field $K_{X,x_i}$ with $k((t^{-1})) \cong K_{\mathbb{P}^1, \infty}$ by the choice of a local parameter at $x_i$, we view the extension $P(x_i)/K_{X,x_i}$ as $P(x_i)/k((t^{-1}))$. Now by \cite[Main Theorem~1.4.1]{Katz}, there is a $G_i$-Galois cover $Z_i \longrightarrow \mathbb{P}^1$ that is \'{e}tale away from $\{0, \infty\}$, tamely ramified over $0$, and there is a unique point $z_i \in Z_i$ over $\infty$ such that $K_{Z,z_i} \cong P(x_i)$ as extensions of $k((t^{-1})) \cong K_{\mathbb{P}^1,\infty}$. Since $d_i$ was chosen to be coprime to $|G_i|$, the field extension $K_{X,x_i}/K_{\mathbb{P}^1,\infty}$ obtained from the cover $g_i$ is linearly disjoint to the extension $K_{Z,z_i}/K_{\mathbb{P}^1,\infty}$. Let $W_i$ be the connected component in the normalization of $X \times_{g_i,\mathbb{P}^1} Z_i$ containing the point $(x_i,z_i)$. Then the projection $f_i \colon W_i \longrightarrow X$ is a $G_i$-Galois cover, and there is a unique point $w_i \in f_i^{-1}(x_i)$ such that $K_{W_i,w_i} \cong P(x_i)$ as extensions over $K_{X,x_i}$.

For each $1 \leq i \leq r$, let $U_i$ be the open sub-curve of $X$ obtained by removing the branched points of $f_i$ other than $x_i$, as well as removing $\text{\rm BL}(P) - \{x_i\}$. The covers $f_i \colon V_i \coloneqq h_i^{-1}(U_i) \longrightarrow U_i$ have the stated properties.
\end{proof}

We conclude this section with the following remarks on the \'{e}tale fundamental groups and vector bundles on orbifold curves.

\begin{remark}[{The \'{e}tale fundamental group}]\label{rmk_etale_fundamental_group}
For a connected orbifold curve $(X,P)$, the \'{e}tale fundamental group is studied in \cite[Section~2.2]{KP} in the sense of formal orbifolds and in \cite[Section~4]{Noohi} in the sense of DM stacks. The category $\text{\rm \'{E}t}_{(X,P)}$ of \'{e}tale covers of $(X,P)$ is a Galois category (see \cite[\href{https://stacks.math.columbia.edu/tag/0BMY}{Definition 0BMY}]{SP} for the definition of a Galois category). Also note that since a cover $(Y,Q) \longrightarrow (X,P)$ is \'{e}tale if and only if the cover $f_0 \colon Y \longrightarrow X$ is an essential \'{e}tale cover of $(X,P)$, and $Q = f_0^*P$ (see Lemma~\ref{lem_rep_etale_cover} and Remark~\ref{rmk_pullback_of_branch_data}), the connected objects $(Y,Q) \in \text{\rm \'{E}t}_{(X,P)}$ are precisely the essentially \'{e}tale covers $f_0 \colon Y \longrightarrow X$ with $Y$ connected. The \'{e}tale fundamental group $\pi_1(X,P)$ is defined as the colimit
$$\pi_1(X,P) \coloneqq \underset{i \in I}{\varprojlim} \, \text{\rm Aut}(Y_i/X)$$
where the indexing set is take over all the essentially \'{e}tale connected covers $Y_i \longrightarrow X$ of $(X,P)$. The above profinite group can also be defined only using Galois essentially \'{e}tale connected covers using \cite[Remark~2.7]{KP}. We refer to \cite[Section~2.2]{KP} for the results concerning the \'{e}tale fundamental groups and their homomorphisms induced by covers.
\end{remark}

\begin{remark}[{Vector bundles}]\label{rmk_bundles}
We follow \cite[Definition 7.18]{V} for the notion of coherent sheaves and vector bundles. A vector bundle (respectively, a quasi-coherent or a coherent sheaf) on a proper stacky curve $\mathfrak{X}$ is the data given by a vector bundle (respectively, a quasi-coherent or a coherent sheaf) on each atlas that satisfy certain co-cycle conditions. The structure sheaf $\mathcal{O}_{\mathfrak{X}}$ is the quasi-coherent sheaf defined by associating the structure sheaf $\mathcal{O}_Z$ for every atlas $Z$ of $\mathfrak{X}$. It should be noted that a quasi-coherent sheaf in the above sense is actually a quasi-coherent sheaf of $\mathcal{O}_{\mathfrak{X}}$-modules as in \cite[Definition~9.1.14, Proposition~9.1.15]{Olsson}. For any morphism $f \colon \mathfrak{Y} \longrightarrow \mathfrak{X}$ of stacky curves, we have the functors
$$\Vect(\mathfrak{Y}) \overset{f^*} \longrightarrow \Vect(\mathfrak{X}) \hspace{.5cm} \text{and} \hspace{.5cm} \Vect(\mathfrak{X}) \overset{f^*} \longrightarrow \Vect(\mathfrak{Y})$$
of the categories of vector bundles (\cite[Section~9.2.5. pg. 198 and Section~9.3.]{Olsson}; defined up to a canonical natural isomorphism for the choice of charts). Since the category of vector bundles on an orbifold curve in the sense of stacks coincides with the category of vector bundles (see \cite[Definition~4.4, Definition~4.5]{KM}) defined on the corresponding formal orbifold curve by Theorem~\ref{thm_equiv_bundles}, our identification of an orbifold curve with its corresponding formal orbifold curve is consistent with the above notions. It should be mentioned that a pullback is defined in \cite[Definition~5.16]{KM} only for \'{e}tale covers in the sense of formal orbifold curves, and a pushforward is defined in \cite[Definition~5.26]{KM}.
\end{remark}

\section{Pullback and Pushforward Maps}
We retain the notation and conventions from the previous section. In particular, given any cover $f \colon \mathfrak{Y} \longrightarrow \mathfrak{X}$ of stacky curves, we always denote the induced cover (cf. Definition~\ref{def_stacky_curve}) on the respective Coarse moduli curves by $f_0$.
\subsection{Representable Covers}
In the following, we address the question of representability of a cover of orbifold curves (see Remark~\ref{rmk_cover_curves}).

Given any cover $f \colon (Y,Q) \longrightarrow (X,P)$ of orbifold curves, we noted that $Q \geq f_0^*P$, and hence $f$ factors as a composition of the cover $(Y,Q) \longrightarrow (Y,f_0^*P)$ induced by $\text{\rm id}_Y$, followed by the induce cover $(Y,f_0^*P) \longrightarrow (X,P)$. We show that $f$ is a representable morphism (in the sense of a morphism of stacks) if and only if $Q = f_0^*P$; in particular, this shows that every cover has a maximal representable sub-cover. We also show that any \'{e}tale cover is representable. This will later enable us to identify the maximal \'{e}tale sub-cover only by considering representable covers. Further, we show that the pushforward functor $f_*$ under a representable cover $f$ is exact -- this will allow us to understand the pushforward of bundles more explicitly.

\begin{lemma}\label{lem_rep_etale_cover}
Let $f \colon \mathfrak{Y} \longrightarrow (X,P)$ be an \'{e}tale cover of connected stacky curves where $(X,P)$ is an orbifold curve. Then the induced cover $f_0 \colon Y \longrightarrow X$ of the Coarse moduli curves is an essentially \'{e}tale cover of $(X,P)$, and $\mathfrak{Y} = (Y, f_0^*P)$; see Definition~\ref{def_foc_maps} and Remark~\ref{rmk_pullback_of_branch_data}.
\end{lemma}

\begin{proof}
In view of Remark~\ref{rmk_cover_curves}, we may assume that $P$ is a non-trivial branch data. Consider an atlas $V \longrightarrow (X,P)$ and the corresponding fiber product DM stack $\mathfrak{Y} \times_{(X,P)} V$. Then the projection morphism $\mathfrak{Y} \times_{(X,P)} V \longrightarrow V$ is an \'{e}tale cover of $V$. So $\mathfrak{Y} \times_{(X,P)} V$ is a smooth $k$-curve. As $\mathfrak{Y} \times_{(X,P)} V \longrightarrow \mathfrak{Y}$ is an atlas, $\mathfrak{Y}$ is an orbifold curve. Thus $\mathfrak{Y} = (Y,Q)$ for some branch data $Q$ on $Y$. Since $f$ is an \'{e}tale cover, by Remark~\ref{rmk_pullback_of_branch_data}, $f_0 \colon Y \longrightarrow X$ is an essentially \'{e}tale cover of $(X,P)$, and $Q = f_0^*P$.
\end{proof}

\begin{lemma}\label{lem_rep_cover}
Let $f \colon (Y,Q) \longrightarrow (X,P)$ be a cover of connected orbifold curves. Then $f$ is a representable morphism of DM stacks if and only if $Q = f_0^*P$. In particular, every \'{e}tale cover is representable.
\end{lemma}

\begin{proof}
Using Remark~\ref{rmk_cover_curves}, we may suppose that $P$ is a non-trivial branch data $P$. Consider an atlas $g \colon V \longrightarrow (X,P)$. Since the projection morphism $p_1 \colon (Y,Q) \times_{(X,P)} V \longrightarrow (Y,Q)$ is an \'{e}tale cover of stacky curves, Lemma~\ref{lem_rep_etale_cover} says that $(Y,Q) \times_{(X,P)} V$ is an orbifold curve of the form $(Z, p_{1,0}^*Q)$ where $Z$ is the Coarse moduli curve, and $p_{1,0} \colon Z \longrightarrow Y$ is an essentially \'{e}tale cover of $(Y,Q)$ (the proof of the said lemma is applicable also when the orbifold curves are not necessary connected). By the universal property of the normalization, the curve $Z$ is the normalization of $Y \times_X V$. We can summarize the above observation in the following Cartesian square:
\begin{equation}\label{eq_Cart_atlas}
\begin{tikzcd}
(Z, p_{1,0}^*Q) = (Y,Q) \times_{(X,P)} V \arrow[r, "p_2"] \arrow[d, "p_1"] \arrow[dr, "\square", phantom] & V \arrow[d, "g"] \\
(Y,Q) \arrow[r, "f"] & (X,P)
\end{tikzcd}
\end{equation}

Now since $g \colon V \longrightarrow (X,P)$ is an atlas, $f$ is representable if and only if $p_2$ is representable. But the later statement is equivalent to $(Y,Q) \times_{(X,P)} V$ being a $k$-curve by Remark~\ref{rmk_cover_curves}. We conclude that $f$ is representable if and only if $(Y,Q) \times_{(X,P)} V$ is the normalization of the fiber product curve $Y\times_X V$.

By our above discussion, the representability of the cover $f$ is equivalent to the following condition: for any closed points $x \in X$, points $y \in Y, \, v \in V$ lying over $x$, and any irreducible component $W$ of $Z$ containing the point $w = (y,z)$, we have
\begin{equation}\label{eq_ex1}
p_{1,0}^*Q(w) = K_{W,w} = \text{\rm QF}(\widehat{\mathcal{O}}_{W,w}).
\end{equation}
As the field $K_{W,w} = \text{\rm QF}(\widehat{\mathcal{O}}_{W,w})$ is the compositum $K_{Y,y} \cdot K_{V,v} \cong K_{Y,y} \cdot P(x)$, the equality~\eqref{eq_ex1} is equivalent to the following:
\[
p_{1,0}^*Q(w) = Q(y) = K_{Y,y} \cdot P(x) = f_0^*P(y).
\]
So we conclude that $f$ is representable if and only if $Q = f_0^*P$.

Finally, if $f$ is an \'{e}tale cover, by Lemma~\ref{lem_rep_etale_cover}, $Q = f_0^*P$, and hence $f$ is a representable morphism.
\end{proof}

The following observations will be useful later.

\begin{remark}\label{rmk_natural_atlas_on_pullback}
\begin{enumerate}[leftmargin=*]
    \item If $f_0 \colon Y \longrightarrow X$ is a cover of smooth projective connected $k$-curves and $P$ is a branch data on $X$, then the fiber product $Y \times_X (X,P)$ is a reduced stacky curve whose normalization is $(Y,f_0^*P)$. To see this, consider an atlas $V \longrightarrow (X,P)$. Then $Y \times_X V \longrightarrow Y \times_X (X,P)$ is an atlas where the fiber product $Y \times_X V$ is a reduced $k$-curve. Now use the universal property of the normalization.\label{rmk:1}
    \item If $(X,P)$ is a geometric orbifold curve (see Definition~\ref{def_geometric}), then $(Y,f_0^*P)$ is again a geometric orbifold curve. Indeed, if $(X,P)$ is a quotient stack $[Z/G]$ for a finite group $G$ and a smooth projective connected $k$-curve $Z$, then $(Y,f_0^*P) \cong [W/G]$ where $W$ is any irreducible component of the fiber product $[Y \times_X Z]$.\label{rmk:2}
    \item Let $f \colon (Y, f_0^*P) \longrightarrow (X,P)$ be a representable cover. Taking the natural atlas $V = \sqcup_{0 \leq i \leq r} V_i \longrightarrow (X,P)$ from Lemma~\ref{lem_atlas}, as in the proof of the above lemma, the fiber product stack $(Y,f_0^*P) \times_{(X,P)} V$ is a smooth curve that is isomorphic to the disjoint union of the respective normalization of $Y \times_X V_i$. Then it follows that for any closed point $y \in Y$, the stabilizer group $H_y$ at $y$ is a sub-group of the stabilizer group $G_{f_0(y)}$ at $f_0(y)$; further, there are $|G_{f_0(y)}|/|H_y|$ many points in the curve $(Y,f_0^*P) \times_{(X,P)} V$ lying over $y$.\label{rmk:3}
\end{enumerate}
\end{remark}

\begin{lemma}\label{lem_push_exactness_rep_cover}
Let $f \colon (Y,f_0^*P) \longrightarrow (X,P)$ be a (representable) the cover. The following hold.
\begin{enumerate}
\item The pushforward functor $f_*$ is an exact functor of coherent sheaves of modules.\label{exact:1}
\item If $f_0$ is also $\Gamma$-Galois for some finite group $\Gamma$, and $E \in \text{\rm Vect}(X,P)$, then every $\Gamma$-equivariant sub-bundle of $f^*E$ is of the form $f^*F$ for a unique sub-bundle $F \subset E$.\label{ds}
\end{enumerate}
\end{lemma}

\begin{proof}
By \cite[Theorem~11.6.1]{Olsson}, for any coherent sheaf $F$ of $\mathcal{O}_{(Y, f_0^*P)}$-modules, $\mathrm{R}^if_* F$ is a coherent sheaf of $\mathcal{O}_{(X,P)}$-modules for all $i \geq 0$. Consider any atlas $g \colon V \longrightarrow (X,P)$. As in the proof of Lemma~\ref{lem_rep_cover}, the fiber product $(Y,f_0^*P) \times_{(X,P)} V$ is a smooth $k$-curve that is isomorphic to the normalization of $Y \times_X V$, the projection morphism $p_1 \colon (Y,f_0^*P) \times_{(X,P)} V \longrightarrow (Y,f_0^*P)$ is an atlas, and the Cartesian square~\eqref{eq_Cart_atlas} with $Q = f_0^*P$ becomes:
\begin{equation}\label{eq_geom_sq}
\begin{tikzcd}
(Y,f_0^*P) \times_{(X,P)} V \arrow[r, "p_2"] \arrow[d, "p_1"] \arrow[dr, "\square", phantom] & V \arrow[d, "g"] \\
(Y,f_0^*P) \arrow[r, "f"] & (X,P)
\end{tikzcd}
\end{equation}
Since $p_2$ is a cover of $k$-curves, the functor $p_{2,*}$ on the coherent sheaves of modules is exact by \cite[\href{https://stacks.math.columbia.edu/tag/03QP}{Proposition 03QP}]{SP}. By Proposition~\ref{prop_proj_bc}~\eqref{bc}, for any coherent sheaf $F$ of $\mathcal{O}_{(Y,f_0^*P)}$-modules, and $i \geq 0$, we have
\[
g^* \mathrm{R}^if_*F \cong \mathrm{R}^ip_{2,*} p_1^* F.
\]
In particular, $\mathrm{R}^i f_* F = 0$ for all $i \geq 1$. So  $f_*$ is an exact functor, proving~\eqref{exact:1}.

Now suppose that $f_0$ is a $\Gamma$-Galois cover. Let $E' \subset f^*E$ be a $\Gamma$-equivariant sub-bundle where $f^*E$ has the natural $\Gamma$-equivariant structure coming from the cover $f$. Let $g \colon V \longrightarrow (X,P)$ be any atlas. As in the first part, the fiber product $(Y,f_0^*P) \times_{(X,P)} V$ is isomorphic to the normalization of $Y \times_X V$, and the projection morphism $p_2$ is also a $\Gamma$-Galois cover of smooth $k$-curves. The bundle $p_2^*g^*E \cong p_1^*f^*E$ is $\Gamma$-equivariant with its $\Gamma$-equivariant sub-bundle $p_1^*E' \subset p_1^*f^*E$. By the descent of sub-bundles under a Galois flat morphism of schemes, there is a unique (up to a canonical isomorphism) sub-bundle $F' \subset g^*E$ such that $p_2^*F' \cong p_1^*E'$. Since the above holds for every choice of atlas $g$ of $(X,P)$ in a compatible way, and the pullback under $f^*$ is defined up to canonical isomorphisms, we obtain a sub-bundle $F \subset E$ such that $f^*F \cong E'$; hence, the statement~\eqref{ds}.
\end{proof}

\subsection{Divisor and Degree}\label{sec_degree}
We retain the previous notion and conventions. We will only consider proper connected orbifold curves, unless otherwise specified.

Let us review the theory of Weil divisors on orbifold curves; see \cite[Section~5.4]{VZB} for an exposition. After defining the rank and the degree of a vector bundle on an orbifold curve, we study their properties via the pullback and the pushforward under a cover. Such notions are not well studied when $\text{\rm char}(k) > 0$.

Let $(X,P)$ be an orbifold curve together with its Coarse moduli morphism $\iota \colon (X,P) \longrightarrow X$. By Definition~\ref{def_KM}, the set $|(X,P)(k)|$ of isomorphism classes of points in $(X,P)(k)$ are in a bijective correspondence with the set $X(k)$ of closed points in $X$. For each $x \in X(k)$, there is a unique strictly full subcategory $\mathfrak{Z}_x \subset (X,P)$ such that $\mathfrak{Z}_x$ is a $0$-dimensional closed irreducible reduced sub-stack of $(X,P)$, $|\mathfrak{Z}_x(k)|$ is singleton which maps to $x$ via $|\mathfrak{Z}_x(k)| \longrightarrow |(X,P)(k)| \cong X(k)$ (see \cite[\href{https://stacks.math.columbia.edu/tag/06ML}{Section 06ML}]{SP} and \cite[\href{https://stacks.math.columbia.edu/tag/06RD}{Lemma 06RD}]{SP}). This sub-stack $\mathfrak{Z}_x$ is called the \textit{residual gerbe} at $x$. More precisely, the residual gerbe of every closed point in $X - \text{\rm BL}(P)$ is itself, and for a stacky point $x \in \text{\rm BL}(P)$, we can see that $\mathfrak{Z}_x$ is isomorphic to the closed classifying sub-stack $BG_x$ of $(X,P)$ where $G_x$ is the stabilizer group $G_x = \text{\rm Gal}\left( P(x)/K_{X,x} \right)$ at $x$, and as usual, $K_{X,x_i} = \text{\rm QF} (\widehat{\mathcal{O}}_{X,x})$. So each closed point $x \in X$ bijectively corresponds to the residual gerbe $\mathfrak{Z}_x$.

\emph{With an abuse of notation, we also denote the residual gerbe $\mathfrak{Z}_X$ at a closed point $x \in X$ also by $x$.}

As residual gerbes should be treated as fractional points, \cite[Remark~5.2.3]{VZB} defines the $P$-degree of the point $x \in X(k)$ to be $\frac{1}{|G_x|} = \frac{1}{[P(x) \colon K_{X,x}]} \in \mathbb{Q}$. Clearly, this is integral if and only if $x$ is not a stacky point.

A \textit{(Weil) divisor} on $(X,P)$ is a finite formal sum of the residual gerbes of $(X,P)$. In other words, a divisor is an element of the free abelian group $\Div(X,P)$, generated by the residual gerbes of $(X,P)$. As an abstract group, we have $\text{\rm Div}(X,P) \cong \text{\rm Div}(X)$. The $P$-\textit{degree} $\text{\rm deg}_P(D)$ of a divisor $D = \sum\limits_x n_x x \in \Div(X,P)$ is defined $\mathbb{Z}$-linearly as
$$\text{\rm deg}_P(D) \coloneqq \sum\limits_x \frac{n_x}{[P(x) \colon K_{X,x}]}.$$
The \textit{support} of a divisor $D = \sum n_x x$ is the $0$-dimensional closed sub-stack of $(X,P)$ given by the union of gerbes $x$ such that $n_x \neq 0$. We say that a divisor $D = \sum n_x x \in \text{\rm Div}(X,P)$ is \textit{effective} if $n_x \geq 0$ for all $x$.

For any cover $f \colon (Y,Q) \longrightarrow (X,P)$ of connected orbifold curves, we define the pullback homomorphism
\begin{equation}\label{def_pull_divisor}
f^* \colon \Div(X,P) \longrightarrow \Div(Y,Q)
\end{equation}
as a $\mathbb{Z}$-linear map, given by
$$f^*(x) \coloneqq \sum_{f(y) = x} [Q(y) \colon P(x)] \, y$$
on a residual gerbe $x$. Since $(X,P)$ is generically isomorphic to $X$, any principal divisor on $(X,P)$ is of the form $\iota^* \text{\rm div}(\phi)$ for some non-zero rational function $\phi \in k(X)^*$ (see \cite[Lemma~4.1]{Kobin}). This defines the notion of a \textit{linear equivalence} for divisors on $(X,P)$ (\cite[Definition~5.4.2]{VZB}), and one can associate a line bundle $\mathcal{O}_{(X,P)}(D) \in \text{\rm Pic}(X,P)$ to a divisor $D$ on $(X,P)$ as follows. An effective divisor $D$ defines a closed sub-stack of $(X,P)$, which we also denote by $D$. Let $\mathcal{O}_{(X,P)}(-D)$ be its ideal sheaf. This corresponds to the exact sequence
\begin{equation}\label{ideal_sheaf}
0 \longrightarrow \mathcal{O}_{(X,P)}(-D) \longrightarrow \mathcal{O}_{(X,P)} \longrightarrow \mathcal{O}_D \longrightarrow 0
\end{equation}
of coherent sheaves of $\mathcal{O}_{(X,P)}$-modules, where $\mathcal{O}_D$ is the pushforward of the structure sheaf on $D$ via the closed immersion $D \hookrightarrow (X,P)$. We set
$$\mathcal{O}_{(X,P)}(D) \coloneqq \sheafhom_{\text{\rm Vect}(X,P)}(\mathcal{O}_{(X,P)}(-D), \mathcal{O}_{(X,P)}) \cong \mathcal{O}_{(X,P)}(-D)^{-1} \in \text{\rm Pic}(X,P).$$
As any divisor $D$ on $(X,P)$ is uniquely written as $D = D_1 - D_2$ for effective divisors $D_1, \, D_2$ with disjoint supports,
\[
\mathcal{O}_{(X,P)}(D) \coloneqq \mathcal{O}_{(X,P)}(D_1) \otimes \mathcal{O}_{(X,P)}(-D_2) \in \text{\rm Pic}(X,P).
\]

The following result is of importance to us.

\begin{lemma}[{\cite[Lemma~5.4.5]{VZB}}]\label{lem4.2}
Every line bundle $L \in \text{\rm Pic}(X,P)$ is of the form $\mathcal{O}_{(X,P)}(D)$ for some divisor $D\in \text{\rm Div}(X,P)$. Moreover, $\mathcal{O}_{(X,P)}(D) \cong \mathcal{O}_{(X,P)}(D')$ if and only if $D$ and $D'$ are linearly equivalent.
\end{lemma}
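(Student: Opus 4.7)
The strategy is to lift the problem to a Galois atlas where the classical divisor/line bundle correspondence on a smooth projective curve is available, and then to descend via Hilbert's Theorem 90.

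By Remark~\ref{rmk_Chow} (after enlarging $Z$ if necessary so that the cover is unramified over the stacky structure of $\mathfrak{X}$), fix a finite \'etale Galois cover $g \colon Z \longrightarrow \mathfrak{X}$ with $Z$ a smooth projective connected $k$-curve and Galois group $G$, so that $\mathfrak{X} \cong [Z/G]$. Under the equivalence~\eqref{eq_equivalence_orbifold_and_equivariant}, a line bundle $L$ on $\mathfrak{X}$ corresponds to a $G$-equivariant line bundle $\mathcal{L} := g^*L$ on $Z$. A divisor $D = \sum_x n_x \, x \in \Div(X,P)$ pulls back to the $G$-invariant divisor $g^*D := \sum_x n_x \sum_{z \in g^{-1}(x)} z \in \Div(Z)$ (reduced multiplicities, because $g$ is \'etale), and every $G$-invariant divisor on $Z$ arises this way since the $G$-orbits of closed points of $Z$ are exactly the fibres of $g$; moreover $g^* \mathcal{O}_{(X,P)}(D) = \mathcal{O}_Z(g^*D)$ with its canonical equivariant structure.

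For \emph{existence}, choose any nonzero rational section $s$ of $\mathcal{L}$, so $\mathcal{L} \cong \mathcal{O}_Z((s))$. The equivariant structure yields $\sigma \cdot s = c_\sigma \cdot s$ for each $\sigma \in G$, where $\{c_\sigma\}_{\sigma \in G}$ is a $1$-cocycle with values in $k(Z)^*$. Hilbert's Theorem 90 for the Galois field extension $k(Z)/k(X)$ gives $H^1(G, k(Z)^*) = 0$, so $c_\sigma = \sigma(f)/f$ for some $f \in k(Z)^*$. Replacing $s$ by $s' := f^{-1}s$ produces a $G$-invariant rational section; its divisor $\tilde{D}' := (s')$ is then a $G$-invariant element of $\Div(Z)$, and equals $g^*D$ for a unique $D \in \Div(X,P)$. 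Since $s'$ is $G$-invariant, the isomorphism $\mathcal{O}_Z(\tilde{D}') \cong \mathcal{L}$ given by multiplication by $s'$ is $G$-equivariant, and hence descends via~\eqref{eq_equivalence_orbifold_and_equivariant} to an isomorphism $\mathcal{O}_{(X,P)}(D) \cong L$.

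For the \emph{linear equivalence} statement, note that $k(\mathfrak{X}) = k(X)$ because $\mathfrak{X}$ is generically a scheme. If $\mathcal{O}_{(X,P)}(D) \cong \mathcal{O}_{(X,P)}(D')$, the resulting trivialization of $\mathcal{O}_{(X,P)}(D-D')$ corresponds to an $f \in k(X)^*$ whose stacky divisor on $\mathfrak{X}$ is $D - D'$, giving $D \sim D'$; conversely, writing $D - D' = (f)_{\mathfrak{X}}$ produces the desired isomorphism via multiplication by $f$. The main obstacle in the whole argument is the existence step: a priori only the divisor class of $\tilde D$ in $\mathrm{Pic}(Z)$ is $G$-invariant, not $\tilde D$ itself as a formal sum, and the Hilbert 90 splitting of the cocycle is precisely the tool that promotes this class-level invariance to an actual $G$-invariant representative that can be descended to $\mathfrak{X}$.
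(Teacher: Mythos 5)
The paper gives no proof of this lemma; it is imported verbatim from \cite[Lemma~5.4.5]{VZB}, so there is no internal argument to compare yours against. Judged on its own, your proof has a genuine gap at the very first step: you assume that after ``enlarging $Z$'' one can find a finite \emph{\'etale} Galois cover $g\colon Z\to\mathfrak{X}$ with $Z$ a smooth projective connected curve, i.e.\ that $\mathfrak{X}\cong[Z/G]$ with $Z\to\mathfrak{X}$ \'etale. That is precisely the condition that the corresponding formal orbifold curve be \emph{geometric} (item~(2) of the list in \S\ref{sec_stacky_curves}, \cite[Definition~2.28]{KP}), and it fails in general: a tame teardrop ($X=\mathbb{P}^1$ with a single stacky point of order $n\ge 2$ prime to $\mathrm{char}(k)$) is a proper orbifold curve with no connected \'etale scheme cover, since such a cover would be a connected cover of $\mathbb{P}^1$ tamely branched over a single point, which Riemann--Hurwitz forbids. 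Remark~\ref{rmk_Chow} only provides a Galois cover $Z\to X$ \emph{dominating} the coarse map; the induced $Z\to\mathfrak{X}$ is finite but in general ramified, no enlargement removes the ramification, and for such a $Z$ the equivalence~\eqref{eq_equivalence_orbifold_and_equivariant} must be replaced by the strict embedding~\eqref{eq_inclusion_for_vect_on_stacky_curves}, so the $G$-invariant divisor you produce on $Z$ descends only to a divisor on $[Z/G]$, not to one on $\mathfrak{X}$.

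In the geometric case your argument is correct and clean: Hilbert~90 does exactly what you say, upgrading $G$-invariance of the class of $\mathrm{div}(s)$ to an honest $G$-invariant divisor, and the identification of $G$-invariant divisors on $Z$ with $\Div(X,P)$ is valid because $g$ is \'etale. But the general case cannot be reduced to this one by passing to a geometric refinement $Q\ge P$ as in \cite[Proposition~2.30]{KP}, because $\iota^*\colon\mathrm{Pic}(X,P)\to\mathrm{Pic}(X,Q)$ is not surjective and $\iota^*\mathcal{O}_{(X,P)}(x)=\mathcal{O}_{(X,Q)}\left([Q(x):P(x)]\,x\right)$, so one would still have to control the local class of $L$ at each stacky point. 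That local statement --- every line bundle on $\left[\Spec(R_x)/G_x\right]$ is isomorphic to $\mathcal{O}(a_x x)$ for some integer $a_x$, after which $L(-\sum_x a_x x)$ descends to the coarse curve $X$ and the classical divisor--line bundle correspondence finishes the proof --- is the real content of the lemma, and it is entirely bypassed by your global-quotient assumption.
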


We define the \textit{degree of a line bundle} as follows.

\begin{definition}\label{def_deg_lb_DM}
Let $L$ be a line bundle on an orbifold curve $(X,P)$. Then $L \cong \mathcal{O}_{(X,P)}(D)$ for a divisor $D$ on $(X,P)$, unique up to a linear equivalence (Lemma~\ref{lem4.2}). Define the $P$-degree of $L$ to be
$$\text{\rm deg}_P(L) \coloneqq \text{\rm deg}_P(D).$$
\end{definition}

Now we define the degree and rank of a bundle. Let $E \in \text{\rm Vect}(X,P)$. As before, $\iota \colon (X,P) \longrightarrow X$ stands for the Coarse moduli morphism.

The \textit{rank} $\text{\rm rk}(E)$ of $E$ is defined to be the rank of the bundle $\iota_* E$. Since the rank is a generic property, and $(X,P)$ is generically isomorphic to $X$, our notion is well defined. More precisely, for any atlas $u \colon U \longrightarrow (X,P)$, we have
$$\text{\rm rk}(u^*E) = \text{\rm rk}((\iota \circ u)^* \iota_*E) = \text{\rm rk}(\iota_*E) = \text{\rm rk}(E).$$

To $E$, we naturally associated the \textit{determinant line bundle} by $\text{\rm det}(E) \coloneqq \wedge^{\text{\rm rk}(E)} E \in \text{\rm Pic}(X,P)$. The $P$-\textit{degree} of $E$ is defined as
$$\text{\rm deg}_P(E) \coloneqq \text{\rm deg}_P\left( \text{\rm det} \left(E\right) \right).$$
In the following, we summarize the properties of divisors and vector bundles under a cover.

\begin{lemma}\label{lem_pullback_div_line_bundles}
Let $f \colon (Y,Q) \longrightarrow (X,P)$ be a cover of connected orbifold curves. As usual, write $f_0 \colon Y \longrightarrow X$ for the cover induced on the Coarse moduli curves. Then $f$ induces a homomorphism
$$f^* \colon \Div(X,P) \longrightarrow \Div(Y,Q)$$
that takes principal divisors to principal divisors. This defines a homomorphism
$$f^* \colon \text{\rm Pic}(X,P) \longrightarrow \text{\rm Pic}(Y,Q)$$
which coincides with the usual pullback of a line bundle as a coherent sheaf on a DM stack (see \cite[Section~9.3., page 203]{Olsson}). Under this map, for any $L \in \text{\rm Pic}(X,P)$, we have
$$\text{\rm deg}_Q f^*L = \text{\rm deg}(f_0) \cdot \text{\rm deg}_P L.$$
Further, let $E$ be a bundle on $(X,P)$ of rank $n$. The pullback coherent sheaf $f^*E$ is a bundle of rank $n$, and $\text{\rm det}(f^*E) \cong f^* \text{\rm det}(E)$. In particular,
\begin{eqnarray*}
\text{\rm deg}_Q (f^*E) = \text{\rm deg}(f_0) \cdot \text{\rm deg}_P(E), \\
\mu_Q(f^* E) = \text{\rm deg}(f_0) \cdot \mu_P(E).
\end{eqnarray*}
\end{lemma}

\begin{proof}
The notion of the pullback $f^*$ in~\eqref{def_pull_divisor} for a divisor is compatible with principal divisors, i.e. for any non-zero rational function $\phi \in k(X)^*$, we have $f^* \iota^* \text{\rm div}(\phi) = j^* f_0^* \text{\rm div}(\phi)$ where $\iota \colon (X,P) \longrightarrow X$ and $j \colon (Y,Q) \longrightarrow Y$ are the respective Coarse moduli morphisms, and $f_0^* \text{\rm div}(\phi)$ is the principal divisor associated to the function $\phi$, viewed as an element in the extension field $k(Y) \supset k(X)$. This induces a homomorphism
$$(f^*)^{\text{\rm ind}} \colon \text{\rm Pic}(X,P) \longrightarrow \text{\rm Pic}(Y,Q).$$
By the definition and previously discussed properties, to check that $(f^*)^{\text{\rm ind}} = f^*$, it is enough to show that
\[
f^* \left( \mathcal{O}_{(X,P)}(D) \right) \cong \mathcal{O}_{(Y,Q)}(f^*D)
\]
for every effective divisor $D$. We have the exact sequence~\eqref{ideal_sheaf}
$$0 \longrightarrow \mathcal{O}_{(X,P)}(-D) \longrightarrow \mathcal{O}_{(X,P)} \longrightarrow \mathcal{O}_D \longrightarrow 0$$
of coherent sheaves of $\mathcal{O}_{(X,P)}$-modules. Twisting by the line bundle $\mathcal{O}_{(X,P)}(D)$, we obtain the short exact sequence
$$0 \longrightarrow \mathcal{O}_{(X,P)} \longrightarrow \mathcal{O}_{(X,P)}(D) \longrightarrow \mathcal{O}_D \longrightarrow 0$$
of coherent sheaves of $\mathcal{O}_{(X,P)}$-modules; here we use that fact that the functor $- \otimes \mathcal{O}_{(X,P)}(D)$ is exact, and that $\mathcal{O}_D \otimes \mathcal{O}_{(X,P)}(D) \cong \mathcal{O}_D$. Taking a pullback under the exact functor $f^*$, we obtain a short exact sequence
$$0 \longrightarrow \mathcal{O}_{(Y,Q)} \longrightarrow f^* \left( \mathcal{O}_{(X,P)}(D) \right) \longrightarrow f^*\mathcal{O}_D \longrightarrow 0$$
of coherent sheaves of $\mathcal{O}_{(Y,Q)}$-modules. We may replace $(X,P)$ and $(Y,Q)$ by affine sub-stacks to see that $f^*\mathcal{O}_D$ is the coherent sheaf $\mathcal{O}_{f^*D}$ on $(Y,Q)$ which is the structure sheaf on the closed sub-stack of $(Y,Q)$ defined by the effective divisor $f^*D$. Thus
$$f^* \left( \mathcal{O}_{(X,P)}(D) \right) \cong \text{\rm det}\left( \mathcal{O}_{f^*D} \right) \cong \mathcal{O}_{(Y,Q)}(f^*D).$$
So $(f^*)^{\text{\rm ind}}$ and $f^*$ coincide for $\mathcal{O}_{(X,P)}(D)$, and hence for any line bundle $L \in \text{\rm Pic}(X,P)$. Using the property of $(f^*)^{\text{\rm ind}}$, we also see that for any $L \in \text{\rm Pic}(X,P)$, and $f^* L$ has $Q$-degree $\text{\rm deg}(f_0) \cdot \text{\rm deg}_P(L)$.

The statements for the bundle is immediate since $f^*$ is an exact functor which commutes with the formation of the determinant line bundle, and the property of rank with respect to pullback under a cover of curves.
\end{proof}

Any cover $f \colon (Y,Q) \longrightarrow (X,P)$ uniquely factors as a composition of the induced cover $(Y,Q) \longrightarrow (Y,f_0^*P)$ (as $Q \geq f_0^*P$) followed by a representable cover $(Y, f_0^*P) \longrightarrow (X,P)$; see Lemma~\ref{lem_rep_cover}. The pushforward functor $f_*$ behaves differently under these two covers, and needs a separate treatment.

First, consider the pushforward of a divisor under a representable cover $f \colon (Y,f_0^*P) \longrightarrow (X,P)$ of orbifold curves. Define a homomorphism
\begin{equation}\label{def_push_div}
f_* \colon \text{\rm Div}(Y,f_0^*P) \longrightarrow \text{\rm Div}(X,P),
\end{equation}
by sending a divisor $\sum n_y y \in \text{\rm Div}(Y, f_0^*P)$ to $\sum n_y \, \frac{[P(f_0(y)) \colon K_{X,f_0(y)}]}{[f_0^*P(y) \colon K_{Y,y}]} \, f(y) \in \text{\rm Div}(X,P)$.

\begin{remark}\label{rmk_def_div_pushforward}
Note that this is the usual definition if $P$ is the trivial branch data. To justify the above definition for a non-trivial branch data $P$, consider the natural atlas $V \longrightarrow (X,P)$ from Lemma~\ref{lem_atlas}. Let $y$ be a residual gerbe in $(Y,f_0^*P)$, and $x = f(y)$. We noted in Remark~\ref{rmk_natural_atlas_on_pullback} that the stabilizer group $H_y$ at $y$ is a subgroup of the stabilizer group $G_x = \text{\rm Gal}\left( P(x)/K_{X,x} \right)$ at $x$, that the projection morphism $q_1 \colon (Y,f_0^*P) \times_{(X,P)} V \longrightarrow (Y,f_0^*P)$ is an atlas, and $q_1^*y$ is a divisor of degree $|G_x|/|H_y|$ in the smooth curve $(Y,f_0^*P) \times_{(X,P)} V$. Under the cover $q_2 \colon (Y,f_0^*P) \times_{(X,P)} V \longrightarrow V$ of smooth curves, the divisor $q_1^*y$ is mapped to the divisor $q_{2,*}q_1^*y$ of the same degree; to see this, we can restrict to the connected components of the respective atlases, and after a smooth projective completion, apply \cite[Exc. IV.2.6]{Ha}. Thus the closed sub-stack $f_*y$ has support $\{x\}$, and corresponds to the equivariant divisor $q_{1,*}q_2^*y$ of degree $|G_x|/|H_y|$. Hence, $f_*y$ is the divisor $\frac{|G_x|}{|H_y|} \, x = \frac{[P(x) \colon K_{X,x}]}{[f_0^*P(y) \colon K_{Y,y}]} \, x$.
\end{remark}

As every line bundle on $(Y,f_0^*P)$ is of the form $\mathcal{O}_{(Y,f_0^*P)}(D)$ for some $D \in \text{\rm Div}(Y,f_0^*P)$, it is important to understand the relation of the coherent sheaf $f_*(\mathcal{O}_{(Y,f_0^*P)}(D))$ with $\mathcal{O}_{(X,P)}(f_*D)$. Since the functor $f_*$ is an exact functor by Lemma~\ref{lem_push_exactness_rep_cover}, we immediately have the following result.

\begin{lemma}\label{lem_fushforward_rep_cover_bundles}
Let $f_0 \colon Y \longrightarrow X$ be a cover of smooth projective connected $k$-curves. Suppose that $P$ is a branch data on $X$. Consider the induced representable cover $f \colon (Y, f_0^*P) \longrightarrow (X,P)$. For any $F \in \Vect(Y,f_0^*P)$, the pushforward coherent sheaf $f_*F$ is a bundle on $(X,P)$ of rank
\[
\text{\rm rk}(f_*F) = \text{\rm deg}(f_0) \cdot \text{\rm rk}(F).
\]
\end{lemma}

\begin{proof}
Consider any atlas $g \colon V \longrightarrow (X,P)$. As in the proof of Lemma~\ref{lem_push_exactness_rep_cover}, consider the cartesian square~\ref{eq_geom_sq} to compute the pushforward sheaf $f_*F$. As the projection map $p_2 \colon (Y,f_0^*P) \times_{(X,P)} V \longrightarrow V$ is a cover of smooth $k$-curves, $p_{2,*}$ is an exact functor, and $p_{2,*}p_1^*F$ is a bundle on $V$. As this is compatible for different choices of the atlas, we conclude that $f_*F$ is a bundle on $(X,P)$. The statement about the ranks follow since we have
\[
\text{\rm rk}(f_*F) = \text{\rm rk}(p_{2,*}p_1^*F) = \text{\rm deg}(p_2) \cdot \text{\rm rk}(p_1^*F) = \text{\rm deg}(f) \cdot \text{\rm rk}(F) = \text{\rm deg}(f_0) \cdot \text{\rm rk}(F).
\]
\end{proof}

To state our next result, we follow up on the Grothendieck duality for a cover of orbifold curves.

\begin{remark}[{Consequences of the Grothendieck Duality}]\label{rmk_Groth}
Recall from \cite[Theorem~1.16]{Nironi} that for any proper morphism $f$ of quasi compact algebraic $k$-stacks with affine diagonals, the derived functor $\mathrm{R}f_*$ of bounded below derived categories admits a right adjoint $f^!$. Let $f_0, \, P$, and $f \colon (Y,f_0^*P) \longrightarrow (X,P)$ be as in the hypothesis of Lemma~\ref{lem_fushforward_rep_cover_bundles}. Then $f_*$ is an exact functor. As $f$ is also flat, for every coherent sheaf $F$ of $\mathcal{O}_{(X,P)}$-modules, we have $f^! F \cong f^*F \otimes f^! \mathcal{O}_{(X,P)}$ (\cite[Proposition~1.20]{Nironi}). Moreover, setting $\eta_{(Y,f_0^*P)}$ and $\eta_{(X,P)}$ as the structure morphisms of $(Y,f_0^*P)$ and $(X,P)$, respectively, the respective dualizing complexes (\cite[Definition~1.22, Theorem~2.22 -- Smooth Serre Duality]{Nironi}) are given by
\begin{equation}\label{omegas}
\eta_{(Y,f_0^*P)}^! \mathcal{O}_k \cong f^! \eta_{(X,P)}^! \mathcal{O}_k \cong \omega_{(Y,f_0^*P)}[1] \, \text{ and } \, \eta_{(X,P)}^! \mathcal{O}_k \cong \omega_{(X,P)}[1]
\end{equation}
where $\omega_{(Y,f_0^*P)}$ and $\omega_{(X,P)}$ are the respective canonical line bundles (see \cite[Proposition~7.1]{Kobin} for the definition). In particular, $f^! \mathcal{O}_{(X,P)}$ is the line bundle $\mathcal{O}_{(Y,f_0^*P)}(R)$ where $R$ is the ramification divisor for the cover $f$, given by
\begin{equation}\label{ramification_divisor}
R \coloneqq \sum_y \text{\rm dr}(f_0^*P(y)/P(f_0(y))) \, y \in \text{\rm Div}(Y,f_0^*P)
\end{equation}
where $\text{\rm dr}(f_0^*P(y)/P(f_0(y)))$ is the discriminant of the finite extension $f_0^*P(y)/P(f_0(y))$; cf. \cite[see Section~2.1, cf. Definition~2.18]{KP}. Let $E \in \text{\rm Vect}_{(Y, f_0^*P)}$ and $F \in \text{\rm Vect}_{(X,P)}$. Then $f_*E$ is also a bundle by Lemma~\ref{lem_fushforward_rep_cover_bundles}. So the functors $\sheafhom_{(Y,f_0^*P)}(E, -) \cong E^\vee \otimes_{(\mathcal{O}_{(Y,f_0^*P)})} -$ and $\sheafhom_{(X,P)}(f_*E, -) \cong (f_*E)^\vee \otimes_{\mathcal{O}_{(X,P)}} -$ are exact functors. From the sheaf version of the Grothendieck duality in \cite[Corollary~2.10]{Nironi}, we deduce that the natural map
\begin{equation}\label{eq_Grothendieck_isom}
f_* \, \sheafhom_{(Y, f_0^*P)}(E , f^! F) \longrightarrow \sheafhom_{(Y, f_0^*P)}(f_* E , f_* f^! F) \overset{\text{\rm tr}_f}\longrightarrow \sheafhom_{(X , P)}(f_* E , F)
\end{equation}
is an isomorphism; here $\text{\rm tr}_f$ is the trace morphism induced by the natural transformation $f_*f^! \Rightarrow \text{\rm id}$.
\end{remark}

\begin{lemma}\label{lem_push_div_line_rep}
Under the hypothesis of Lemma~\ref{lem_fushforward_rep_cover_bundles}, the following hold.
\begin{enumerate}[leftmargin=*]
\item For any $L \in \text{\rm Pic}(Y, f_0^*P)$, the pushforward coherent sheaf $f_*L$ is a bundle on $(X,P)$ of rank $n$. For any $D \in \text{\rm Div}(Y, f_0^*P)$, we have
$$\text{\rm det}(f_* \mathcal{O}_{(Y,f_0^*P)}(D)) \cong \text{\rm det}(f_* \mathcal{O}_{(Y,f_0^*P)}) \otimes \mathcal{O}_{(X,P)}(f_*D).$$\label{fin:1}
\item The pushforward homomorphism in~\eqref{def_push_div} preserves linear equivalence. The composite homomorphism $f_* \circ f^* \colon \text{\rm Div}(X,P) \longrightarrow \text{\rm Div}(X,P)$ is the multiplication by $n$ map.\label{fin:2}
\item $\text{\rm det}(f_* \omega_{(Y, f_0^*P)}) \cong \left(\text{\rm det}(f_* \mathcal{O}_{(Y, f_0^*P)})\right)^{-1} \otimes \omega_{(X,P)}^{\otimes n}.$\label{fin:3}
\item Let $R$ be the ramification divisor on $(Y, f_0^*P)$ for the cover $f$ given by~\eqref{ramification_divisor}, and set $B \coloneqq f_*R \in \text{\rm Div}(X,P)$. Then
$$\left(\text{\rm det}(f_* \mathcal{O}_{(Y, f_0^*P)})\right)^{\otimes 2} \cong \mathcal{O}_{(X,P)}(-B).$$\label{fin:4}
\end{enumerate}
\end{lemma}

\begin{proof}
For any line bundle $L$ on $(Y, f_0^*P)$, the pushforward coherent sheaf $f_*L$ is a bundle on $(X,P)$ of rank $n$ by Lemma~\ref{lem_fushforward_rep_cover_bundles}. For the rest of \eqref{fin:1}, we proceed as in the case of curves (see \cite[Exc. IV.2.6, page 306]{Ha}). As usual, it is enough to prove the statement only for an effective divisor $D$ on $(Y,f_0^*P)$. We also denote the corresponding $0$-dimensional closed sub-stack of $(Y,f_0^*P)$ by $D$. We have the short exact sequence~\eqref{ideal_sheaf}
$$0 \longrightarrow \mathcal{O}_{(Y,f_0^*P)}(-D) \longrightarrow \mathcal{O}_{(Y,f_0^*P)} \longrightarrow \mathcal{O}_D \longrightarrow 0$$
of coherent sheaves of $(Y,f_0^*P)$-modules. First twisting by the line bundle $\mathcal{O}_{(Y,f_0^*P)}(D)$, and then taking the pushforward under the exact functor $f_*$, we obtain the short exact sequence
$$0 \longrightarrow f_*\mathcal{O}_{(Y,f_0^*P)} \longrightarrow f_* \left( \mathcal{O}_{(Y,f_0^*P)}(D) \right) \longrightarrow f_*\mathcal{O}_D \longrightarrow 0$$
of coherent sheaves of $\mathcal{O}_{(Y,f_0^*P)}$-modules. Considering the respective determinants, we obtain
$$\text{\rm det}\left( f_*\left(\mathcal{O}_{(Y,f_0^*P)}(D)\right) \right) \cong \text{\rm det}\left( f_*\mathcal{O}_{(Y,f_0^*P)} \right) \otimes \text{\rm det}\left( f_*\mathcal{O}_D \right).$$
By Remark~\ref{rmk_def_div_pushforward}, if $D = \sum n_i y_i$, the closed sub-stack $D$ is mapped to the closed sub-stack defined by $\sum n_i f_*(y_i) = \sum n_i \, \frac{[P(f_0(y_i)) \colon K_{X,f_0(y_i)}]}{[f_0^*P(y_i) \colon K_{Y,y_i}]} \, f(y_i)$ of $(X,P)$. Thus $f_* \mathcal{O}_D \cong \oplus_{i} \mathcal{O}_{n_i f_*(y_i)}$, and hence $\text{\rm det}\left( f_* \mathcal{O}_D \right) \cong \mathcal{O}_{(X,P)}(f_*D)$. We obtain the reuired statement:
$$\text{\rm det}\left( f_*\mathcal{O}_{(Y,f_0^*P)}(D) \right) \cong \text{\rm det}\left( f_*\mathcal{O}_{(Y,f_0^*P)} \right) \otimes \mathcal{O}_{(X,P)}(f_*D).$$

Now we prove \eqref{fin:2}. Let $\iota$ and $j$ be the Coarse moduli morphism for $(X,P)$ and $(Y,f_0^*P)$, respectively. We note that the two homomorphisms
$$f_* \circ j^*, \, \iota^* \circ f_{0,*} \colon \text{\rm Div}(Y) \longrightarrow \text{\rm Div}(X,P)$$
coincide: for a divisor $\sum_y n_y \, y$,
\begin{eqnarray*}
f_* \circ j^* (\sum_y n_y \, y) = \sum_y n_y [f_0^*P(y) \colon K_{Y,y}] \frac{[P(f_0(y)) \colon K_{X,f_0(y)}]}{[f_0^*P(y) \colon K_{Y,y}]} \, f(y) \\
= \sum_y n_y \, [P(f_0(y)) \colon K_{X,f_0(y)}] f(y) = \sum_y n_y \, \iota^* f_0(y).
\end{eqnarray*}
Since a principal divisor on $(Y,f_0^*P)$ is of the form $j^* \text{\rm div}(\phi)$ for some $\phi \in k(Y)^*$, we have
$$f_* j^* \text{\rm div}(\phi) = \iota^* \text{\rm div}(N(\phi))$$ where $N(\phi) \in k(X)^*$ is the norm of $\phi$ under the cover $f_0$. Thus $f_*$ preserves linear equivalence. Further, for a residual gerbe $x$ on $(X,P)$,
$$f_*f^*x = f_* \left( \sum_{f(y)=x} [f_0^*P(y) \colon P(x)]  \, y \right) = \sum_{f(y)=x} [f_0^*P(y) \colon P(x)] \, \frac{[P(x) \colon K_{X,x}]}{[f_0^*P(y) \colon K_{Y,y}]} \, x = n \, x.$$
Since $f^*$ and $f_*$ are defined linearly, $f_*f^*$ is the multiplication by $n$ map.

As noted in Remark~\ref{rmk_Groth}, we have $\omega_{(Y,f_0^*P)} \cong f^! \omega_{(X,P)}$. By the Grothendieck duality~\eqref{eq_Grothendieck_isom}, we have an isomorphism
$$f_* \omega_{(Y,f_0^*P)} \cong f_* \sheafhom_{(Y,f_0^*P)}(\mathcal{O}_{(Y,f_0^*P)}, f^!\omega_{(X,P)}) \cong \sheafhom_{(X,P)}(f_*\mathcal{O}_{(Y,f_0^*P)}, \omega_{(X,P)}).$$
Since $f_*\mathcal{O}_{(Y,f_0^*P)}$ is a vector bundle of rank $n$, we have
$$\sheafhom_{(X,P)}(f_*\mathcal{O}_{(Y,f_0^*P)}, \omega_{(X,P)}) \cong \left( f_*\mathcal{O}_{(Y,f_0^*P)} \right)^\vee \otimes \omega_{(X,P)}.$$
Considering the determinants, we obtain \eqref{fin:3}.

We have $\omega_{(Y,f_0^*P)} \cong \mathcal{O}_{(Y,f_0^*P)}(K_{(Y,f_0^*P)})$, and $\omega_{(X,P)} \cong \mathcal{O}_{(X,P)}(K_{(X,P)})$ where $K_{(Y,f_0^*P)}$ and $K_{(X,P)}$ are the canonical divisors in \cite[Proposition~7.1]{Kobin}. By Remark~\ref{rmk_Groth}\eqref{omegas}, the divisors $K_{(Y,f_0^*P)}$ and $f^*K_{(X,P)} + R$ are linearly equivalent. By \eqref{fin:2}, we see that $f_*K_{(Y,f_0^*P)}$ and $f_*(f^*K_{(X,P)}) + f_*R$ are linearly equivalent, and $f_*f^*$ is the multiplication by $n$ map. So we have
\begin{equation}\label{l:1}
\mathcal{O}_{(X,P)}(-B) \cong \omega_{(X,P)}^{\otimes n} \otimes \mathcal{O}_{(X,P)}(f_* K_{(Y,f_0^*P)})^{-1}.
\end{equation}
We have
$$\mathcal{O}_{(X,P)}(f_* K_{(Y,f_0^*P)})^{-1} \cong \text{\rm det}(f_* \mathcal{O}_{(Y,f_0^*P)}) \otimes \left( \text{\rm det}(f_* \omega_{(Y,f_0^*P)}) \right)^{-1} \cong \left( \text{\rm det}(f_* \mathcal{O}_{(Y,f_0^*P)}) \right)^{\otimes 2} \otimes \omega_{(X,P)}^{\otimes -n}$$
where the first isomorphism is by \eqref{fin:1}, and the second isomorphism is by \eqref{fin:3}. Setting this in Equation~\ref{l:1}, we conclude~\eqref{fin:4}.
\end{proof}

Finally, we consider the pushforward of a bundle under a non-representable cover. We have the following result.

\begin{lemma}\label{lem_push_div_line_nonrep}
Let $X$ be a smooth projective connected $k$-curves, $Q \geq P$ be two branch data on $X$. Let $j \colon (X,Q) \longrightarrow (X,P)$ be the cover induced by $\text{\rm id}_X$. Let $D = \sum n_x x \in \text{\rm Div}(X,Q)$. Consider the divisor
$$\floor{D} \coloneqq \sum \floor{\frac{n_x}{[Q(x)\colon P(x)]}} x \in \text{\rm Div}(X,P)$$
where $\floor{\frac{n_x}{[Q(x)\colon P(x)]}}$ denote the integral part of $\frac{n_x}{[Q(x)\colon P(x)]}$. Then $j_* \mathcal{O}_{(X,Q)}(D)$ is the line bundle given by $\mathcal{O}_{(X,P)}(\floor{D})$. In particular, $j_* \mathcal{O}_{(X,Q)} \cong \mathcal{O}_{(X,P)}$. Further, for any bundle $E$ on $(X,Q)$ of rank $n$, the pushforward coherent sheaf $j_*E$ is a bundle on $(X,P)$ of rank $n$.
\end{lemma}

\begin{proof}
To prove the first two statements, considering an atlas of $(X,P)$, and since the divisors have finite support contained in an affine sub-stack, we are reduced to the case: $P$ is the trivial branch data, and $j \colon (X,Q) \longrightarrow X$ is the Coarse moduli morphism. Then the result is \cite[Lemma~4.10]{Kobin}.

For the last statement, again considering atlases, we see that the pushforward of a bundle is again a bundle. The rank remains the same since it is defined generically.
\end{proof}

\section{Slope Stability}\label{sec_slope_stability}
\subsection{Slope Stability for Orbifold Curves}\label{sec_stability_f_o_c}
The purpose of this section is to define and study the slope stability conditions. Unless otherwise specified, we work with the following notation.

\begin{notation}\label{not_geometric_notation}
Let $(X,P)$ be a connected proper orbifold curve, and $\iota \colon (X,P) \longrightarrow X$ be the Coarse moduli morphism. Let $Q \geq P$ be a geometric branch data on $X$ such that $(X,Q) = [Z/G]$ for a $G$-Galois cover $g_0 \colon Z \longrightarrow X$ of smooth projective connected $k$-curves (cf. Remark~\ref{rmk_geometric_results}). The cover $g_0$ factors as the composition of the covers
\begin{equation}\label{eq_g0_factors}
g_0 \colon Z \overset{u}\longrightarrow [Z/G] = (X,Q) \overset{j}\longrightarrow (X,P) \overset{\iota}\longrightarrow X
\end{equation}
where the canonical morphism $u$ is an atlas, $\iota \circ j \colon (X,Q) \longrightarrow X$ is the Coarse moduli morphism. Set $g \coloneqq j \circ u \colon  Z \longrightarrow (X,P)$.
\end{notation}

Let $E \in \text{\rm Vect}(X,P)$. The rank $\text{\rm rk}(E) = \text{\rm rk}(\iota_*E)$ and the $P$-degree $\text{\rm deg}_P(E)$ of $E$ are defined in Section~\ref{sec_degree}. The $P$-\textit{slope} $\mu_P(E)$ of $E$ is defined as
\begin{equation}\label{def_P-slope}
\mu_P(E) \coloneqq \frac{\text{deg}_P(E)}{\text{\rm rk} (E)}.
\end{equation}

We recall that there is also the notion of an equivariant slope (that we define in Definition~\ref{def_equivariant_slope}) for $E$ as follows. The functor $u^*$ defines an equivalence of categories (\cite[Definition~7.18]{V})
\begin{equation}\label{eq_equivalence_orbifold_equivariant_bundle}
u^* \colon \text{\rm Vect}(X,Q) \overset{\sim}\longrightarrow \text{\rm Vect}^G(Z)
\end{equation}
between the bundles on $(X,Q)$ and the $G$-equivariant bundles on $Z$, with a quasi-inverse defined by the equivariant pushforward $u^G_*$ (to see that this defines a quasi-inverse, one can work over charts and use the Galois \'{e}tale descent for schemes). We also have an embedding of categories
\begin{equation}\label{eq_inclusion_for_vect_on_stacky_curves}
j^* \colon \text{\rm Vect}(X,P) \hookrightarrow \text{\rm Vect}(X,Q).
\end{equation}
To see this, note that the functor $j^*$ is injective on the objects, and for any two bundles $E, \, F \in \text{\rm Vect}(X,P)$, by \cite[Proposition~9.3.6, pg. 205]{Olsson} and the projection formula~Proposition~\ref{prop_proj_bc}, we have the following.
\begin{equation*}
\begin{array}{rcl}
\Hom_{\text{\rm Vect}(X,Q)}\left( j^* E, j^* F \right) & = & \Hom_{\text{\rm Vect}(X,P)}\left( E, j_* j^* F \right) \\
 & = & \Hom_{\text{\rm Vect}(X,P)}\left( E, F \otimes_{\mathcal{O}_{(X,P)}} j_*\mathcal{O}_{(X,Q)} \right) \\
 & = & \Hom_{\text{\rm Vect}(X,P)}\left( E, F \right)
\end{array}
\end{equation*}
The last equality follows since $j_* \mathcal{O}_{(X,Q)} \cong \mathcal{O}_{(X,P)}$ by Lemma~\ref{lem_push_div_line_nonrep}.

\begin{definition}\label{def_equivariant_slope}
Under the above notation, we have the embedding (as the composition of the functors~\eqref{eq_equivalence_orbifold_equivariant_bundle} and \eqref{eq_inclusion_for_vect_on_stacky_curves})
\[
g^* = (j \circ u)^* \colon \text{\rm Vect}(X,P) \hookrightarrow \text{\rm Vect}^G(Z).
\]
For any $E \in \Vect(X,P)$, define the \textit{equivariant degree} and the \textit{equivariant slope} of $E$ as follows.
$$\text{\rm deg}^{\text{\rm eq}}_{(X,P)}(E) \coloneqq \frac{1}{|G|} \text{\rm deg}( g^* E), \, \hspace{.5em} \text{and } \, \hspace{.5em} \mu^{\text{\rm eq}}_{(X,P)}(E) \coloneqq \frac{1}{|G|} \mu( g^* E).$$ 
\end{definition}

\begin{remark}
To see that the rational numbers $\text{\rm deg}^{\text{\rm eq}}_{(X,P)}$ and $\mu^{\text{\rm eq}}_{(X,P)}$ do not depend on the choice of the cover $g_0 \colon Z \longrightarrow X$, it is enough to consider the case $(X,P) = (X,Q) = [Z/G]$. Suppose that $[Z/G] = [Z'/G']$, and let $\mathcal{E}'$ be the $G'$-equivariant bundle on $Z'$ corresponding to $E$. Then both $g^* E$ and $\mathcal{E}'$ pullback to the same equivariant bundle on $Z \times_{(X,P)} Z'$ of degree $|G'| \text{\rm deg}(g^* E) = |G| \text{\rm deg}(\mathcal{E}')$.
\end{remark}

\begin{proposition}\label{prop_slopes_coincide}
For any $E \in \Vect(X,P)$, we have
\[
\text{\rm deg}^{\text{\rm eq}}_{(X,P)}(E) = \text{\rm deg}_P(E) \hspace{.5em} \, \text{\rm and} \, \hspace{.5em} \mu^{\text{\rm eq}}_{(X,P)}(E) = \mu_P(E).
\]
\end{proposition}

\begin{proof}
This is immediate by applying Lemma~\ref{lem_pullback_div_line_bundles} to the representable cover $g \colon Z \longrightarrow (X,P)$.
\end{proof}

Using the above notion of slope, we can define the notion of $P$-(semi/poly)stability.

\begin{definition}\label{def_stability_condions_stacky_curve}
A bundle $E$ on $(X,P)$ is called $P$-(semi)stable if for any sub-bundle $0 \neq F \subset E$ in $\Vect(X,P)$, we have
$$\mu_{P}(F) \hspace{.2cm} ( \leq ) \hspace{.2cm} \mu_{P}(E).$$
As in the case of curves, the notation $(\leq)$ means that $E$ is $P$-semistable if we have $\leq$, and it is $P$-stable if we have the strict inequality $<$.

A $P$-semistable bundle $E$ is called $P$-\textit{polystable} if $E = \oplus E_i$, a finite sum, where for each $i$, the bundle $E_i$ is $P$-stable satisfying $\mu_P(E_i) = \mu_P(E)$.
\end{definition}

We keep the following notion and important properties of equivariant slope stability conditions as a separate remark for our later use.

\begin{remark}\label{rmk_G-stability_vs_stability}
Let $\mathcal{E}$ be a $G$-equivariant bundle on $Z$. Recall that $\mathcal{E}$ on $Z$ is $G$-(semi)stable if for any $G$-equivariant sub-bundle $\mathcal{F} \subset \mathcal{E}$, we have $\mu(\mathcal{F}) \, ( \leq ) \, \mu(\mathcal{E})$; a $G$-polystable bundle on $Z$ is a $G$-semistable bundle that is a finite sum of $G$-stable bundles having the same slope.

The bundle $\mathcal{E}$ is $G$-semistable if and only if $\mathcal{E}$ is a semistable in the usual sense (for example, see \cite[Lemma~2.7]{B}; this follow from the uniqueness of the Harder-Narasimhan filtration). From the definition, it is clear that if $\mathcal{E}$ is stable in the usual sense, it is also $G$-stable. Whereas, a $G$-stable bundle need not be stable in the usual sense -- suppose that there is an irreducible $k[G]$-module $V$ of dimension $\geq 2$, and consider the $G$-equivariant trivial bundle $\mathcal{O}_Z \otimes_k V$ equipped with the diagonal $G$-action. This is a $G$-stable bundle that is not stable. When $G$ is a non-abelian finite group, such a $V$ always exists.

We further note that $\mathcal{E}$ is $G$-polystable if and only if it is polystable in the usual sense. By the uniqueness of the the socle of a semistable bundle, it follows that a $G$-polystable bundle $\mathcal{E}$ on $Z$ is polystable in the usual sense. Conversely, if $\mathcal{E}$ is polystable, it can be written uniquely as
\[
\mathcal{E} = \oplus_{1 \leq i \leq l} \, \mathcal{E}_i \otimes_k \mathrm{Hom}(\mathcal{E}_i, \mathcal{E}),
\]
where $\mathcal{E}_i$ are mutually non-isomorphic stable bundles on $Z$, each of slope $\mu(\mathcal{E})$. In particular, each $k$-vector space $\mathrm{Hom}(\mathcal{E}_i, \mathcal{E})$ is a $k[G]$-module. Then it follows that any $\Gamma$-stable sub-bundle $\mathcal{F}$ of $\mathcal{E}$ with $\mu(\mathcal{F}) = \mu(\mathcal{E})$ is of the form
\[
\mathcal{F} = \oplus_{i \in I} \, \mathcal{E}_i \otimes_k \mathrm{Hom}(\mathcal{E}_i, \mathcal{F})
\]
for some subset $I \subset \{1, \ldots, l\}$, and where $\mathrm{Hom}(\mathcal{E}_i, \mathcal{F})$ is a $G$-invariant sub-module of $\mathrm{Hom}(\mathcal{E}_i, \mathcal{E})$. Thus, every $G$-stable sub-bundle of $\mathcal{E}$ having the same slope as $\mathcal{E}$ is a $G$-invariant direct summand. This shows that every polystable $G$-equivariant bundle on $Z$ is $G$-polystable. \qed
\end{remark}

We have the following observation on the properties of the stability conditions under a cover $(X,Q) \xlongrightarrow{j} (X,P)$ induced by two branch data $P$ and $Q$ on $X$, satisfying $Q \geq P$.

\begin{remark}\label{rmk_iota_preserve}
Let $E \in \Vect(X,P)$. Using Proposition~\ref{prop_slopes_coincide}, we conclude that
\begin{itemize}[leftmargin=*]
    \item $E$ is $P$-(semi)stable if and only if the $G$-equivariant bundle $g^* E$ on $Z$ is $G$-(semi)stable;
    \item $E$ is $P$-polystable if and only if $g^* E$ is $G$-polystable.
\end{itemize}
In particular, this implies that the embedding $j^*$ in~\eqref{eq_inclusion_for_vect_on_stacky_curves} preserves slope stability conditions; namely, for any $E \in \Vect(X,P)$, the pullback bundle $j^* E$ is $Q$-(semi)stable (respectively, $Q$-polystable) if and only if $E$ is $P$-(semi)stable (respectively, $P$-polystable). In fact, this holds for any two branch data $Q \geq P$ (where $Q$ is not necessarily geometric) as we can further choose a geometric branch data $Q' \geq Q$ by Remark~\ref{rmk_geometric_results}\eqref{geo:1}. \qed
\end{remark}

Let $E \in \text{\rm Vect}(X,P)$. The $G$-equivariant bundle $g^*E$ (where $g = j \circ u$ as in Notation~\ref{not_geometric_notation}) has rank $\text{\rm rk}(E)$ and degree $|G| \text{\rm deg}_P(E)$ by Lemma~\ref{lem_pullback_div_line_bundles}. By the Riemann-Roch Theorem over $Z$, we have
$$\mu(g^*E) - (g_Z -1) = \frac{\chi(g^*E)}{\text{\rm rk}(E)} \leq \frac{h^0(Z, g^*E)}{\text{\rm rk}(E)} \leq h^0(Z, g^*E)$$
where $g_Z$ is the genus of $Z$, and $\chi$ denote the Euler characteristic. Then
$$\mu_P(E) = \frac{\mu(g^*E)}{|G|} \leq \frac{h^0(Z, g^*E) + (g_Z -1)}{|G|}.$$
In particular, the $P$-slope $\mu_P(F)$ for any sub-bundle $F$ of $E$ is bounded above, and there exists a maximal possible slope. A sub-bundle $E_1$ of $E$ of the maximal $P$-slope with the maximal possible rank satisfies the following property: for any sub-bundle $F \subseteq E$, we have $\mu_P(E_1) \geq \mu_P(F)$; when $\mu_P(E_1) = \mu_P(F)$, we have $F \subset E_1$ (the argument is the same as the proof of \cite[Lemma~1.3.5, Theorem~1.6.7]{HL}). Such a sub-bundle $E_1$ of $E$ is $P$-semistable and unique, called the \textit{maximal destabilizing sub-bundle} of $E$. In view of our previous discussion, Remark~\ref{rmk_iota_preserve}, and the analogous arguments as in the classical case of curves (cf. \cite[Theorem~1.6.7]{HL}) produces the following useful properties.

\begin{proposition}\label{prop_properties_stacky}
Suppose that Notation~\ref{not_geometric_notation} hold. For any $E \in \text{\rm Vect}(X,P)$ and $E' \in \text{\rm Vect}(X,Q)$, we have the following properties.
\begin{enumerate}[leftmargin=*]
\item The maximal destabilizing sub-bundle $\text{\rm HN}(E)_1$ exists, and it has the following property: for any sub-bundle $F \subseteq E$, we have $\mu_P(\text{\rm HN}(E)_1) \geq \mu_P(F)$; when $\mu_P(\text{\rm HN}(E)_1) = \mu_P(F)$, we have $F \subset \text{\rm HN}(E)_1$. The sub-bundle $\text{\rm HN}(E)_1$ of $E$ is $P$-semistable and unique. Define $\mu_{P, \text{\rm max}}(E) \coloneqq \mu_P(\text{\rm HN}(E)_1)$.\label{H:1}
\item (Harder-Narasimhan filtration) There is a unique filtration
$$0 = \text{\rm HN}(E)_0 \subset \text{\rm HN}(E)_1 \subset \cdots \subset \text{\rm HN}(E)_l = E$$
such that $\text{\rm HN}(E)_i/\text{\rm HN}(E)_{i-1}$ are $P$-semistable and their $P$-slopes satisfy
$$\mu_{P,\text{\rm max}}(E) \coloneqq \mu_P(\text{\rm HN}(E)_1) > \cdots > \mu_P(E/\text{\rm HN}(E)_{l-1}).$$\label{H:2}
\item The rational number $\mu_{P, \text{\rm max}}^{\rm eq}(E) \coloneqq \frac{1}{|G|} \mu_{\text{\rm max}}(g^*E)$ is independent of the choice of the cover $g_0 \colon Z \longrightarrow X$.\label{H:3}
\item Under the equivalence $u^* \colon \text{\rm Vect}(X,Q) \overset{\sim}\longrightarrow \text{\rm Vect}^G(Z)$, the Harder-Narasimhan filtration for $E'$ uniquely corresponds to the Harder-Narasimhan filtration for $u^*E'$. In particular, $u^* \text{\rm HN}(E')_1 \cong \text{\rm HN}(u^*E')_1$, and $\mu_{Q, \text{\rm max}}^{\rm eq}(E') = \mu_{Q, \text{\rm max}}(E')$.\label{H:4}
\item If $\text{\rm Hom}_{\Vect(X,P)}(E , F)$ is non-trivial for some $F \in \Vect(X,P)$, we have $\mu_{P,\text{\rm max}}(E) \leq \mu_{P, \text{\rm max}}(F)$.\label{H:5}
\item If $L$ is a line bundle on $(X,P)$, the tensor product $E \otimes L$ is $P$-(semi)stable if and only if $E$ is $P$-(semi)stable.\label{H:6}
\item (Jordan-H\"{o}lder filtration) If $E$ is $P$-semistable, there exists a filtration
$$0 = E^{(0)} \subset E^{(1)} \subset \cdots \subset E^{(l-1)} \subset E^{(l)}$$
such that each $E^{(i)}/E^{(i-1)}$ is $P$-stable, having the same $P$-slope $\mu_P(E)$.\label{H:7}
\item (Socle) If $E$ is $P$-semistable, there is a maximal unique polystable sub-bundle $\mathcal{S}(E)$ of $E$ with $\mu_P(\mathcal{S}(E)) = \mu_P(E)$. We call $\mathcal{S}(E)$ the socle of $E$.\label{H:S}
\end{enumerate}
\end{proposition}

In Remark~\ref{rmk_iota_preserve}, we considered the nature of the slope stability properties under a non-representable cover. In the following, we note the properties of the slope stability under representable covers --- for this, we no longer assume Notation~\ref{not_geometric_notation}.

\begin{proposition}\label{prop_properties_orbi_cover}
Let $f \colon (Y,Q) \longrightarrow (X,P)$ be a cover of connected orbifold curves. This induces a cover $f_0 \colon Y \longrightarrow X$. Let $E \in \text{\rm Vect}(X,P)$. Then the following hold.\label{s:b}
\begin{enumerate}
\item If $f_0$ is a Galois cover, and $Q = f_0^*P$, then the Harder-Narasimhan filtration of $f^*E$ is the one obtained by applying $f^*$ to that of the bundle $E$. In particular,
\[
\mu_{Q, \text{\rm max}}( f^*E) = \text{\rm deg}(f_0) \cdot \mu_{P, \text{\rm max}}(E)\label{s:b1}
\]
\item $E$ is $P$-semistable if and only if $f^*E \in \Vect(Y,Q)$ is $Q$-semistable.\label{s:b2}
\item If $f^*E \in \Vect(Y,Q)$ is $Q$-stable, then $E$ is $P$-stable.\label{s:b3}
\item If $f_0$ is Galois, and $E$ is either $P$-stable or $P$-polystable, then $f^*E \in \Vect(Y,Q)$ is $Q$-polystable.\label{s:b4}
\item If $f_0$ is Galois, $f$ is an \'{e}tale cover, and $f^*E \in \Vect(Y,f_0^*P)$ is $f_0^*P$-polystable, then $E$ is $P$-polystable.\label{s:b5}
\item If $f_0$ is Galois, and $f$ is an \'{e}tale cover, then for any $f_0^*P$-polystable bundle $F \in \Vect(Y,f_0^*P)$, the pushforward bundle $f_*F$ is $P$-polystable.\label{s:b6}
\end{enumerate}
\end{proposition}

\begin{proof}
\eqref{s:b1} is a consequence of Lemma~\ref{lem_push_exactness_rep_cover}. For the rest of the statements, without loss of generality (using Remark~\ref{rmk_iota_preserve}), we assume that $Q = f_0^*P$.

Statement~\eqref{s:b3} and the reverse direction of~\eqref{s:b2} follow directly from the definition of slope stability. Now we prove the forward direction of~\eqref{s:b2}. First, consider the Galois closure
\[
\bar{f}_0 \colon \bar{Y} \xlongrightarrow{\hat{f}_0} Y \xlongrightarrow{f} X
\]
of the cover $f$. This produces the induced representable cover
\[
\bar{f} \colon (\bar{Y}, \bar{f}_0^*P) = (\bar{Y}, \hat{f}_0^*f_0^*P) \longrightarrow (X,P).
\]
By Lemma~\ref{lem_push_exactness_rep_cover}, the maximal destabilizing sub-bundle $\text{\rm HN}(\bar{f}^*E)_1$ of $\bar{f}^*E$ is the pullback of a $P$-semistable sub-bundle $E_1$ of $E$ with $\mu_P(E_1) \geq \mu_P(E)$. Since $E$ is assumed to be $P$-semistable, we have $E_1 = E$, implying that $\bar{f}^*E$ is $\hat{f}_0^*f_0^*P$-semistable. Now by the reverse direction of~\eqref{s:b2} applied to the cover $\bar{f}$, the required statement for the cover $f$ follows.

Now suppose that $f_0$ is $\Gamma$-Galois, and that $E$ is $P$-stable. Then the $\Gamma$-invariant bundle $f^*E$ is $f_0^*P$-semistable by~\eqref{s:b2}. Using Proposition~\ref{prop_properties_stacky}~\eqref{H:5}, we conclude that the socle $\mathcal{S}(f^*E)$ is $\Gamma$-invariant. By Lemma~\ref{lem_push_exactness_rep_cover}~\eqref{ds}, we obtain a $P$-semistable sub-bundle $F \subseteq E$ such that $f^*F \cong \mathcal{S}(f^*E)$. Consequently, $\mu_P(F) = \mu_P(E)$. As $E$ was $P$-stable, we have $F = E$. So $f^*E \cong \mathcal{S}(F^*E) = f^*E$, showing that $f^*E$ is $f_0^*P$-polystable. Furthermore, when $E$ is $P$-polystable, $E$ is direct sum of $P$-stable sub-bundles of the same $P$-slope. As $f^*$ preserves a direct sum, we see that $f^*E$ is a direct sum of $f_0^*P$-polystable sub-bundles of the same $f_0^*P$-slope. Thus, $f^*E$ is also $f_0^*P$-polystable in this case. This proves~\eqref{s:b4}.

By Remark~\ref{rmk_iota_preserve} and Remark~\ref{rmk_natural_atlas_on_pullback}~\eqref{rmk:2}, it is enough to prove the statement~\ref{s:b5} when $(X,P) = [Z/G]$ is a geometric orbifold curve. Let $u \colon Z \longrightarrow (X,P)$ be the natural atlas. Taking a dominant connected component $W$ in the normalization of $Y \times_X Z$, we obtain the following commutative diagram:
\begin{equation*}
    \begin{tikzcd}
        W \arrow[r, "g"] \arrow[d, "v"] \arrow[dr, phantom, "\#"] & Z \arrow[d, "u"] \\
        (Y,f_0^*P) \arrow[r, "f"] & (X,P).
    \end{tikzcd}
\end{equation*}
Here $g$ is an \'{e}tale Galois cover of smooth projective connected $k$-curves with Galois group $N$, say, and $v$ is an atlas. The induced map $W \longrightarrow X$ is Galois, with group $\Gamma$, say. In particular, $N$ is a normal sub-group of $\Gamma$ with quotient $G$. Since $f^*E$ is $f_0^*P$-polystable, the $\Gamma$-equivariant bundle $v^*f^*E \cong g^*u^*E$ is polystable by Remark~\ref{rmk_G-stability_vs_stability}. Again by the same result, $g^*u^*E$ is $N$-polystable. As $g$ is an $N$-Galois \'{e}tale cover, the $G$-equivariant bundle $u^*E$ is $G$-polystable, or equivalently, $E$ is $P$-polystable. This completes the proof of~\eqref{s:b5}.

Now we prove~\ref{s:b6}. Since $f_*$ is left exact, it is an additive functor; hence, it is enough to prove the statement when $F$ is a $f_0^*P$-stable bundle. We have the following Cartesian square:
\begin{equation*}
\begin{tikzcd}
(Y,f_0^*P) \times_{(X,P)} (Y,f_0^*P) \arrow[r, swap, "\text{\rm pr}_2"] \arrow[dr, phantom, "\square"] \arrow[d, "\text{\rm pr}_1"] & (Y,f_0^*P) \arrow[d, "f"] \\
(Y,f_0^*P) \arrow[r, swap, "f"] & (X,P)
\end{tikzcd}    
\end{equation*}
Since $f$ is Galois \'{e}tale with Galois group $\Gamma$, say, the fiber product stack $(Y,f_0^*P) \times_{(X,P)} (Y,f_0^*P)$ is a disjoint union of the orbifold curve $(Y, f_0^*P)$, parameterized by $\gamma \in \Gamma$. Using the base change theorem--Proposition~\ref{prop_proj_bc}, we have
\[
f^*f_* F \cong \text{\rm pr}_1^* \text{\rm pr}_{2,*} F \cong \oplus_{\substack{\gamma \in \Gamma}} \, \gamma^*F.
\]
Since $F$ is $f_0^*P$-stable, the bundle $f^*f_* F$ is $f_0^*P$-polystable. From~\eqref{s:b5} we conclude that $f_*F$ is $P$-polystable.
\end{proof}

We end this section with the following observation that over $k = \mathbb{C}$, our notion of slope stability coincides with the parabolic slope stability.

\begin{remark}\label{rmk_parabolic_slope_same_as_P_slope}
When $k = \mathbb{C}$, an orbifold curve is determined by a finite set $B \subset X$ of closed points and a positive integers $n_x$ for each $x \in B$. Let $D = \sum_{\substack{x \in B}} x \in \Div(X)$. By \cite[Proposition~5.15]{KM}, there is an equivalence of categories
\begin{equation}\label{eq_13}
\Vect(X,P) \overset{\sim} \longrightarrow \Vect_{\text{\rm par, rat}}(X,D)  
\end{equation}
where $\Vect_{\text{\rm par, rat}}(X,D)$ is the category of parabolic vector bundles on $X$ with respect to the divisor $D$, and over each $x \in B$, the weights are of the form $a/n_x$, \, $0 \leq a < n_x$.

There exists a connected $G$-Galois cover $g_0 \colon Z \longrightarrow X$ of smooth projective connected $k$-curves that is branched over the set $B$, and for each point $x \in B$, the integer $n_x$ divides the ramification index at any point $z \in g_0^{-1}(x)$. By \cite[Section~3, Equation~3.12]{B}, \cite[Lemma~4.4]{P}, for each parabolic vector bundle $V_* \in \Vect_{\text{\rm par, rat}}(X,D)$, there is a unique $G$-equivariant bundle $\hat{V} \in \Vect^G(Z)$, and
$$\mu(\hat{V}) = |G|\mu_{\text{\rm para}}(V_*)$$
where $\mu$ is the usual slope for vector bundles on $Z$ and $\mu_{\text{\rm para}}$ is the parabolic slope. Moreover, the association $V_* \mapsto \hat{V}$ preserves the respective slope stability.

Using Remark~\ref{rmk_iota_preserve} and Proposition~\ref{prop_slopes_coincide}, we see that under the equivalence~\eqref{eq_13}, the parabolic slope is the same as $P$-slope, and parabolic slope stability conditions are the same as $P$-stability conditions.
\end{remark}

\subsection{Pushforward of the Structure Sheaf}\label{sec_pushforward}
The purpose of this section is to relate the maximal destabilizing sub-bundle of the pushforward of the structure sheaf with the maximal \'{e}tale sub-cover of a cover of orbifold curves. More precisely, let $f \colon (Y,Q) \longrightarrow (X,P)$ be a cover of connected orbifold curves. We will show that the maximal destabilizing sub-bundle $\text{\rm HN}(f_* \mathcal{O}_{(Y,Q)})_1$ of $f_* \mathcal{O}_{(Y,Q)}$ is a bundle on $(X,P)$ that is an $\mathcal{O}_{(X,P)}$-algebra, it is $P$-semistable of $P$-degree $0$, and the maximal \'{e}tale sub-cover of $f$ is the cover associated to $\text{\rm HN}(f_* \mathcal{O}_{(Y,Q)})_1$. This is shown in \cite{BP} for the cover of curves. We start with the following observations.

\begin{lemma}\label{lem_push_bundle}
Let $f_0 \colon Y \longrightarrow X$ be a cover of smooth projective connected $k$-curves. Let $P$ be a branch data on $X$. Consider the induced cover $f \colon (Y, f_0^*P) \longrightarrow (X,P)$. For any $E \in \text{\rm Vect}(Y, f_0^*P)$, we have
$$\mu_{P, \text{\rm max}}\left( f_*E \right) \leq \frac{\mu_{f_0^*P, \text{\rm max}}\left( E \right)}{\text{\rm deg}\left( f_0 \right)}.$$
\end{lemma}

\begin{proof}
First, suppose that $E$ is $f_0^*P$-semistable. Any sub-bundle $F$ of $f_*E$ uniquely corresponds to a non-zero morphism $f^*F \longrightarrow E$ via the $(f^*,f_*)$-adjunction in \cite[Proposition~9.3.6, page 205]{Olsson}. So for any $P$-semistable sub-bundle $F$ of $f_*E$, by Lemma~\ref{lem_pullback_div_line_bundles} and Proposition~\ref{prop_properties_stacky}\eqref{H:5}, we have
$$\text{\rm deg}(f_0) \cdot \mu_P(F) = \mu_{f_0^*P}(f^*F) \leq \mu_{f_0^*P}(E).$$
In particular,
$$\mu_{P, \text{\rm max}}(f_*E) = \mu_P(\text{\rm HN}(f_*E)_1) \leq \frac{\mu_{f_0^*P}(E)}{\text{\rm deg}(f_0)}.$$

For an arbitrary $E \in \text{\rm Vect}(X,P)$, consider the Harder-Narasimhan filtration in Proposition~\ref{prop_properties_stacky}\eqref{H:2}
$$0 = E_0 \subset E_1 \subset \cdots \subset E_l = E;$$
so each $E_{i+1}/E_i$ is $f_0^*P$-semistable, and $\mu_{f_0^*P}(E_{i+1}/E_i) \leq \mu_{f_0^*P, \text{\rm max}}(E) = \mu_{f_0^*P}(E_1)$.
Since $f_*$ is an exact functor by Lemma~\ref{lem_push_exactness_rep_cover}, the above produces a filtration
$$0 \subset f_*E_1 \subset \cdots \subset f_*E,$$
and $f_*(E_{i+1}/E_i) \cong f_*E_{i+1}/f_*E_i$. Applying the statement for each $f_0^*P$-semistable bundle $E_{i+1}/E_i$, we have
$$\mu_{P, \text{\rm max}}\left( f_*(E_{i+1}/E_i) \right) \leq \frac{\mu_{f_0^*P}\left( E_{i+1}/E_i \right)}{\text{\rm deg}\left( f_0 \right)}.$$
As each $\mu_{f_0^*P}\left( E_{i+1}/E_i \right) \leq \mu_{f_0^*P, \text{\rm max}}(E)$, we have
$$\mu_{P, \text{\rm max}}(f_*E) \leq \text{\rm max}_{1 \leq i \leq l} \left\lbrace \mu_{P, \text{\rm max}}\left( f_*E_{i+1}/f_*E_i \right) \right\rbrace \leq \mu_{f_0^*P, \text{\rm max}}(E)/\text{\rm deg}\left( f_0 \right).$$
\end{proof}

\begin{lemma}\label{lem_etale}
Under the hypotheses of Lemma~\ref{lem_push_bundle}, $\mu_{P, \text{\rm max}}(f_* \mathcal{O}_{(Y,f_0^*P)}) = 0$. Moreover, the following are equivalent.
\begin{enumerate}
\item The cover $f$ is an \'{e}tale cover.\label{et:1}
\item $f_* \mathcal{O}_{(Y,f_0^*P)}$ has $P$-degree $0$.\label{et:2}
\item $f_* \mathcal{O}_{(Y,f_0^*P)}$ is $P$-semistable.\label{et:3}
\end{enumerate}
\end{lemma}

\begin{proof}
We have a canonical inclusion $\mathcal{O}_{(X,P)} \hookrightarrow f_* \mathcal{O}_{(Y,f_0^*P)}$ of bundles. Then  Proposition~\ref{prop_properties_stacky}\eqref{H:5} implies that
$$\mu_{P, \text{\rm max}}(f_* \mathcal{O}_{(Y,f_0^*P)}) \geq 0.$$
This is an equality by Lemma~\ref{lem_push_bundle}.

Let $R$ be the ramification divisor for the cover $f$, defined in~\eqref{ramification_divisor}. Set $B \coloneqq f_*R$. By Lemma~\ref{lem_push_div_line_rep}\eqref{fin:4}, we have $\text{\rm det}(f_* \mathcal{O}_{(Y,f_0^*P)})^{\otimes 2} \cong \mathcal{O}_{(X,P)}(-B)$. Since $\mu_{P, \text{\rm max}}(f_* \mathcal{O}_{(Y,f_0^*P)}) = 0$, the equivalence of the statements follow.
\end{proof}

Recall from \cite[Section~10.2, page 210]{Olsson} that to an algebra $\mathcal{A}$ of $\mathcal{O}_{(X,P)}$-modules, there is an associated algebraic stack $\underline{\text{\rm Spec}}(\mathcal{A})$ together with a representable affine morphism $\pi \colon \underline{\text{\rm Spec}}(\mathcal{A}) \longrightarrow (X,P)$. For any $k$-scheme $T$, the objects of the category $\underline{\text{\rm Spec}}(\mathcal{A})(T)$ are pairs $(x,\rho)$ where $x \in (X,P)(T)$, and $\rho \colon x^*\mathcal{A} \longrightarrow \mathcal{O}_T$ is a morphism of sheaves of algebras on $T$. Also, for $x \in (X,P)(T)$, we have $\underline{\text{\rm Spec}}(\mathcal{A}) \times_{(X,P)} T \cong \underline{\text{\rm Spec}}(x^*\mathcal{A})$. Thus, if $\mathcal{A}$ is also a vector bundle on $(X,P)$, the morphism $\pi$ is a cover of orbifold curves. This way, a representable cover $f \colon (Y, f_0^*P) \longrightarrow (X,P)$ of orbifold curves corresponds to the $\mathcal{O}_{(X,P)}$-algebra $f_*\mathcal{O}_{(Y,f_0^*P)}$, and we have $(Y, f_0^*P) = \underline{\text{\rm Spec}}\left( f_*\mathcal{O}_{(Y,f_0^*P)} \right)$.

\begin{lemma}\label{lem_algebra}
Under the hypothesis of Lemma~\ref{lem_push_bundle}, consider the maximal destabilizing sub-bundle $\text{\rm HN}(f_* \mathcal{O}_{(Y,f_0^*P)})_1$ of $f_* \mathcal{O}_{(Y,f_0^*P)}$. The natural inclusion $\mathcal{O}_{(X,P)} \hookrightarrow f_* \mathcal{O}_{(Y,f_0^*P)}$ of bundles factors via the inclusion $\mathcal{O}_{(X,P)} \hookrightarrow \text{\rm HN}(f_* \mathcal{O}_{(Y,f_0^*P)})_1$ that equips the $P$-semistable bundle $\text{\rm HN}(f_* \mathcal{O}_{(Y,f_0^*P)})_1$ of $P$-degree $0$ with a structure of an $\mathcal{O}_{(X,P)}$-algebra.
\end{lemma}

\begin{proof}
By Lemma~\ref{lem_etale}, we know that $V \coloneqq \text{\rm HN}(f_* \mathcal{O}_{(Y,f_0^*P)})_1$ has $P$-degree $0$. We only need to show that $V$ is an $\mathcal{O}_{(X,P)}$-algebra; then by loc. cit. equivalences, $\underline{\text{\rm Spec}}(V) \longrightarrow (X,P)$ is an \'{e}tale cover via which $f$ factors, implying that the inclusion $\mathcal{O}_{(X,P)} \hookrightarrow f_* \mathcal{O}_{(Y,f_0^*P)}$ of bundles factors via the inclusion $\mathcal{O}_{(X,P)} \hookrightarrow V$, and $V$ is $P$-semistable.

Consider the $\mathcal{O}_{(X,P)}$-algebra homomorphism
$$\phi \colon f_* \mathcal{O}_{(Y,f_0^*P)} \otimes_{\mathcal{O}_{(X,P)}} f_* \mathcal{O}_{(Y,f_0^*P)} \longrightarrow f_* \mathcal{O}_{(Y,f_0^*P)}.$$
Set $W \coloneqq V \otimes V$. We want to show that $\phi(W) \subset V$.

We claim that $W$ is $P$-semistable of $P$-degree $0$. Since $\mu_{P, \text{\rm max}}(f_* \mathcal{O}_{(Y,f_0^*P)}/V) < 0$, by Proposition~\ref{prop_properties_stacky}\eqref{H:4}, this will imply that there is no non-zero homomorphism from $W$ to $f_* \mathcal{O}_{(Y,f_0^*P)}/V$, and hence $\phi(W) \subset V$.

Since $\mu_P(V) = 0$, we already have $\mu_P(W) = 2 \mu_p(V) = 0$. So we want to show that $W$ is $P$-semistable. It is enough to show that $W$ does not contain any non-zero sub-bundle of positive $P$-degree. Further, $W \subset f_* \mathcal{O}_{(Y,f_0^*P)} \otimes f_* \mathcal{O}_{(Y,f_0^*P)}$, and hence it is enough to show that $f_* \mathcal{O}_{(Y,f_0^*P)} \otimes f_* \mathcal{O}_{(Y,f_0^*P)}$ does not contain any sub-bundle of positive $P$-degree. We claim that $\mu_{P, \text{\rm max}}(f_* \mathcal{O}_{(Y,f_0^*P)} \otimes f_* \mathcal{O}_{(Y,f_0^*P)}) = 0$. This will conclude the proof.

Since $\phi$ is a non-zero homomorphism,
$$\mu_{P, \text{\rm max}}(f_* \mathcal{O}_{(Y,f_0^*P)} \otimes f_* \mathcal{O}_{(Y,f_0^*P)}) \leq \mu_P(V) = 0$$
by Proposition~\ref{prop_properties_stacky}\eqref{H:4}. By Proposition~\ref{prop_proj_bc}, we have
$$f_* \mathcal{O}_{(Y,f_0^*P)} \otimes f_* \mathcal{O}_{(Y,f_0^*P)} \cong f_*f^*f_* \mathcal{O}_{(Y,f_0^*P)}.$$
So by Lemma~\ref{lem_push_div_line_rep}\eqref{fin:4} and Proposition~\ref{prop_properties_stacky}\eqref{H:4},
$$\mu_{P, \text{\rm max}}(f_*f^*f_* \mathcal{O}_{(Y,f_0^*P)}) \leq \mu_{f_0^*P, \text{\rm max}}(f^*f_* \mathcal{O}_{(Y,f_0^*P)}) = \text{\rm deg}(f_0) \cdot \mu_P(V) = 0.$$ 
\end{proof}

We are ready to state the main result of this section. Any cover $f \colon (Y,Q) \longrightarrow (X,P)$ induces a homomorphism of the respective \'{e}tale fundamental groups, and the image of $\pi_1(Y,Q)$ in $\pi_1(X,P)$ is a sub-group of finite index, defining an intermediate sub-cover $(X',P') \longrightarrow (X,P)$ that is the maximal \'{e}tale sub-cover of $f$; see \cite[Proposition~2.42]{KP}. We show that $(X',P')$ coincides with $\underline{\text{\rm Spec}}\left( \text{\rm HN}( f_* \mathcal{O}_{(Y,Q)} )_1 \right)$.

\begin{proposition}\label{prop_maximal_etal_subcover}
Let $f \colon (Y,Q) \longrightarrow (X,P)$ be a cover of connected orbifold curves. Denote the induced cover on the Coarse moduli curves by $f_0 \colon Y \longrightarrow X$. Then there is the maximal sub-cover $g_0 \colon \hat{X} \longrightarrow X$ of $f_0$ that is an is essentially \'{e}tale cover of $(X,P)$, and $(\hat{X}, g_0^*P) \coloneqq \underline{\text{\rm Spec}}\left( \text{\rm HN}( f_* \mathcal{O}_{(Y,Q)} )_1 \right)$. Further, $f$ factors as a composition
$$f \colon (Y,Q) \longrightarrow (Y, f_0^*P) \overset{\hat{g}} \longrightarrow (\hat{X}, g_0^*P) = \underline{\text{\rm Spec}}\left( \text{\rm HN}( f_* \mathcal{O}_{(Y,Q)} )_1 \right) \overset{g} \longrightarrow (X,P)$$
where $g$ is the maximal \'{e}tale sub-cover of both $f$ and $(Y,f_0^*P) \longrightarrow (X,P)$.
\end{proposition}

\begin{proof}
By \cite[Proposition~2.42]{KP}, there is the maximal sub-cover $(\hat{X}, g_0^*P) \longrightarrow (X,P)$ of $f$:
$$f \colon (Y,Q) \longrightarrow (Y, f_0^*P) \overset{\hat{g}} \longrightarrow (\hat{X}, g_0^*P) \overset{g} \longrightarrow (X,P).$$
We need to prove that $\underline{\text{\rm Spec}}\left( \text{\rm HN}\left( f_* \mathcal{O}_{(Y,Q)} \right)_1 \right) = (\hat{X}, g_0^*P)$. By Lemma~\ref{lem_etale}, $g_* \mathcal{O}_{(\hat{X},g_0^*P)}$ is a $P$-semistable bundle of $P$-degree zero. Also since $g_* \mathcal{O}_{(\hat{X},g_0^*P)} \subset f_* \mathcal{O}_{(Y,Q)} = (g \circ \hat{g})_* \mathcal{O}_{(Y,f_0^*P)}$, by Proposition~\ref{prop_properties_stacky}\eqref{H:1}, we have $g_* \mathcal{O}_{(\hat{X},g_0^*P)} \subset \text{\rm HN}\left( f_* \mathcal{O}_{(Y,Q)} \right)_1$. By \cite[Theorem~10.2.4, pg. 212]{Olsson}, we obtain a composition of covers
$$\underline{\text{\rm Spec}}\left( \text{\rm HN}\left( f_* \mathcal{O}_{(Y,Q)} \right)_1 \right) \longrightarrow \underline{\text{\rm Spec}}\left( g_* \mathcal{O}_{(\hat{X},g_0^*P)} \right) = (\hat{X}, g_0^*P) \longrightarrow (X,P)$$
of orbifold curves, and the composition is an \'{e}tale cover by Lemma~\ref{lem_etale}. Since $g$ is the maximal cover with this property, we have $(\hat{X},g_0^*P) = \underline{\text{\rm Spec}}\left( \text{\rm HN}\left( f_* \mathcal{O}_{(Y,Q)} \right)_1 \right)$.
\end{proof}

\begin{remark}\label{rmk_factorization_discussion}
Following \cite[Proposition~2.42]{KP}, we also see that the homomorphism
$$f_* \colon \pi_1(Y,Q) \longrightarrow \pi_1(X,P)$$
of the \'{e}tale fundamental groups, induced by the cover $f$, factors as a composition of a surjection $\pi_1(Y,Q) \twoheadrightarrow \pi_1(\hat{X}, g_0^*P)$, induced by the cover $(Y,Q) \longrightarrow (\hat{X}, g_0^*P)$, followed by the injection $g_* \colon \pi_1(\hat{X}, g_0^*P) \hookrightarrow \pi_1(X,P)$, induced by the \'{e}tale cover $g$.
\end{remark}

\begin{remark}
(Higher dimensional aspect) We have shown that a cover of orbifold curves has a maximal representable sub-cover, and the later cover admits a further maximal \'{e}tale sub-cover. In stead, one can consider smooth proper DM stacks admitting smooth proper Coarse moduli schemes. Then defining a cover as a finite flat surjective morphism of stacks, one can argue that every cover has a maximal representable sub-cover which further admits a maximal \'{e}tale sub-cover. Since we do not have a well equipped notion of slope stability for bundles on DM stacks in general, we are unable to relate this to the theory of bundles in this general set up; although, for the projective DM stacks, it seems feasible to construct such a correspondence using the generating sheaves and the slope stability condition defined by them as in \cite{Nironi}.
\end{remark}

We conclude this section with the following useful result.

\begin{lemma}\label{lem_cohomology_vanishing}
Under the notation of Proposition~\ref{prop_maximal_etal_subcover}, set $V \coloneqq \text{\rm HN}\left( f_*\mathcal{O}_{(Y,f_0^*P)} \right)_1$, and $W \coloneqq f_*\mathcal{O}_{(Y,f_0^*P)}/V$. Then the following hold.
\begin{enumerate}
    \item $\mu_{P, \text{\rm max}}(W) < 0$;
    \item $\mathrm{H}^0((Y,f_0^*P), f^*f_*\mathcal{O}_{(Y,f_0^*P)}) \cong \mathrm{H}^0((Y, f_0^*P), f^*V)$.
\end{enumerate}
\end{lemma}

\begin{proof}
We note that
\[
\mu_{P,\text{\rm max}}(W) = \mu_P(\text{\rm HN}(W)_1) = \mu_P(\text{\rm HN}(f_* \mathcal{O}_{(Y,f_0^*P)})_2/V) < \mu_P(V) = 0,
\]
proving the first result. By Proposition~\ref{prop_properties_orbi_cover}~\eqref{s:b1},
\[
\mu_{f_0^*P, \text{\rm max}} \left( f^* W \right) = \text{\rm deg}(f_0) \cdot \mu_{P, \text{\rm max}}(W) < 0.
\]
Now, we have a short exact sequence
\[
0 \longrightarrow V \longrightarrow f_* \mathcal{O}_{(Y,f_0^*P)} \longrightarrow W \longrightarrow 0
\]
in $\text{\rm Vect}(X,P)$. Applying the exact functor $f^*$ produces a short exact sequence
\begin{equation}\label{eq_ses_f^*}
0 \longrightarrow f^* V \longrightarrow f^*f_* \mathcal{O}_{(Y,f_0^*P)} \longrightarrow  f^*f_* \mathcal{O}_{(Y,f_0^*P)}/f^* V \cong f^*W \longrightarrow 0
\end{equation}
in $\text{\rm Vect}(Y,f_0^*P)$. Suppose that there is a non-zero section in $\mathrm{H}^0((Y,f_0^*P), f^*W)$. This corresponds to a non-zero homomorphism $\mathcal{O}_{(Y,f_0^*P)} \longrightarrow f^*W$, which is not possible by Proposition~\ref{prop_properties_stacky}~\eqref{H:5}. Then the long exact sequence of cohomologies associated to the exact sequence~\eqref{eq_ses_f^*} shows that
\[
\mathrm{H}^0((Y,f_0^*P), f^* f_* \mathcal{O}_{(Y,f_0^*P)}) \cong \mathrm{H}^0((Y,f_0^*P), f^* V).
\]
\end{proof}

\section{Genuinely Ramified Covers}\label{sec_gen_ram}
\subsection{Equivalent condition}\label{sec_equivalence_gen_ram}
Historically, the notion of a genuinely ramified morphism arises in the study of covers of the projective line and in the construction of the Hurwitz spaces. Recent study in \cite{BP} on the slope stability of a bundle on smooth curves under finite covers shows another importance of such morphisms. Our objective in this section is to extend the definition of a genuinely ramified morphism from smooth curves to orbifold curves. Let us start by recalling the definition in the case of curves, following \cite{BP}.

Consider any non-trivial cover $f \colon Y \longrightarrow X$ of smooth projective connected $k$-curves. The pushforward sheaf $f_* \mathcal{O}_Y$ is a vector bundle on $X$; it is semistable if and only if $f$ is an \'{e}tale cover by \cite[Lemma~2.3]{BP}. The maximal destabilizing sub-bundle $\text{\rm HN}(f_*\mathcal{O}_Y)_1 \subset f_* \mathcal{O}_Y$ is of degree $0$, and the natural inclusion $\mathcal{O}_X \subset \text{\rm HN}\left( f_* \mathcal{O}_Y \right)_1$ of bundles on $X$ equips $\text{\rm HN}\left( f_* \mathcal{O}_Y \right)_1$ with a structure of $\mathcal{O}_X$-algebras. Moreover, the cover $f$ factors as a composition
\begin{equation}\label{eq_factor_curves_case}
f \colon Y \longrightarrow \hat{X} \coloneqq \underline{\Spec}\left( \text{\rm HN}(f_*\mathcal{O}_Y)_1 \right) \longrightarrow X
\end{equation}
where $\hat{X} \longrightarrow X$ is the maximal \'{e}tale sub-cover of $f$; see \cite[Lemma~2.4, Corollary~2.7]{BP}. The cover $f$ is said to be \textit{genuinely ramified} if $\text{\rm HN}\left( f_*\mathcal{O}_Y \right)_1$ is $\mathcal{O}_X$. This is equivalent to: the homomorphism between \'{e}tale fundamental groups $f_* \colon \pi_1(Y) \longrightarrow \pi_1(X)$, induced by $f$, is a surjection. Other equivalent conditions are given in \cite[Proposition~2.6, Lemma~3.1]{BP}. One of the main results of \cite{BP} gives another important equivalent criterion for the cover $f$ to be genuinely ramified in terms of the slope stability of bundles via the pullback under $f$.

\begin{theorem}[{\cite[Theorem~1.1]{BP}}]
Let $f \colon Y \longrightarrow X$ be a non-trivial cover of smooth projective connected $k$-curves. Then $f$ is genuinely ramified if and only if for every stable bundle $E$ on $X$, the pullback bundle $f^*E$ is stable on $Y$.
\end{theorem}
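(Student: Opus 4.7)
The plan is to prove the two implications separately; the substantive content lies in the forward direction.

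For the converse direction, I would argue by contrapositive. If $f$ is not genuinely ramified, then by Proposition~\ref{prop_intro_1}\eqref{e:2} (applied in the smooth-curve setting) the map $f$ factors non-trivially through a connected \'{e}tale sub-cover $g\colon \hat X\to X$. The plan is to build a stable vector bundle $V$ on $X$ so that $g^*V$, and hence $f^*V$, fails to be stable. Passing to a Galois closure of $g$ with Galois group $H$ and picking a sufficiently generic line bundle $L$ there whose $H$-orbit consists of pairwise non-isomorphic line bundles, the pushforward to $X$ produces such a $V$: its pullback to the Galois closure decomposes as a direct sum indexed by the orbit, so already the intermediate pullback $g^*V$ is polystable of rank $\deg(g)>1$ and visibly not stable.

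For the main direction, let $E$ be stable on $X$. By Proposition~\ref{prop_properties_stacky}\eqref{s:b2}, $f^*E$ is semistable, so the task is to upgrade this to stability. The central computation combines the identity $\cEnd(f^*E)=f^*\cEnd(E)$ with the projection formula to obtain
\[
H^0\bigl(Y,\cEnd(f^*E)\bigr) \;=\; H^0\bigl(X,\cEnd(E)\otimes f_*\mathcal{O}_Y\bigr).
\]
The plan is to filter the right hand side by the Harder--Narasimhan filtration of $f_*\mathcal{O}_Y$. By Proposition~\ref{prop_intro_1}\eqref{e:1}, its first step is $\text{HN}(f_*\mathcal{O}_Y)_1=\mathcal{O}_X$ of slope $0$, while all subsequent graded pieces have strictly negative slope. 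Since $E$ is stable, $\cEnd(E)$ is semistable of slope $0$; tensoring yields a filtration of $\cEnd(E)\otimes f_*\mathcal{O}_Y$ whose first graded piece is $\cEnd(E)$ and whose other graded pieces are semistable of strictly negative slope, hence have no global sections. Combined with $H^0(\cEnd(E))=k$ (stability of $E$), this gives $H^0(Y,\cEnd(f^*E))=k$; in other words, $f^*E$ is simple.

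The main obstacle will be upgrading ``semistable and simple'' to ``stable'', because a non-split extension of two non-isomorphic stable bundles of the same slope is simple without being stable, so the conclusion is not automatic. To close this gap the plan is to show additionally that $f^*E$ is polystable; combined with simplicity this forces $f^*E$ to be a single stable summand. One route is to pass to a Galois cover $Z\to X$ dominating $f$ and use equivariant arguments there to deduce polystability of $f^*E$ after descent; an alternative is to take a hypothetical proper semistable subbundle $F\subsetneq f^*E$ of slope $\mu(f^*E)$, push it forward, and compare its image inside $E\otimes f_*\mathcal{O}_Y$ against the same HN filtration to derive a contradiction to the stability of $E$. In positive characteristic there is the additional subtlety that tensor products of semistable bundles need not be semistable; the workaround is to replace the tensor step by a direct slope bound on the maximal destabilizing subbundle of $f^*E$ obtained via the adjunction $f^*\dashv f_*$.
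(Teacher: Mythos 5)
Your outline of the forward direction --- $f^*E$ is polystable, polystable plus simple implies stable, and simplicity is extracted from $H^0(Y,\mathcal{E}nd(f^*E))=H^0(X,\mathcal{E}nd(E)\otimes f_*\mathcal{O}_Y)$ together with $\mathrm{HN}(f_*\mathcal{O}_Y)_1=\mathcal{O}_X$ --- is the right skeleton and is essentially how \cite{BP} and the present paper proceed. But the plan leaves the technical core as a one-line gesture, and that is where the theorem actually lives. The statement you need is $\mu_{\mathrm{max}}\bigl(E\otimes(f_*\mathcal{O}_Y/\mathcal{O}_X)\bigr)<\mu(E)$. In characteristic $p$ this does not follow from ``$E$ is semistable and the second factor has negative $\mu_{\mathrm{max}}$'', and the phrase ``a direct slope bound on the maximal destabilizing subbundle of $f^*E$ obtained via the adjunction'' does not describe an argument. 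The mechanism the paper uses (Proposition~\ref{prop_inclusion} and Proposition~\ref{prop_negative}, feeding into Corollary~\ref{cor_main}) is: for a Galois genuinely ramified cover of degree $d$, the connectedness of $Y\times_X Y$ plus the vanishing $H^0\bigl(Y,f^*(f_*\mathcal{O}_Y/\mathcal{O}_X)\bigr)=0$ yield an embedding $f^*(f_*\mathcal{O}_Y/\mathcal{O}_X)\hookrightarrow\mathcal{O}_Y^{\oplus(d-1)}$ and then, inductively, a filtration whose graded pieces are line bundles of negative degree; since tensoring with a line bundle shifts $\mu_{\mathrm{max}}$ exactly, the bound follows in every characteristic. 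Without this (or an equivalent substitute) your simplicity computation is unproved in characteristic $p$. A second, related omission: that machinery is developed for \emph{Galois} covers, the theorem assumes no such thing, and the Galois closure $\bar Y\to X$ of a genuinely ramified cover need not itself be genuinely ramified. One must factor $\bar Y\to X$ as a genuinely ramified Galois cover followed by the maximal \'{e}tale subcover, apply the $\Hom$-identification only to the first factor, and then descend the relevant summand (or subbundle) through the \'{e}tale Galois piece by equivariance --- this is the entire second half of the paper's proof of Theorem~\ref{thm_main} and is absent from your plan.

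Your converse direction is sound and is a genuine alternative to the paper's: Proposition~\ref{prop_counter_eg} builds the counterexample from an induced-module bundle $\mathcal{O}\otimes_k\mathrm{Ind}_H^G V$ on a dominating Galois \'{e}tale cover, whereas you push forward a generic line bundle from the Galois closure. The latter is cleaner, but note that you are invoking without proof the (true, standard) lemma that for an \'{e}tale Galois cover $p$ with group $H$, the bundle $p_*L$ is stable whenever the translates $h^*L$, $h\in H$, are pairwise non-isomorphic; you should also record why such an $L$ exists (the relevant curves have genus at least one, since $\mathbb{P}^1$ admits no nontrivial \'{e}tale covers).
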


In Proposition~\ref{prop_maximal_etal_subcover}, we saw that a cover of orbifold curves factors in a similar way as in~\ref{eq_factor_curves_case}, where the maximal \'{e}tale sub-cover of a cover is identified with the (necessarily representable) cover associated with the maximal destabilizing sub-bundle of the pushforward of the structure sheaf. In the following, we establish the equivalent conditions on covers of orbifold curves, which do not have a non-trivial \'{e}tale sub-cover. We will prove in the next section (Theorem~\ref{thm_main} and Theorem~\ref{thm_converse}) that this class of covers is precisely the one preserving orbifold slope stability conditions.

\begin{proposition}\label{prop_gen_ram_equivalences}
Let $f \colon (Y,Q) \longrightarrow (X,P)$ be a cover of connected orbifold curves. The following are equivalent.
\begin{enumerate}[leftmargin=*]
\item $\text{\rm HN}(f_*\mathcal{O}_{(Y,Q)})_1 = \mathcal{O}_{(X,P)}$.\label{equiv:1}
\item The cover $f$ does not factor through any non-trivial \'{e}tale sub-cover.\label{equiv:2}
\item The homomorphism between \'{e}tale fundamental groups $f_* \colon \pi_1(Y,Q) \longrightarrow \pi_1(X,P)$ induced by $f$ (cf. \cite[Proposition~2.26]{KP}) is a surjection.\label{equiv:3}
\item For any \'{e}tale cover $(Z,R) \longrightarrow (X,P)$ of connected orbifold curves, the fiber product orbifold curve $(Y,Q) \times_{(X,P)} (Z,R)$ is connected.\label{equiv:3'}
\item The fiber product stacky curve $(Y,Q) \times_{(X,P)} (Y,Q)$ is connected.\label{equiv:4}
\item $H^0( (Y,Q) , f^* f_* \mathcal{O}_{(Y,Q)}) \cong k$.\label{equiv:5}
\end{enumerate}
Finally, the above conditions imply that the cover $f_0 \colon Y \longrightarrow X$ induced on the Coarse moduli curves is genuinely ramified.
\end{proposition}

\begin{proof}
It is not hard to see that each of the statements \eqref{equiv:1}--\eqref{equiv:5} for the branch data $Q$ is in fact equivalent to the corresponding statement with $Q$ replaced by $f_0^*P$ and $f$ replaced by the induced cover $(Y,f_0^*P) \longrightarrow (X,P)$; here, the crucial points to notice are the following (whose detail we reserve for the reader).
\begin{enumerate}[leftmargin=*, label=(\alph*)]
    \item $f_*\mathcal{O}_{(Y,Q)} \cong \mathcal{O}_{(X,P)} \cong f_*\mathcal{O}_{(Y,f_0^*P)}$ by Lemma~\ref{lem_push_div_line_nonrep};
    \item the maximal \'{e}tale sub-covers for $(Y,Q) \longrightarrow (X,P)$ and for $(Y,f_0^*P) \longrightarrow (X,P)$ coincide;
    \item by the property of the Coarse moduli space (cf. Definition~\ref{def_KM}), a stacky curve $\mathfrak{Y}$ is connected if and only if the Coarse moduli curve $Y$ is connected if and only if
    \[
    \mathrm{H}^0(\mathfrak{Y}, \mathcal{O}_{\mathfrak{Y}}) = \mathrm{H}^0(Y, \mathcal{O}_Y) \cong k
    \]
\end{enumerate}
In the following, we assume $Q = f_0^*P$, and $f \colon (Y,f_0^*P) \longrightarrow (X,P)$.

By Proposition~\ref{prop_maximal_etal_subcover}, the maximal \'{e}tale sub-cover of $f$ is given by
$$(\hat{X}, g_0^*P) = \underline{\text{\rm Spec}}\left( \text{\rm HN}( f_* \mathcal{O}_{(Y,Q)} )_1 \right) \overset{g} \longrightarrow (X,P)$$
where $g_0$ is the maximal sub-cover of $f_0 \colon Y \longrightarrow X$ that is an essentially \'{e}tale cover of $(X,P)$. Since the degree of the later cover $g$ is the rank of the bundle $\text{\rm HN}( f_* \mathcal{O}_{(Y,Q)} )_1$, \eqref{equiv:2} is equivalent to: $\text{\rm HN}( f_* \mathcal{O}_{(Y,Q)} )_1$ is a line bundle. As $\mathcal{O}_{(X,P)} \subset \text{\rm HN}( f_* \mathcal{O}_{(Y,Q)} )_1$, and $\mu_P(\mathcal{O}_{(X,P)}) = \mu_P(\text{\rm HN}( f_* \mathcal{O}_{(Y,Q)} )_1) = 0$ by Lemma~\ref{lem_algebra}, we see that \eqref{equiv:1} and \eqref{equiv:2} are equivalent.

By \cite[Proposition~2.42]{KP}, the image of the homomorphism $f_* \colon \pi_1(Y,Q) \longrightarrow \pi_1(X,P)$, induced by the cover $f$, is a finite index open subgroup of $\pi_1(X,P)$, and this corresponds to the essentially \'{e}tale cover $g_0 \colon \hat{X} \longrightarrow X$. So $f_*$ is a surjective homomorphism if and only if $g_0 = \text{\rm id}_X$, and we have \eqref{equiv:2}$\Leftrightarrow$\eqref{equiv:3}.

\eqref{equiv:3} is equivalent to \eqref{equiv:3'} by \cite[\href{https://stacks.math.columbia.edu/tag/0BN6}{Lemma 0BN6}]{SP} together with the observation in Remark~\ref{rmk_etale_fundamental_group} that a connected object in the category $\text{\rm \'{E}t}_{(X,P)}$ of \'{e}tale covers of $(X,P)$ corresponds to an essentially \'{e}tale cover $Y \longrightarrow X$ of $(X,P)$ where $Y$ is connected.

We show the equivalence \eqref{equiv:4}$\Leftrightarrow$\eqref{equiv:5}. By Proposition~\ref{prop_proj_bc}, we have an isomorphism
$$f^*f_* \mathcal{O}_{(Y,Q)} \cong (p_1)_* p_2^* \mathcal{O}_{(Y,Q)}$$
where $p_1$ and $p_2$ are the projection morphisms $(Y,Q) \times_{(X,P)} (Y,Q) \longrightarrow (Y,Q)$. Since $p_2^* \mathcal{O}_{(Y,Q)} = \mathcal{O}_{(Y,Q) \times_{(X,P)} (Y,Q)}$, we conclude that
\begin{eqnarray*}
\mathrm{H}^0((Y,Q) , f^* f_* \mathcal{O}_{(Y,Q)}) & = & \mathrm{H}^0((Y,Q) , (p_1)_* \mathcal{O}_{(Y,Q) \times_{(X,P)} (Y,Q)})\\
& = & \mathrm{H}^0((Y,Q) \times_{(X,P)} (Y,Q), \mathcal{O}_{(Y,Q) \times_{(X,P)} (Y,Q)}).
\end{eqnarray*}
Note that $(Y,Q) \times_{(X,P)} (Y,Q)$ is a reduced DM stack, each of whose irreducible components is an orbifold curve. Let $(Y,Q) \times_{(X,P)} (Y,Q) \longrightarrow S$ be the Coarse moduli morphism. By the property of the Coarse moduli space (cf. Definition~\ref{def_KM}), 
$$\mathrm{H}^0((Y,Q) \times_{(X,P)} (Y,Q), \mathcal{O}_{(Y,Q) \times_{(X,P)} (Y,Q)}) \cong \mathrm{H}^0(S, \mathcal{O}_S).$$
Since $(Y,Q) \times_{(X,P)} (Y,Q)$ is connected if and only if $S$ is connected, the equivalence follows.

Now we show that \eqref{equiv:4}$\Rightarrow$\eqref{equiv:2}. Suppose that $f$ factors as a composition
$$(Y,Q) \longrightarrow (\hat{X}, g_0^*P) \overset{g} \longrightarrow (X,P)$$
where $g$ is a non-trivial \'{e}tale cover. Then the fiber product $(\hat{X}, g_0^*P) \times_{(X,P)} (\hat{X}, g_0^*P)$ is disconnected as it contains $(\hat{X}, g_0^*P)$ as a connected component; this follows because the diagonal morphism is an open imbedding, and $g_0$ is non-trivial. Since we have the induced cover $(Y,Q) \times_{(X,P)} (Y,Q) \longrightarrow (\hat{X}, g_0^*P) \times_{(X,P)} (\hat{X}, g_0^*P)$, we conclude that the fiber product stacky curve $(Y,Q) \times_{(X,P)} (Y,Q)$ is also not connected.

It remains to show: \eqref{equiv:1}$\Rightarrow$\eqref{equiv:5}. Set $V \coloneqq \text{\rm HN}(f_* \mathcal{O}_{(Y,Q)})_1$, and $W \coloneqq f_* \mathcal{O}_{(Y,Q)}/V$. By Lemma~\ref{lem_cohomology_vanishing}, we have:
\[
\mathrm{H}^0((Y,Q), f^* f_* \mathcal{O}_{(Y,Q)}) \cong \mathrm{H}^0((Y,Q), f^* V)
\]
By our assumption, $V \cong \mathcal{O}_{(X,P)}$. Since $Y$ is connected, we have
\[
\mathrm{H}^0((Y,Q), f^* f_* \mathcal{O}_{(Y,Q)}) \cong \mathrm{H}^0((Y,Q), \mathcal{O}_{(Y,Q)}) \cong k,
\]
and the result follows.

To see the last statement, assume that $f_0 \colon Y \longrightarrow X$ is not genuinely ramified. Then there is a non-trivial \'{e}tale sub-cover $g_0 \colon X' \longrightarrow X$ of $f_0$. In particular, $g_0$ is an essentially \'{e}tale cover of $(X,P)$. Then $(X',g_0^*P) \longrightarrow (X,P)$ is a non-trivial \'{e}tale sub-cover of $f$.
\end{proof}

\begin{definition}\label{def_gen_ram}
A non-trivial cover $f \colon (Y,Q) \longrightarrow (X,P)$ of connected orbifold curves is said to be \textit{genuinely ramified} if $f$ satisfies the equivalent conditions from Proposition~\ref{prop_gen_ram_equivalences}.
\end{definition}

Given a cover $f \colon (Y,Q) \longrightarrow (X,P)$ of connected orbifold curves and a bundle $E \in \text{\rm Vect}(Y,Q)$, it is natural to ask whether there exists a bundle $F \in \text{\rm Vect}(X,P)$ such that $E \cong f^*F$. The following result gives an answer when $f$ is genuinely ramified and $E$ is $Q$-stable (see \cite[Theorem~3.2]{BDP} for the case of normal varieties).

\begin{theorem}\label{thm_descent}
Let $f \colon (Y,Q) \longrightarrow (X,P)$ be a cover of connected orbifold curves. Let $E \in \Vect(Y,Q)$ be $Q$-stable. If there exists a bundle $F \in \Vect(X,P)$ such that $E \cong f^*F$, then the $P$-stable sub-bundle $F$ of $f_*E$ satisfies $\mu_P(F) = \frac{\mu_Q(E)}{\text{\rm deg}(f_0)}$.

The converse is also true when $f$ is genuinely ramified.
\end{theorem}

\begin{proof}
Suppose that $E \cong f^*F$ for some $F \in \Vect(X,P)$. By Proposition~\ref{prop_properties_stacky}\eqref{s:b1}, we have $\mu_P(F) = \frac{\mu_Q(E)}{\text{\rm deg}(f_0)}$. By Remark~\ref{rmk_iota_preserve}, $F$ is $P$-stable. Then by Proposition~\ref{prop_proj_bc}\eqref{pf},
$$f_*E \cong f_*f^*F \cong F \otimes f_* \mathcal{O}_{(Y,Q)}.$$
Since $\mathcal{O}_{(X,P)} \subset f_* \mathcal{O}_{(Y,Q)}$ is a sub-bundle, and tensor product with a bundle is an exact functor, $F$ is realized as a sub-bundle of $f_*E$.

Now suppose that $f$ is genuinely ramified, and $f_*E$ contains a $P$-stable sub-bundle $F$ such that $\mu_P(F) = \frac{\mu_Q(E)}{\text{\rm deg}(f_0)}$. By Theorem~\ref{thm_main}, $f^*F$ is $Q$-stable. The morphism $h \colon f^*F \longrightarrow E$, the adjoint of the inclusion $F \hookrightarrow f_*E$, is a non-zero homomorphism of $Q$-stable bundles of the same $Q$-slopes. So $h$ is an isomorphism (the proof is similar to the curve case).
\end{proof}

\subsection{Pullback under a Genuinely Ramified Map}\label{sec_main-direction}
In this section, we establish that a cover $f \colon (Y,Q) \longrightarrow (X,P)$ of connected orbifold curves is genuinely ramified (see Definition~\ref{def_gen_ram}) if and only if the pullback of any $P$-stable bundle is $Q$-stable; this is the main result of \cite{BP} for the cover of curves.

One of our key observations is that when $(Y,Q)$ is a geometric orbifold curve, $f_0 \colon Y \longrightarrow X$ is a non-trivial Galois cover, and $f$ is genuinely ramified, then for any $P$-semistable bundle $F$ on $(X,P)$ produces a strict inequality:
\begin{equation}\label{eq_re}
\mu_Q(f^*F \otimes f^*f_*\mathcal{O}_{(Y,Q)}/\mathcal{O}_{(Y,Q)}) < \mu_P(f^*F).    
\end{equation}
For an arbitrary representable Galois cover $f$, from Lemma~\ref{lem_push_exactness_rep_cover} and Lemma~\ref{lem_cohomology_vanishing}, we see that the maximal destabilizing sub-bundle of $f^*f_*\mathcal{O}_{(Y,Q)}$ is given by $f^*\text{\rm HN}(f_*\mathcal{O}_{(Y,Q)})_1$ which is of $Q$-slope $0$. The subsequent quotients in the Harder-Narasimhan filtration of the bundle $f^*f_*\mathcal{O}_{(Y,Q)}/f^*\text{\rm HN}(f_*\mathcal{O}_{(Y,Q)})_1$ are $Q$-semistable of negative $Q$-slope. Then $f^*F \otimes \left( f^*f_*\mathcal{O}_{(Y,Q)}/f^*\text{\rm HN}(f_*\mathcal{O}_{(Y,Q)})_1 \right)$ admits a filtration by sub-bundles such that the successive quotients are bundles of negative $Q$-slope, whereas they may not be $Q$-semistable when $\text{\rm char}(k)>0$. The assumption that $f$ is genuinely ramified makes sure that all these quotients are $Q$-semistable, enabling us to prove~\eqref{eq_re}. More precisely, we show that the bundle $f^* \left( f_* \mathcal{O}_{(Y,Q)} / \mathcal{O}_{(X,P)} \right)$ admits a filtration such that the successive quotients are line bundles of negative $Q$-degrees; this is shown in the next two propositions. For the usual curve case, this follows from \cite[Proposition~5.13, pg. 76]{E2}.

\begin{proposition}\label{prop_inclusion}
Let $f \colon (Y,Q) \longrightarrow (X,P)$ be a cover or connected orbifold curves such that the cover $f_0 \colon Y \longrightarrow X$ of the Coarse moduli curves is $G$-Galois for some finite group $G$ with $|G| = d \geq 2$. Then we have an inclusion
$$f^* \left( f_* \mathcal{O}_{(Y,Q)} / \mathcal{O}_{(X,P)} \right) \subset \mathcal{O}^{\oplus (d-1)}_{(Y,Q)}$$
as coherent sheaves on $(Y,Q)$.
\end{proposition}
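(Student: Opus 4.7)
The plan is to build an explicit embedding $\psi \colon f^* f_* \mathcal{O}_{(Y,Q)} \hookrightarrow \mathcal{O}_{(Y,Q)}^{\oplus d}$ that carries the canonical subsheaf $\mathcal{O}_{(Y,Q)} = f^*\mathcal{O}_{(X,P)}$ into the diagonal, and then pass to quotients to obtain the desired inclusion.

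First I would pull back the short exact sequence
$$0 \longrightarrow \mathcal{O}_{(X,P)} \longrightarrow f_* \mathcal{O}_{(Y,Q)} \longrightarrow f_* \mathcal{O}_{(Y,Q)}/\mathcal{O}_{(X,P)} \longrightarrow 0$$
along $f$. Since $f$ is a finite cover between smooth one-dimensional DM stacks, it is flat (a miracle-flatness argument on \'{e}tale charts, where $f$ becomes a finite morphism between smooth curves), so the pulled back sequence stays exact. Next, using that $f$ is $G$-Galois, I define $\psi$ component-wise: for each $g \in G$, the $g$-th component is the composition
$$f^* f_* \mathcal{O}_{(Y,Q)} \overset{f^*(g^*)}{\longrightarrow} f^* f_* \mathcal{O}_{(Y,Q)} \overset{\epsilon}{\longrightarrow} \mathcal{O}_{(Y,Q)},$$
where $\epsilon$ is the counit of the $(f^*,f_*)$-adjunction. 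In local algebraic language with $A = \mathcal{O}_{(X,P)}$ and $B = f_*\mathcal{O}_{(Y,Q)}$, this is the $A$-algebra map $B \otimes_A B \longrightarrow \prod_{g \in G} B$ sending $b_1 \otimes b_2 \mapsto (b_1 \cdot g(b_2))_{g \in G}$, familiar from Galois theory.

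The crucial step is proving injectivity of $\psi$. Both source and target are locally free $\mathcal{O}_{(Y,Q)}$-modules of rank $d$ (since $f$ is finite flat of degree $d$), hence torsion-free; it therefore suffices to check injectivity at the generic point of $(Y,Q)$. There, $f$ becomes an \'{e}tale $G$-Galois extension of function fields $K(Y)/K(X)$, and the classical decomposition $K(Y) \otimes_{K(X)} K(Y) \cong \prod_{g \in G} K(Y)$ identifies $\psi$ at the generic point with this isomorphism. A direct local computation, $\psi(\beta \otimes 1) = (\beta \cdot g(1))_{g} = (\beta)_{g}$, shows that the canonical inclusion $\mathcal{O}_{(Y,Q)} = f^*\mathcal{O}_{(X,P)} \hookrightarrow f^* f_* \mathcal{O}_{(Y,Q)}$ composed with $\psi$ is precisely the diagonal $\Delta \colon \mathcal{O}_{(Y,Q)} \hookrightarrow \mathcal{O}_{(Y,Q)}^{\oplus d}$.

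Finally, I apply the snake lemma to the commutative diagram
$$
\begin{array}{ccccccccc}
0 & \to & \mathcal{O}_{(Y,Q)} & \to & f^* f_* \mathcal{O}_{(Y,Q)} & \to & f^*(f_* \mathcal{O}_{(Y,Q)}/\mathcal{O}_{(X,P)}) & \to & 0 \\
& & \| & & \downarrow \psi & & \downarrow \overline{\psi} & & \\
0 & \to & \mathcal{O}_{(Y,Q)} & \overset{\Delta}{\to} & \mathcal{O}_{(Y,Q)}^{\oplus d} & \to & \mathcal{O}_{(Y,Q)}^{\oplus (d-1)} & \to & 0
\end{array}
$$
whose bottom row is the split sequence coming from the first-factor projection, to conclude that the induced map $\overline{\psi}$ is injective; this is the desired inclusion. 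The main subtlety will be handling the counit, flatness, and the generic-fiber identification rigorously in the stacky setting, but these issues disappear once we work on an \'{e}tale chart, or equivalently pass through the equivalence of Section~\ref{sec_stability_stacky_curve} with $G$-equivariant bundles on a Galois cover dominating the Coarse moduli map.
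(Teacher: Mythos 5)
Your argument is correct, and it produces exactly the same map as the paper does, only packaged algebraically rather than geometrically. The paper identifies $f^*f_*\mathcal{O}_{(Y,Q)}$ with $(p_1)_*\mathcal{O}_{\mathfrak{Y}\times_{\mathfrak{X}}\mathfrak{Y}}$ by flat base change (\cite{LMB}, \cite{Brochard}) and then embeds the structure sheaf of the self--fiber product into that of its normalization $(\widetilde{Y\times_XY},\,Q\times_PQ)$ from \cite{KP}, which for a Galois cover is $\mathcal{O}_{(Y,Q)}\otimes k[G]$; injectivity is inherited from the normalization map of a reduced stack, and the complement $\mathcal{O}_{(Y,Q)}^{\oplus(d-1)}$ is split off abstractly because the image of $\mathcal{O}_{(Y,Q)}$ is a line subbundle spanned by a global section of a trivial bundle. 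Your $\psi$, locally $b_1\otimes b_2\mapsto\left(b_1\cdot g(b_2)\right)_{g\in G}$, is precisely the composite of these two identifications, so the two proofs differ only in how injectivity is established (you check it at the generic point and use torsion-freeness of locally free sheaves on the irreducible $(Y,Q)$, instead of invoking the normalization) and in how the rank-$(d-1)$ quotient is extracted (explicit diagonal plus snake lemma versus the abstract splitting $W\oplus L$). What your route buys is the avoidance of the formal-orbifold fiber product and its normalization altogether; what it costs is having to make sense of the $G$-action, the counit, and flatness on charts, which, as you note, reduces to the equivariant picture on a Galois cover dominating the Coarse moduli map (take an atlas of $\mathfrak{Y}\times_{\mathfrak{X}}Z$ rather than of $\mathfrak{X}$ alone, since $\mathfrak{Y}$ may be stackier than $\mathfrak{X}$). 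One cosmetic caution: with the usual convention that the $\mathcal{O}_{(Y,Q)}$-module structure on $f^*f_*\mathcal{O}_{(Y,Q)}=B\otimes_AB$ sits on the second factor, you should take $\psi(b_1\otimes b_2)=\left(g(b_1)\cdot b_2\right)_{g\in G}$ so that $\psi$ is $\mathcal{O}_{(Y,Q)}$-linear and still carries the canonical copy $1\otimes\beta$ of $\mathcal{O}_{(Y,Q)}$ to the diagonal.
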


\begin{proof}
Set $\mathfrak{Y} \coloneqq (Y,Q)$ and $\mathfrak{X} \coloneqq (X,P)$. When $P$ and $Q$ are the trivial branch data, the statement is \cite[Proposition~3.3]{BP}.

We consider the fiber product DM stack $\mathfrak{Y} \times_{\mathfrak{X}} \mathfrak{Y}$, and its normalization orbifold curve $(\widetilde{Y \times_X Y}, Q \times_P Q)$ (see \cite[Proposition~2.14]{KP}) where $\widetilde{Y \times_X Y}$ denote the normalization of the curve $Y \times_X Y$. We have the following commutative diagram.
\begin{center}
\begin{tikzcd}
\tilde{\mathfrak{Y}} \coloneqq (\widetilde{Y \times_X Y}, Q \times_P Q)
\arrow[drr, bend left, "h"]
\arrow[ddr, bend right, "\theta"]
\arrow[dr,  "\nu"] & & \\
& \mathfrak{Y} \times_{\mathfrak{X}} \mathfrak{Y} \arrow[dr, phantom, "\square"] \arrow[r, "p_{2}"] \arrow[d, "p_{1}"]
& \mathfrak{Y} = (Y,Q) \arrow[d, "f"] \\
& \mathfrak{Y} = (Y,Q) \arrow[r, "f"]
& \mathfrak{X} = (X,P)
\end{tikzcd}
\end{center}
We have $(p_1)_* \mathcal{O}_{\mathfrak{Y} \times_{\mathfrak{X}} \mathfrak{Y}} \subset \theta_* \mathcal{O}_{\tilde{\mathfrak{Y}}} \cong \mathcal{O}_{\mathfrak{Y}} \otimes k[G]$. Under this inclusion, $\mathcal{O}_{\mathfrak{Y}} \subset (p_1)_* \mathcal{O}_{\mathfrak{Y} \times_{\mathfrak{X}} \mathfrak{Y}}$ is mapped isomorphically onto $\mathcal{O}_{\mathfrak{Y}} \subset \theta_* \mathcal{O}_{\tilde{\mathfrak{Y}}}$, which in turn defines a line sub-bundle $L \cong \mathcal{O}_{\mathfrak{Y}}$ of $\mathcal{O}_{\mathfrak{Y}} \otimes k[G]$. Any subspace of $H^0(\mathfrak{Y}, \mathcal{O}_{\mathfrak{Y}} \otimes k[G]) = k[G]$ is a direct summand. So there exists a trivial sub-bundle $W \subset \mathcal{O}_{\mathfrak{Y}} \otimes k[G]$ such that $\mathcal{O}_{\mathfrak{Y}} \otimes k[G] = W \oplus L$. Thus we obtain an inclusion
$$(p_1)_* \mathcal{O}_{\mathfrak{Y} \times_{\mathfrak{X}} \mathfrak{Y}}/ \mathcal{O}_{\mathfrak{Y}} \subset W \cong \mathcal{O}_{\mathfrak{Y}}^{\oplus (d-1)}.$$
This produces the desired inclusion
$$f^* \left( f_* \mathcal{O}_{(Y,Q)} / \mathcal{O}_{(X,P)} \right) \cong f^*f_* \mathcal{O}_{(Y,Q)}/f^* \mathcal{O}_{(X,P)} \cong (p_1)_* \mathcal{O}_{\mathfrak{Y} \times_{\mathfrak{X}} \mathfrak{Y}}/ \mathcal{O}_{\mathfrak{Y}} \subset \mathcal{O}_{\mathfrak{Y}}^{\oplus (d-1)}$$
where the first isomorphism holds because $f^*$ is an exact functor, and the second isomorphism is from Proposition~\ref{prop_proj_bc}.
\end{proof}

\begin{proposition}\label{prop_negative}
Under the hypothesis of Proposition~\ref{prop_inclusion}, additionally assume that $f$ is a genuinely ramified cover and $(Y,Q)$ is geometric. Set $E \coloneqq f^* \left( f_* \mathcal{O}_{(Y,Q)} / \mathcal{O}_{(X,P)} \right) \in \Vect(Y,Q)$. Then $E$ admits a filtration
$$E = E_1 \supset E_2 \supset \ldots \supset E_{d}=0$$
of sub-bundles such that each $E_i/E_{i+1}$ is a line bundle of negative $Q$-degree.
\end{proposition}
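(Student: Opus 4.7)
The plan is to prove the following slightly more general statement by induction on $n \geq 1$: if $V \hookrightarrow \mathcal{O}_{(Y,Q)}^{\oplus n}$ is a subsheaf of rank $n$ satisfying $\mu_{Q,\text{\rm max}}(V) < 0$, then $V$ admits a filtration $V = V_1 \supset V_2 \supset \cdots \supset V_{n+1} = 0$ by sub-bundles in $\Vect(Y,Q)$ with each successive quotient a line bundle of negative $Q$-degree. The proposition then follows by taking $V = E$ with $n = d-1$: the inclusion is supplied by Proposition~\ref{prop_inclusion}, while the slope bound $\mu_{Q,\text{\rm max}}(E) < 0$ follows from $\text{\rm HN}(f_*\mathcal{O}_{(Y,Q)})_1 = \mathcal{O}_{(X,P)}$ (Proposition~\ref{prop_gen_ram_equivalences}), the strict decrease of slopes in a Harder--Narasimhan filtration, and Proposition~\ref{prop_properties_stacky}\eqref{s:b1}.

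For the base case $n = 1$, a rank-one torsion-free subsheaf of $\mathcal{O}_{(Y,Q)}$ is, by smoothness of $(Y,Q)$ and Lemma~\ref{lem4.2}, of the form $\mathcal{O}_{(Y,Q)}(-D)$ for some effective $D \in \Div(Y,Q)$; if $D = 0$ then $V \cong \mathcal{O}_{(Y,Q)}$ has $\mu_Q(V) = 0$, contradicting the hypothesis, so $\text{\rm deg}_Q V < 0$. For the inductive step $n \geq 2$, the aim is to produce a morphism $\phi \colon \mathcal{O}_{(Y,Q)}^{\oplus n} \to \mathcal{O}_{(Y,Q)}$ whose restriction to $V$ has image a line sub-sheaf $\mathcal{O}_{(Y,Q)}(-D_\phi)$ with $D_\phi$ effective and non-zero. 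Since $\text{\rm deg}_Q V < 0 = \text{\rm deg}_Q \mathcal{O}_{(Y,Q)}^{\oplus n}$, the cokernel $\mathcal{O}_{(Y,Q)}^{\oplus n}/V$ is a non-trivial torsion sheaf; at any point $y$ in its support, a rank count on the coarse moduli curve shows that the induced fiber map $V|_y \to k^n$ has image a proper subspace $W_y$, and any non-zero $\phi \in W_y^\perp \subset (k^n)^\vee \cong H^0((Y,Q), \mathcal{O}_{(Y,Q)}^{\oplus n, \vee})$ yields a restriction $\phi|_V$ vanishing at $y$, so $y \in \Supp(D_\phi)$. Setting $V_2 \coloneqq \ker(\phi|_V) = V \cap \ker\phi$, the short exact sequence $0 \to V_2 \to V \to \mathcal{O}_{(Y,Q)}(-D_\phi) \to 0$ realizes $V_2$ as a sub-bundle of $V$ of rank $n-1$, contained in $\ker\phi \cong \mathcal{O}_{(Y,Q)}^{\oplus(n-1)}$ and inheriting $\mu_{Q,\text{\rm max}}(V_2) \leq \mu_{Q,\text{\rm max}}(V) < 0$; the inductive hypothesis supplies the remainder of the filtration.

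The main delicate point will be ensuring the construction interacts correctly with the orbifold structure: concretely, that the morphism $\phi$ and the sub-sheaf $V_2$ qualify in $\Vect(Y,Q)$ and satisfy Definition~\ref{def_subsubdle_formal_orbifold_curves}. By Remark~\ref{rmk_sub-bundle} this reduces to checking $G_y$-equivariance on the completed local $R_y$-modules at each stacky point, which is automatic because scalar linear maps $k^n \to k$ commute with the diagonal Galois action on $R_y^n$. A minor auxiliary observation used throughout is that a non-zero effective divisor on $(Y,Q)$ has strictly positive $Q$-degree, which is immediate from the definition $\text{\rm deg}_Q(\sum n_y y) = \sum n_y/[Q(y)\colon K_{Y,y}]$. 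The geometric hypothesis on $(Y,Q)$ does not appear to be needed by this inductive argument; it presumably enters either for uniformity with the surrounding development or to allow an alternative proof by pullback to the smooth projective curve $Z$ furnished by the Galois \'etale cover $Z \to (Y,Q)$ provided by geometricity, whence one could instead invoke the curve-theoretic \cite[Proposition~5.13]{E2} directly.
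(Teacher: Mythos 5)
Your argument is correct, and it follows the same skeleton as the paper's proof --- both start from the inclusion $E \hookrightarrow \mathcal{O}_{(Y,Q)}^{\oplus (d-1)}$ of Proposition~\ref{prop_inclusion} and then repeatedly produce a constant-coefficient projection $\mathcal{O}^{\oplus n} \to \mathcal{O}$ whose restriction to the sub-sheaf has image a proper non-zero ideal sheaf, inducting on the kernel --- but the key step is implemented differently. The paper passes to the $\Gamma$-Galois \'{e}tale cover $Z \to (Y,Q)$ supplied by the geometricity hypothesis, builds an equivariant skyscraper quotient $F$ of $\mathcal{O}^{\oplus(d-1)}/E$ supported on a $\Gamma$-orbit, and extracts the projection from the surjection $H^0(\mathcal{O}^{\oplus(d-1)}) \to H^0(F)$. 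You instead pick one point $y$ in the support of the torsion cokernel, note (by Nakayama) that the fiber image $W_y \subsetneq k^n$ is proper, and take any functional annihilating $W_y$; equivariance at the stacky points is then automatic since the functional has constant coefficients, and $\phi|_V \neq 0$ because $V$ has full rank while $\ker\phi$ does not. You also carry the stronger invariant $\mu_{Q,\mathrm{max}} < 0$ through the induction where the paper carries $H^0 = 0$ together with the trivializing inclusion; both work, and yours makes the base case and the non-triviality of each cokernel immediate. Your observation that geometricity of $(Y,Q)$ is not needed is correct: the paper invokes it only to have the global cover $Z$ available, whereas your argument uses only the \'{e}tale-local quotient structure $[\Spec(R_y)/G_y]$ that every orbifold curve possesses. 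One small imprecision: the fiber map should be computed at the residual gerbe, i.e.\ on the local chart $\Spec(R_y)$, rather than literally ``on the coarse moduli curve''; either reading happens to suffice here, but the local one is the clean statement.
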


\begin{proof}
If $d = 2$, then $E$ is a line bundle of negative $Q$-degree since the cover $f$ is genuinely ramified. Assume that $d \geq 3$. We again set $\mathfrak{Y} \coloneqq (Y,Q)$ and $\mathfrak{X} \coloneqq (X,P)$. When $P$ and $Q$ are the trivial branch data, the statement is \cite[Proposition~5.13, pg. 76]{E2}.

By Proposition~\ref{prop_inclusion}, we have an inclusion $\alpha \colon E \hookrightarrow \mathcal{O}_{\mathfrak{Y}}^{\oplus (d-1)}$. By Lemma~\ref{lem_cohomology_vanishing}, $H^0(\mathfrak{Y}, E) = 0$. Thus $\alpha$ cannot be an isomorphism. So $\text{\rm coker}(\alpha)$ is a torsion sheaf. Let $g \colon Z \longrightarrow \mathfrak{Y}$ be a $\Gamma$-Galois \'{e}tale cover where $Z$ is a smooth projective connected $k$-curve. Then we have a $\Gamma$-equivariant inclusion in $\Vect^{\Gamma}(Z)$:
$$g^* \alpha \colon \mathcal{E} \coloneqq g^* E \hookrightarrow \mathcal{O}_Z^{\oplus (d-1)}.$$
So $\text{\rm coker}(g^* \alpha)$ is a $\Gamma$-equivariant torsion sheaf and has a finite $\Gamma$-invariant support. Let $z \in Z$ be in the support. Set $\Gamma z$ for the $\Gamma$-orbit of the point $z$. We can choose a $\Gamma$-equivariant epimorphism $\mathcal{O}_Z^{\oplus (d-1)}/\mathcal{E} \twoheadrightarrow \oplus_{z' \in \Gamma z} \mathcal{O}_{z'}$, where $\mathcal{O}_{z'}$ is the skyscraper sheaf at $z'$. Consider the coherent torsion sheaf $F \coloneqq g_*^G(\oplus_{z' \in \Gamma z} \mathcal{O}_{z'})$ on $\mathfrak{Y}$; it is a skyscraper sheaf on the gerbe $y$ for which $g^*y = \Gamma z$. Then we obtain an epimorphism $\mathcal{O}_{\mathfrak{Y}}^{\oplus (d-1)}/E \twoheadrightarrow F$. Since $\mathcal{O}_{\mathfrak{Y}}^{\oplus (d-1)}$ is generated by its global sections, the images of the global sections generate $F$. So the map $k^{d-1} = H^0(\mathfrak{Y}, \mathcal{O}_{\mathfrak{Y}}^{\oplus (d-1)}) \longrightarrow H^0(\mathfrak{Y}, F) = k$ is surjective, whose kernel has dimension $d-2$. Since any subspace of $H^0(\mathfrak{Y}, \mathcal{O}_{\mathfrak{Y}}^{\oplus (d-1)})$ generate a direct summand, we obtain a summand $\mathcal{O}_{\mathfrak{Y}}^{\oplus (d-2)}$ of $\mathcal{O}_{\mathfrak{Y}}^{\oplus (d-1)}$ together with a map $\mathcal{O}_{\mathfrak{Y}}^{\oplus (d-1)} \longrightarrow \mathcal{O}_{\mathfrak{Y}}^{\oplus (d-1)}/E$ whose image is the skyscraper sheaf supported on the gerbe $y$. The map $\mathcal{O}_{\mathfrak{Y}}^{\oplus (d-1)} \longrightarrow F$ factors through the quotient $\mathcal{O}_{\mathfrak{Y}}^{\oplus (d-1)}/ \mathcal{O}_{\mathfrak{Y}}^{\oplus (d-2)} = \mathcal{O}_{\mathfrak{Y}}$ as in the following diagram.
\begin{center}
\begin{tikzcd}
 & \mathcal{O}_{\mathfrak{Y}}^{\oplus (d-2)} \arrow[d, hookrightarrow] \arrow[dr] & \\
 E \arrow[r, hookrightarrow, "\alpha"] \arrow[rd, "\beta"] & \mathcal{O}_{\mathfrak{Y}}^{\oplus (d-1)} \arrow[r, ->>] \arrow[d, ->>] & \mathcal{O}_{\mathfrak{Y}}^{\oplus (d-1)}/E \arrow[d, ->>] \\
 & \mathcal{O}_{\mathfrak{Y}} \arrow[r, ->>] & F
\end{tikzcd}
\end{center}
As the composite homomorphism $E \longrightarrow F$ is zero, the map $\beta \colon E \overset{\alpha} \hookrightarrow \mathcal{O}_{\mathfrak{Y}}^{\oplus (d-1)} \longrightarrow \mathcal{O}_{\mathfrak{Y}}$ is not a surjection. So the ideal sheaf $L_1 = \beta(E) \subsetneq \mathcal{O}_{\mathfrak{Y}}$ defines a non-empty finite substack $\mathfrak{Y}' \subset \mathfrak{Y}$ corresponding to a $\Gamma$-invariant finite set of points on $Z$, and $\text{\rm deg}_Q(L_1) = - \text{\rm deg}_Q(\mathfrak{Y}') < 0$. Since $(Y,Q)$ is smooth, $L_1$ is a line bundle. Thus we obtain a surjection $E \twoheadrightarrow L_1$ with $\text{\rm deg}_Q(L_1) < 0$. The kernel $E'$ of this surjection is a vector bundle of rank $d - 2$ satisfying: $E' \subset \mathcal{O}_{\mathfrak{Y}}^{\oplus (d-2)}$, and $H^0(\mathfrak{Y}, E') = 0$. Inductively, we obtain the desired filtration.
\end{proof}

We have the following easy consequence.

\begin{corollary}\label{cor_main}
Under the hypothesis of Proposition~\ref{prop_negative}, for any two $P$-semistable vector bundle $E, \, F \in \Vect(X,P)$ with $\mu_P(E) = \mu_P(F)$, we have
$$\text{\rm Hom}_{\Vect(Y,Q)}\left( f^*E, f^*F \right) = \text{\rm Hom}_{\Vect(X,P)}\left(E, F\right).$$
\end{corollary}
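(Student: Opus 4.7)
The plan is to use the adjunction $f^* \dashv f_*$ together with the projection formula (valid since $f$ is a finite flat morphism between smooth orbifold curves) to identify
$$\Hom_{\Vect(Y,Q)}(f^*E, f^*F) \cong \Hom_{\Vect(X,P)}(E, F \otimes f_*\mathcal{O}_{(Y,Q)}),$$
under which $\Hom_{\Vect(X,P)}(E, F)$ corresponds to the subspace induced by the canonical inclusion $\mathcal{O}_{(X,P)} \hookrightarrow f_*\mathcal{O}_{(Y,Q)}$. Writing $\mathcal{R} := f_*\mathcal{O}_{(Y,Q)}/\mathcal{O}_{(X,P)}$ and tensoring the short exact sequence $0 \to \mathcal{O}_{(X,P)} \to f_*\mathcal{O}_{(Y,Q)} \to \mathcal{R} \to 0$ by the locally free $F$, the functor $\Hom_{(X,P)}(E,-)$ produces the left exact sequence
$$0 \to \Hom(E, F) \to \Hom(E, F \otimes f_*\mathcal{O}_{(Y,Q)}) \to \Hom(E, F \otimes \mathcal{R}).$$
The corollary therefore reduces to the vanishing $\Hom_{(X,P)}(E, F \otimes \mathcal{R}) = 0$.

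To establish this vanishing I would pull everything back to $(Y,Q)$: since $f$ is faithfully flat, the map
$$\Hom_{(X,P)}(E, F \otimes \mathcal{R}) \hookrightarrow \Hom_{(Y,Q)}(f^*E, f^*F \otimes f^*\mathcal{R})$$
is injective, so it suffices to kill the right-hand group. This is where Proposition~\ref{prop_negative} enters decisively: it yields a filtration
$$f^*\mathcal{R} = \mathcal{E}_1 \supset \mathcal{E}_2 \supset \cdots \supset \mathcal{E}_d = 0$$
whose successive quotients $L_i := \mathcal{E}_i/\mathcal{E}_{i+1}$ are line bundles with $\text{\rm deg}_Q(L_i) < 0$. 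Tensoring this filtration by $f^*F$ produces a filtration of $f^*F \otimes f^*\mathcal{R}$ with successive quotients $f^*F \otimes L_i$.

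A descending induction starting from $\mathcal{E}_d = 0$ and propagating along the short exact sequences $0 \to f^*F \otimes \mathcal{E}_{i+1} \to f^*F \otimes \mathcal{E}_i \to f^*F \otimes L_i \to 0$ reduces the global vanishing to showing $\Hom_{(Y,Q)}(f^*E, f^*F \otimes L_i) = 0$ for every $i$. This is immediate from semistability: by Proposition~\ref{prop_properties_stacky}(\ref{s:b2}) the bundles $f^*E$ and $f^*F$ are $\mu_{(Y,Q)}$-semistable with common slope $d\,\mu_P(E)$, and by Proposition~\ref{prop_properties_stacky}(\ref{s:a4}) each $f^*F \otimes L_i$ is $\mu_{(Y,Q)}$-semistable of slope $d\,\mu_P(E) + \text{\rm deg}_Q(L_i) < \mu_{(Y,Q)}(f^*E)$, so Proposition~\ref{prop_properties_stacky}(\ref{s:a2}) forces the Hom to vanish. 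I do not anticipate any serious obstacle; the only bookkeeping point to check is that the inclusion $\Hom(E,F) \hookrightarrow \Hom(E, F\otimes f_*\mathcal{O}_{(Y,Q)})$ arising from the exact sequence genuinely matches the pullback map $\phi \mapsto f^*\phi$ under the projection-formula isomorphism, which is a standard compatibility that can be verified using the base-change identity $f^*f_*\mathcal{O}_{(Y,Q)} \cong (p_1)_*\mathcal{O}_{\mathfrak{Y}\times_{\mathfrak{X}}\mathfrak{Y}}$ already exploited in the paper.
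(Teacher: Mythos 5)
Your proposal is correct and follows essentially the same route as the paper: adjunction plus the projection formula, the short exact sequence reducing everything to the vanishing of $\Hom_{\Vect(X,P)}\left(E, F\otimes \left(f_*\mathcal{O}_{(Y,Q)}/\mathcal{O}_{(X,P)}\right)\right)$, and the filtration of Proposition~\ref{prop_negative} by line bundles of negative $Q$-degree combined with semistability of the pullbacks. The only cosmetic difference is that you transfer the Hom group up to $(Y,Q)$ by faithfulness of $f^*$ and induct along the filtration, whereas the paper bounds $\mu_{P,\text{\rm max}}$ of the quotient downstairs and invokes Proposition~\ref{prop_properties_stacky}~\eqref{s:a2} once.
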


\begin{proof}
The adjoint isomorphism~\cite[Proposition~9.3.6, pg. 205]{Olsson} and the projection formula~\cite[Proposition~1.12]{Adjoint}, we have
\begin{equation}\label{eq_containment}
\begin{array}{rcl}
\Hom_{\Vect(Y,Q)}\left( f^* E, f^* F \right) & = & \Hom_{\Vect(X,P)}\left( E, f_* f^* F \right) \\
 & = & \Hom_{\Vect(X,P)}\left( E, F \otimes_{\mathcal{O}_{(X,P)}} f_*\mathcal{O}_{(Y,Q)} \right).
\end{array}
\end{equation}
We claim that
$$\mu_{P,\text{\rm max}} \left( F \otimes_{\mathcal{O}_{(X,P)}} \left( f_*\mathcal{O}_{(Y,Q)}/\mathcal{O}_{(X,P)} \right) \right) < \mu_{P}(F) = \mu_P(E).$$
By Proposition~\ref{prop_negative}, $f^*F \otimes_{\mathcal{O}_{(Y,Q)}} f^* \left( f_*\mathcal{O}_{(Y,Q)}/\mathcal{O}_{(X,P)} \right) \in \text{\rm Vect}(Y,Q)$ admits a filtration $\{V_i\}$ by sub-bundles so that the subsequent quotients $V_i/V_{i+1} = L_i$ are line bundles of negative $Q$-degrees. Then
$$\mu_{Q, \text{\rm max}} \left( f^*F \otimes_{\mathcal{O}_{(Y,Q)}} f^* \left( f_*\mathcal{O}_{(Y,Q)}/\mathcal{O}_{(X,P)} \right) \right) \leq \text{\rm max}_{i} \{ \mu_{Q, \text{\rm max}}\left( f^*F \otimes L_i \right)\} < \mu_{Q, \text{\rm max}}\left( f^*F \right).$$
From this, the claim follows. By Proposition~\ref{prop_properties_stacky}~\eqref{H:5}, we have
$$\text{ \rm Hom}_{\Vect(X,P)}\left( E, F \otimes_{\mathcal{O}_{(X,P)}} \left( f_*\mathcal{O}_{(Y,Q)}/\mathcal{O}_{X,P} \right) \right) = 0.$$
Now the result follows from the exact sequence
\begin{eqnarray*}
0 \longrightarrow \text{\rm Hom}_{\Vect(X,P)}\left( E, F \right) \longrightarrow \text{\rm Hom}_{\Vect(X,P)}\left( E, F \otimes_{\mathcal{O}_{(X,P)}} f_*\mathcal{O}_{(Y,Q)} \right) \longrightarrow \\
\longrightarrow \text{\rm Hom}_{\Vect(X,P)}\left( E, F \otimes_{\mathcal{O}_{(X,P)}} \left( f_* \mathcal{O}_{(Y,Q)}/\mathcal{O}_{(X,P)} \right) \right) \longrightarrow 0.
\end{eqnarray*}
\end{proof}

Using the above results, we first conclude that the orbifold slope stability is preserved under Galois genuinely ramified covers.

\begin{theorem}\label{thm_Galois_main}
Let $f \colon (Y,Q) \longrightarrow (X,P)$ be a cover of connected orbifold curve. Assume that $f_0$ is Galois, and $f$ is a genuinely ramified cover; see Definition~\ref{def_gen_ram}. For any $P$-stable vector bundle $E \in \Vect(X,P)$, the pullback $f^*E \in \Vect(Y,Q)$ is $Q$-stable.    
\end{theorem}

\begin{proof}
Suppose that $E \in \Vect(X,P)$ is $P$-stable. As usual, we denote the cover induced on the Coarse moduli curves by $f_0 \colon Y \longrightarrow X$. By Proposition~\ref{prop_properties_orbi_cover}~\eqref{s:b2}, the vector bundle $f^*E$ is $Q$-semistable. By virtue of Remark~\ref{rmk_iota_preserve} and Remark~\ref{rmk_geometric_results}\eqref{geo:1}, we may assume that $Q = f_0^*P$ and that $(X,P)$ is a geometric orbifold curve.

By Remark~\ref{rmk_natural_atlas_on_pullback}~\eqref{rmk:2}, $(Y,f_0^*P)$ is also a geometric orbifold curve. Then using the equivariant set up and respective equivalences of orbifold bundles with the equivariant bundles, the notion of an extended socle of \cite[Definition~1.5.6]{HL} is also well defined for orbifold bundles. As in the case of curves, the extended socle is invariant under automorphisms of the Coarse moduli curve, and a simple orbifold semistable bundle that is equal to its extended socle is orbifold stable; cf. \cite[Lemma~1.5.9]{HL}. So if $E$ is $P$-stable, the pullback bundle $f^*E$ is $f_0^*P$-polystable by Proposition~\ref{prop_properties_orbi_cover} that is also simple by Corollary~\ref{cor_main} and equals to its extended socle. Thus, $f^*E$ is $f_0^*P$-stable.
\end{proof}

Finally, we are ready to prove that the slope stability conditions are preserved under a genuinely ramified cover.

\begin{theorem}\label{thm_main}
Let $f \colon (Y,Q) \longrightarrow (X,P)$ be a cover of connected orbifold curve. Assume that $f$ is genuinely ramified; see Definition~\ref{def_gen_ram}. For any $P$-stable vector bundle $E \in \Vect(X,P)$, the pullback $f^*E \in \Vect(Y,Q)$ is $Q$-stable.
\end{theorem}

\begin{proof}
Let $E \in \Vect(X,P)$ be $P$-stable. We have the induced cover $f_0 \colon Y \longrightarrow X$. As in the proof of Theorem~\ref{thm_Galois_main}, we may assume that $Q = f_0^*P$ and that $(X,P)$ is a geometric orbifold curve.

Let $\bar{f}_0 \colon \bar{Y} \longrightarrow X$ be its Galois closure, and $\bar{f}_0$ factors as
\[
\bar{f}_0 \colon \bar{Y} \overset{\hat{f}_0} \longrightarrow Y \overset{f_0} \longrightarrow X.
\]
Since $P$ is a geometric branch data on $X$, by Remark~\ref{rmk_natural_atlas_on_pullback}~\eqref{rmk:2}, $\bar{Q} = \bar{f}_0^*P$ is a geometric branch data on $\bar{Y}$. Then $\bar{f}_0$ induces a cover $\bar{f} \colon (\bar{Y}, \bar{Q}) \longrightarrow (X,P)$ that need not be genuinely ramified. By Proposition~\ref{prop_maximal_etal_subcover}, there is a maximal sub-cover $g_0 \colon \hat{X} \longrightarrow X$ of $\bar{f}_0$ such that $g_0$ is an essentially \'{e}tale cover of $(X,P)$, and the Galois cover $\bar{f}$ factors as a composition
$$(\bar{Y},\bar{Q}) \overset{\hat{g}} \longrightarrow (\hat{X}, g_0^*P) = \underline{\text{\rm Spec}}\left( \text{\rm HN}(\bar{f}_* \mathcal{O}_{(\bar{Y}, \bar{Q})})_1 \right) \overset{g}\longrightarrow (X,P)$$
where $g$ is the maximal \'{e}tale sub-cover of $\bar{f}$; moreover, $\hat{g}$ is a genuinely ramified cover by Remark~\ref{rmk_factorization_discussion} and Proposition~\ref{prop_gen_ram_equivalences}. Since $\bar{f}_0$ is Galois, by the maximality of $g_0$, the cover $g_0$ is Galois for some group $G$. Set $\hat{P} \coloneqq g_0^*P$. Further, since $f$ is genuinely ramified and $g$ is \'{e}tale, by Proposition~\ref{prop_gen_ram_equivalences}\eqref{equiv:3'}, the fiber product stacky curve $(Y,f_0^*P) \times_{(X,P)} (\hat{X},\hat{P})$ is connected. Moreover, since the projection morphism $p_1 \colon (Y,f_0^*P) \times_{(X,P)} (\hat{X},\hat{P}) \longrightarrow (Y, f_0^*P)$ is an \'{e}tale cover, $(Y,f_0^*P) \times_{(X,P)} (\hat{X},\hat{P})$ is an orbifold curve $(\hat{Y}, p_{1, 0}^* f_0^*P)$ where $p_{1,0} \colon \hat{Y} \longrightarrow Y$ is the $G$-Galois cover, induced by $p_1$ on the Coarse moduli curves, that is an essentially \'{e}tale cover of $(Y,f_0^*P)$. Setting $q \coloneqq f \circ p_1$ and $q_0 \coloneqq f_0 \circ p_{1,0}$, we have $p_{1,0}^*f_0^*P = q_0^*P$.

Summarizing the above, we have the following commutative diagram. 
\begin{center}
\begin{tikzcd}
(\bar{Y},\bar{Q})
\arrow[drr, bend left, "\hat{g}"]
\arrow[ddr, bend right, "\hat{f}"]
\arrow[dr,  "\nu"] & & \\
& (\hat{Y}, q_0^*P) = (Y,f_0^*P) \times_{(X,P)} (\hat{X},\hat{P}) \arrow[dr, phantom, "\square"] \arrow[r, "p_2"] \arrow[d, "p_1"]
& (\hat{X},g_0^*P) = (\hat{X}, \hat{P}) \arrow[d, "g"] \\
& (Y, f_0^*P) \arrow[r, "f"]
& (X,P)
\end{tikzcd}
\end{center}

Since $E$ is $P$-stable, Proposition~\ref{prop_properties_orbi_cover}~\eqref{s:b4} states that $g^*E$ is $\hat{P}$-polystable. So, $g^*E$ is a finite direct sum $\oplus_i \, F_i$ of $\hat{P}$-stable bundles with $\mu_{\hat{P}}(F_i) = \mu_{\hat{P}}(g^*E)$. By Theorem~\ref{thm_Galois_main}, each $\hat{g}^*F_i$ is $\bar{Q}$-stable. So, for each $i$, we have
\[
\hat{g}^*F_i \cong \nu^* p_2^*F_i.
\]
By Proposition~\ref{prop_properties_orbi_cover}~\eqref{s:b3}, we conclude that each $p_2^*F_i$ is $q_0^*P$-stable. So, $q^*E$ and $\bar{f}^*E$ are $q_0^*P$-polystable and $\bar{Q}$-polystable, respectively. By Proposition~\ref{prop_properties_orbi_cover}~\eqref{s:b2}, $f^*E$ is $f_0^*P$-semistable.

Now, let $0 \neq S \subseteq f^*E$ be a $f_0^*P$-stable sub-bundle of the $f_0^*P$-polystable vector bundle $f^*E$ such that $\mu_{f_0^*P}(S) = \mu_{f_0^*P}(f^*E)$. We will show that $f^*E = S$. For this, we will first construct a sub-bundle $V \subseteq g^*E$ having the same $\mu_{\hat{P}}$ such that $p_2^*V = p_1^*S$. Using the fact that $p_1^*S \subseteq (f \circ p_1)^*E$ is a $G$-invariant inclusion, we will descend the bundle $V$ to a sub-bundle of $E$ on $(X,P)$ having the same $\mu_P$, and this will conclude the proof.

First, taking the pullback under the Galois morphism $\hat{f} \colon (\bar{Y},\bar{Q}) \longrightarrow (Y, f_0^*P)$, we obtain a sub-bundle $\hat{f}^* S \subseteq \hat{f}^* f^* E = \bar{f}^* E$. By Proposition~\ref{prop_properties_stacky}~\eqref{s:b1}, we have $\mu_{\bar{Q}}(\hat{f}^* S) = \mu_{\bar{Q}}(\bar{f}^* E)$. Since $S$ is a $f_0^*P$-stable bundle on $(Y,f_0^*P)$ and $E$ is a $P$-stable bundle on $(X,P)$, both $\hat{f}^* S$ and $\bar{f}^*E$ are $\bar{Q}$-polystable by Proposition~\ref{prop_properties_orbi_cover}~\eqref{s:b4}. Define the right ideal $\bar{\Theta}$ of the associative algebra $\text{\rm End}_{\Vect(\bar{Y},\bar{Q})}(\bar{f}^*E)$ by
$$\bar{\Theta} \coloneqq \{ \gamma \in \text{\rm End}_{\Vect(\bar{Y},\bar{Q})}(\bar{f}^*E) \hspace{.2cm} | \hspace{.2cm} \gamma(\bar{f}^* E) \subset \hat{f}^*S \}.$$
Since $\hat{f}^*S$ is a direct summand of $\bar{f}^*E$, the bundle $\hat{f}^* S$ coincides with the vector sub-bundle of $\bar{f}^*E$ generated by the images of the endomorphisms in $\bar{\Theta}$.

Similarly, via the pullback under the $G$-Galois \'{e}tale cover $p_1 \colon (\hat{Y},q_0^*P) \longrightarrow (Y,f_0^*P)$, we obtain a sub-bundle $p_1^* S \subseteq q^* E$ on $(\hat{Y},q_0^*P)$ such that $\mu_{q_0^*P}(p_1^*S) = \mu_{q_0^*P}(q^*E)$. We already saw that $q^*E$ is $q_0^*P$-polystable, and $p_1^* S$ is $q_0^*P$-polystable by Proposition~\ref{prop_properties_orbi_cover}~\eqref{s:b5}. So, $p_1^*S$ is generated by the images of endomorphisms in the right ideal $\hat{\Theta} \subset \text{\rm End}_{\Vect(\hat{Y},q_0^*P)}(q^*E)$ defined as
$$\hat{\Theta} \coloneqq \{ \gamma' \in \text{\rm End}_{\Vect(\hat{Y},q_0^*P)}(q^*E) \hspace{.2cm} | \hspace{.2cm} \gamma'(q^*E) \subset p_1^*S \}.$$

Applying Corollary~\ref{cor_main} to the genuinely ramified Galois map $\hat{g} \colon (\bar{Y},\bar{Q}) \longrightarrow (\hat{X},\hat{P})$, we obtain
\begin{equation}\label{eq_7}
\text{\rm End}_{\Vect(\bar{Y},\bar{Q})}(\bar{f}^*E) = \text{\rm End}_{\Vect(\hat{X},\hat{P})}(g^* E).
\end{equation}
As an element $\gamma \in \text{\rm End}_{\Vect(\hat{X},\hat{P})}(g^* E)$ is mapped to $\hat{g}^* \gamma \in \text{\rm End}_{\Vect(\bar{Y},\bar{Q})}(\bar{f}^*E)$ under $\hat{g}^*$, the associative algebra structures are preserved. Let the right ideal $\hat{\Theta}' \subset \text{\rm End}_{\Vect(\hat{X},\hat{P})}(g^* E)$ be the image of $\bar{\Theta}$. Since $g^*E$ is a $\hat{P}$-polystable bundle on $(\hat{X}, \hat{P})$, the image of any endomorphism of it is a sub-bundle. Let $V$ be the sub-bundle of $g^* E$ generated by the images $\gamma(g^* E)$ for $\gamma \in \hat{\Theta}'$. Then we have $\hat{g}^* V = \hat{f}^*S$. Moreover, $V$ is again a $\hat{P}$-polystable bundle on $(\hat{X},\hat{P})$, and by Lemma~\ref{lem_pullback_div_line_bundles}, we have
$$\mu_{\hat{P}} \left( V \right) = \mu_{\hat{P}} \left( g^* E \right).$$

We have the following inclusions (note that Equation~\eqref{eq_containment} holds for any cover)
$$\text{\rm End}_{\Vect(\hat{X},\hat{P})}(g^* E) \subseteq \text{\rm End}_{\Vect(\hat{Y},q_0^*P)}(q^*E) \subseteq \text{\rm End}_{\Vect(\bar{Y},\bar{Q})}(\bar{f}^*E).$$
By Equation~\eqref{eq_7}, each of the above containment is an equality, and the associative algebra structures are preserved. Thus the ideal $\hat{\Theta}'$ maps onto the ideal $\hat{\Theta}$ which maps onto the ideal $\bar{\Theta}$. Since $V$ is generated by the images of the endomorphisms in $\hat{\Theta}'$, and $p_1^*S$ is generated by the images of the endomorphisms in $\hat{\Theta}$, we have
\begin{equation}\label{eq_8}
p_1^*S = p_2^* V.
\end{equation}

Since $p_1$ is a $G$-Galois \'{e}tale cover, the injective morphism $p_1^*S \subseteq q^* E$ is $G$-equivariant. By our construction, the injective morphism $V \subseteq g^*E$ is also $G$-equivariant. As $g$ is a representable $G$-Galois cover, Lemma~\ref{lem_push_exactness_rep_cover} show that there is a sub-bundle $W \subseteq E$ on $(X,P)$ such that $g^*W = V$. Thus $f^*W = S$. By Proposition~\ref{prop_properties_stacky}\eqref{s:b1}, we have
$$\mu_P(W) = \mu_P(E).$$
Since $E$ was assumed to be $P$-stable, we obtain $W = E$, and consequently, $S = f^* E$.
\end{proof}

Now let us see that the converse of the above theorem is also true.

\begin{theorem}\label{thm_converse}
Let $f \colon (Y,Q) \longrightarrow (X,P)$ be a non-trivial \'{e}tale cover of connected orbifold curves. There exists a $P$-stable vector bundle $E \in \text{\rm Vect}(X,P)$ such that $f^*E$ is not $Q$-stable.
\end{theorem}

\begin{proof}
We first observe that if $u_0 \colon Z \longrightarrow X$ is a $G$-Galois cover of smooth projective connected $k$-curves for some non-trivial finite group $G$, then there is a $G$-equivariant vector bundle $\mathcal{E}$ on $Z$ that is $G$-stable, but not stable. We saw such an example in Remark~\ref{rmk_G-stability_vs_stability} when $G$ is non-abelian, but our following construction will be used to address the general case as in the statement. Fix a closed point $x \in X$ in the \'{e}tale locus of $u_0$, and fix a point $z_1 \in u_0^{-1}(x)$. By \cite[Equation~(3.2)]{Das2}, the $G$-set $u_0^{-1}(x)$ equipped with the simply transitive action of $G$ is in a bijective correspondence with $G$; we have $u_0^{-1}(x) = \{z_g\}_{g \in G}$ where we write $1$ for the identity element of $G$. Set $\mathcal{E} \coloneqq \bigoplus_{g \in G} \mathcal{O}_Z(-z_g)$, equipped with a $G$-action that permutes the line sub-bundles, i.e. each $h \in G$ defines an automorphism of $\mathcal{E}$ by sending $\mathcal{O}_Z(-z_g)$ isomorphically onto $\mathcal{O}_Z(-z_{hg})$. Then $\mathcal{E}$ is a polystable vector bundle of rank $|G|$ and degree $-|G|$.

Let $u \colon Z \longrightarrow [Z/G] = (X,P)$ be the natural atlas. Then by the above construction, we have $\mathcal{E} \cong u^*u_* \mathcal{O}_Z(-z_1)$. By Remark~\ref{rmk_G-stability_vs_stability}, the vector bundle $u_* \mathcal{O}_Z(-z_1)$ is $P$-polystable. Equivalently, $\mathcal{E}$ is $G$-polystable. We claim that $\mathcal{E}$ is $G$-stable, but not $H$-stable for any subgroup $H \lneq G$.

Suppose that $\mathcal{F} \subset \mathcal{E}$ be a non-zero $G$-stable sub-bundle such that $\mu(\mathcal{F}) = \mu(\mathcal{E}) = -1$. Forgetting the $G$-actions for the moment, $\mathcal{F}$ is a non-zero sub-bundle of $\mathcal{E} \cong \mathcal{O}_Z(-z_1) \otimes_k \text{\rm Hom}(\mathcal{O}_Z(-z_1) , \mathcal{E}) \cong \mathcal{O}_Z(-z_1) \otimes_k k[G]$. Twisting by the line bundle $\mathcal{O}_Z(z_1)$, we see that $\mathcal{F} \otimes \mathcal{O}_Z(z_1)$ is a non-zero sub-bundle of $\mathcal{O}_Z \otimes_k k[G]$ of degree $0$. Then there is a rational map from $Z$ to the Grassmannian of sub-spaces of $k^{\oplus |G|}$ of dimension $a \coloneqq \text{\rm rk}\left( \mathcal{F} \otimes \mathcal{O}_Z(z_1) \right) = \text{\rm rk}\left( \mathcal{F} \right)$, and $\mathcal{F} \otimes \mathcal{O}_Z(z_1)$ corresponds to the pullback of the universal bundle on the Grassmannian under the rational map. So $\mathcal{F} \otimes \mathcal{O}_Z(z_1)$ is of the form $\mathcal{O}_Z \otimes_k V$ for a $k$-sub-space $V \subset k[G]$ of dimension $a$. Consequently, $\mathcal{F} \cong \mathcal{O}_Z(-z_1) \otimes_k V$. Since $\mathcal{F}$ is also a $G$-equivariant sub-bundle of $\mathcal{E}$, the bundle $\mathcal{F}$ is of the form
$$\mathcal{F} \cong \bigoplus \mathcal{O}_Z(-z_g)$$
where $g$ varies in a subset of $G$ of size $a$. Since $\mu(\mathcal{F}) = -1$, and the $G$-orbit of a point $z_g$ has size $|G|$, we conclude that $\text{\rm rk}(\mathcal{F}) = \text{\rm rk}(\mathcal{E})$, and hence $\mathcal{F} = \mathcal{E}$. This proves that $\mathcal{E}$ is $G$-stable.

For any sub-group $H \lneq G$, we have an $H$-equivariant sub-bundle $\mathcal{E}_H \coloneqq \bigoplus_{h \in H} \mathcal{O}_Z(-z_{h})$ of $\mathcal{E} \in \text{\rm Vect}^H(Z)$. By our above discussion, $\mathcal{E}_H$ is $H$-stable, and $\mu(\mathcal{E}_H) = -1 = \mu(\mathcal{E})$. This shows that $\mathcal{E}$ cannot be $H$-stable for any proper subgroup $H$ of $G$. We also conclude that there is a $P$-stable vector bundle $E \cong u_*^G \left( \mathcal{E} \right)$ under the equivalence $\text{\rm Vect}(X,P) \cong \text{\rm Vect}^G(Z)$, such that $u^*E \cong \mathcal{E}$ is not stable. Now we move to the general case as in the statement of the theorem. With our above observation, we may assume that $P$ is a non-trivial branch data on $X$.

Since $f$ is an \'{e}tale cover, by Lemma~\ref{lem_rep_etale_cover}, the cover $f_0 \colon Y \longrightarrow X$ of the Coarse moduli curves is an essentially \'{e}tale cover of $(X,P)$, and $Q = f_0^*P$. By Remark~\ref{rmk_geometric_results}, there is a maximal geometric branch data $P'$ on $X$ with $P \geq P'$, and every essential \'{e}tale cover of $(X,P)$ is also an essentially \'{e}tale cover of $(X,P')$. Thus the cover $f' \colon (Y,f_0^*P') \longrightarrow (X,P')$ is also an \'{e}tale cover. Suppose that there is a $P'$-stable vector bundle $E' \in \text{\rm Vect}(X,P')$ such that $f'^*E'$ is not $f_0^*P'$-stable. Writing $\iota \colon (X,P) \longrightarrow (X,P')$ and $j \colon (Y,f_0^*P) \longrightarrow (Y, f_0^*P')$ for the induced covers, $f^*\iota^*E' \cong j^* f'^*E'$ is not $f_0^*P$-stable by Proposition~\ref{prop_properties_stacky}\eqref{s:b3} and Remark~\ref{rmk_iota_preserve}. So it is enough to assume that $P$ is a geometric branch data on $X$. Then there is a $G$-Galois cover $g_0 \colon Z \longrightarrow X$ of smooth projective connected curves for some non-trivial finite group $G$, with $[Z/G] = (X,P)$, and $g_0$ factors as
$$g_0 \colon Z \overset{h_0}\longrightarrow Y \overset{f_0}\longrightarrow X.$$
The cover $h_0$ is necessarily Galois, for some subgroup $H \lneq G$ since $f_0$ is a non-trivial cover. We also see that $h \colon Z \longrightarrow (Y,f_0^*P)$ is an $H$-Galois \'{e}tale cover, and we also have the canonical map $u \colon Z \longrightarrow (X,P)$ that is a $G$-Galois \'{e}tale cover. From the first part of the proof, there is a $G$-stable vector bundle $\mathcal{E}$ on $Z$ that is not $H$-stable under the induced $H$-action. Under the equivalences
$$\text{\rm Vect}(X,P) \overset{u^*}\longrightarrow \text{\rm Vect}^G(Z),$$
$$\text{\rm and } \, \text{\rm Vect}(Y,f_0^*P) \overset{h^*}\longrightarrow \text{\rm Vect}^H(Z),$$
we obtain a vector bundle $E \in \text{\rm Vect}(X,P)$, with $u^*E \cong \mathcal{E}$, such that $f^*E$ is not $f_0^*P$-stable since $h^*f_0^*E \cong u^*E$ as $H$-equivariant bundles. This finishes the proof.
\end{proof}

\begin{remark}
Note that the above also gives an alternate proof of \cite[Proposition~5.2]{BP}.
\end{remark}

We have the following easier proofs in certain situations.

\begin{example}\label{eg_conv}
Let $f \colon (Y,Q) \longrightarrow (X,P)$ be a cover of connected orbifold curves. If the cover $f_0 \colon Y \longrightarrow X$ of underlying moduli curves is not genuinely ramified, there exists a $P$-stable vector bundle $E$ such that $f^*E$ is not $Q$-stable. To see this, note that by \cite[Proposition~5.1]{BP}, there is a stable vector bundle $E$ on $X$ such that $f_0^*E$ is not stable. Let $\iota \colon (X,P) \longrightarrow X$ and $j \colon (Y,Q) \longrightarrow Y$ be the respective Coarse moduli morphisms. Then by Remark~\ref{rmk_iota_preserve}, $\iota^*E \in \Vect(X,P)$ is $P$-stable. If $f^*\iota^*E \cong j^* f_0^*E$ is $Q$-stable, by loc. cit., $f_0^*E$ is a stable vector bundle on $Y$, a contradiction.
\end{example}

\begin{remark}[Comparison to other literature]\label{rmk_condition}
In \cite{BKP}, the authors have put necessary conditions on the orbifold curve $(X,P)$ such that the fiber product $Y \times_X (X,P)$ is again an orbifold curve and concluded Theorem~\ref{thm_main} in this case.

The authors of the above article generalize their result in \cite{BKP2} where they work over an algebraically closed field of characteristic $0$, and consider the parabolic bundles. Given a cover $f_0 \colon Y \longrightarrow X$ and an effective divisor $D = \sum n_x x$ on $X$, they define a sub-bundle $W \subset f_{0,*}Y$ containing $\mathcal{O}_X$. If $W$ is a line bundle, they show that for every parabolic stable bundle on $X$ whose parabolic weights at $x \in \text{\rm Supp}(D)$ are integral multiples of $1/n_x$, the pullback bundle on $Y$ is parabolic stable where the parabolic structure is inherited from $X$. It is not clear to us whether this bundle $W$ coincides with the vector bundle $\iota_*\left( \text{\rm HN}(f_*\mathcal{O}_{(Y,Q)}) \right)$ on $X$, when we appropriately identify the parabolic bundles with orbifold bundles as in Remark~\ref{rmk_parabolic_slope_same_as_P_slope}.
\end{remark}

\appendix

\section{Formal orbifold curves: a stacky view}\label{sec_equiv}
We lay out the outline of the folklore result, known to the experts (mentioned in \cite[Appendix~B, Theorem~B.1]{Mitsui}) that a formal orbifold curve (see Definition~\ref{def_f_o_c}) is the same as an orbifold curve (see Definition~\ref{def_orbi_curve}). Moreover, the categories of vector bundles on them coincide. This view lets us compare covers on the respective categories, define the orbifold slope stability conditions in an intrinsic way, and analyze them under the covers of orbifold curves.

First, we see that there is functorial assignment $\alpha$ of a formal orbifold curve to an orbifold curve. Let $X$ be a smooth projective connected $k$-curve and $P$ be a branch data on $X$. When $P$ is the trivial branch data, we set $\alpha(X) \coloneqq X$. Now suppose that $P$ is a non-trivial branch data. Then $\text{\rm BL}(P) = \{x_1, \ldots, x_r\}$, $r \geq 1$. Set $U_0 \coloneqq X - \text{\rm BL}(P)$. By \cite[Proposition~4.13]{K}, there exists a Zariski open covering $\{U_i\}_{0 \leq i \leq r}$ of $X$ such that the following hold: (1) $x_i \in U_i$, and $x_i \not\in U_j$ for any $1 \leq i \leq r$; (2) there is a smooth irreducible affine curve $V_i$, equipped with an action of a finite group $G_i$, such that $V_i \longrightarrow U_i$ is a $G_1 = \text{\rm Gal}\left( k(V_i)/ k(U_i) \right)$-Galois cover of smooth irreducible $k$-curves that is \'{e}tale away from $x_i \in U_i$. Set $\alpha(X,P)$ to be the DM stack with Coarse moduli curve $X$, obtained by gluing the quotient stacks $[Z_i/G_i]$ over the covering $\{U_i\}$ of $X$. By the construction, $\alpha(X,P)$ is an orbifold curve whose stacky points are $x_i, \, 1 \leq i \leq r$. It is also clear from the construction that a cover $f \colon (Y,Q) \longrightarrow (X,P)$ of orbifold curves necessarily defines a cover $\alpha(f) \colon \alpha(Y,Q) \longrightarrow \alpha(X,P)$; cf. Definition~\ref{def_stacky_curve}, Definition~\ref{def_foc_maps} and Remark~\ref{rmk_cover_curves}.

Conversely, given an orbifold curve $\mathfrak{X}$, let $U$ denote the maximal open sub-curve. If $\iota \colon \mathfrak{X} \longrightarrow X$ is the Coarse moduli morphism, $\mathfrak{X} \times_X U \cong U$ by definition. We may assume that $U$ is a proper subset of $\mathfrak{X}$ as otherwise, we associate the curve $\lambda(\mathfrak{X}) = X$ with the trivial branch data on it to this stack. Then $\mathfrak{X} - U$ is the finite set of stacky points on $\mathfrak{X}$. Let $x$ be a stacky point (which corresponds to a residual gerbe in $\mathfrak{X}$ over a closed point $x \in X(k)$, also denoted by $x$; see discussion in the beginning of Section~\ref{sec_degree}). Let $G_x$ be the stabilizer group at $x$. By \cite[Theorem~11.13.1, page 230]{Olsson}, there is an \'{e}tale neighborhood $U_x$ of $x$ in $X$ and a finite $U_x$-scheme $V_x$, equipped with an action of the group $G_x$, such that $\mathfrak{X} \times_X U_x \cong [V_x/G_x]$. For any point $u_x \in U_x$ over $x \in X$, and a point $v_x \in V_x$ lying over $u_x$, the field extension $\text{\rm QF}\left( \widehat{\mathcal{O}}_{V_x,v_x} \right)/\text{\rm QF}\left( \widehat{\mathcal{O}}_{U_x,u_x} \right)$ is a $G_x$-Galois field extension. Since $U_x \longrightarrow X$ is an \'{e}tale neighborhood of $x$, we have an isomorphism $K_{X,x} = \text{\rm QF}\left( \widehat{\mathcal{O}}_{X,x} \right) \cong \text{\rm QF}\left( \widehat{\mathcal{O}}_{U_x,u_x} \right)$. This produces a $G_x$-Galois extension $P(x)/K_{X,x}$. Note that by shrinking $U_x$, we may assume that the morphism $V_x \longrightarrow U_x$ is a finite separable surjection of smooth affine curves, and $G_x$ is the Galois group of the corresponding extension of function fields. This shows that the $G_x$-Galois extension $P(x)/K_{X,x}$ does not depend on the choice of the points $u_x$ and $v_x$, upto field isomorphisms in a chosen separable closure of $K_{X,x}$. We set $\lambda(\mathfrak{X})$ to be the formal orbifold curve $(X,P)$ where $P$ is the branch data described above. Further, given a cover $f \colon \mathfrak{Y} \longrightarrow \mathfrak{X}$ of orbifold curves, we obtain a cover $f_0 \colon Y \longrightarrow X$ of the Coarse moduli curves. Consider a closed point $x \in X(k)$, and let $y \in f_0^{-1}(x) \in Y(k)$. Let $G_x$ and $G'_y$ be the respective stabilizer groups at the corresponding residual gerbes $x$ and $y$. Consider the pair $(U_x, V_x)$ as before where $U_x \longrightarrow X$ is an \'{e}tale neighborhood of $x$, $V_x \longrightarrow U_x$ a finite separable surjection of smooth affine curves, and $G_x$ is the Galois group of the corresponding extension of function fields. Similarly, we have a pair $(V'_y, U'_y)$ for $y \in Y$. Further shrinking the neighborhoods, we may assume that they are compatible with the given morphism $f$ of orbifold curves. More precisely, $f$ induces finite surjective separable morphisms $f_y \colon U'_y \longrightarrow U_x$ and $h_y \colon V'_y \longrightarrow V_x$ making the following diagram commute.
\begin{equation}\label{diag_3}
\begin{tikzcd}
V'_y \arrow[r, "h_y"] \arrow[d, swap, "g'_y"] & V_x \arrow[d, "g_x"]\\
U'_y \arrow[r, "f_y"] & U_x
\end{tikzcd}
\end{equation}
We conclude that $Q(y)$ contains the compositum $P(x) \cdot K_{Y,y}$. So $f \colon \mathfrak{Y} \longrightarrow \mathfrak{X}$ defines a morphism $\lambda(f) \colon (Y,Q) \longrightarrow (X,P)$ of connected formal orbifold curves (up to isomorphism).

From the above functorial constructions, we immediately conclude the following (also see \cite[Appendix~B, Theorem~B.1]{Mitsui} and \cite[Example~4.3]{Noohi}).

\begin{theorem}\label{thm_equiv}
The functor $\lambda$ defines an equivalence of categories
$$\begin{pmatrix}
\text{Isomorphism classes of } \\ \text{orbifold curves and their covers}
\end{pmatrix} \overset{\sim} \longrightarrow \begin{pmatrix}
\text{Isomorphism classes of }\\ \text{formal orbifold curves and covers} \end{pmatrix},$$
with a quasi-inverse given by $\alpha$. Under this equivalence, an \'{e}tale cover corresponds to an \'{e}tale cover, and $\lambda$ preserves the \'{e}tale fundamental groups.
\end{theorem}

In \cite[Definition~4.5]{KM}, the authors define a vector bundle on a formal orbifold curve $(X,P)$ as a vector bundle on $X$ satisfying natural local actions and compatibility. Using the \'{e}tale local description of a stacky curve as before, it is not hard to see (which we reserve for the reader) that the above equivalence of categories also produces the following equivalence.

\begin{theorem}\label{thm_equiv_bundles}
Let $\mathfrak{X}$ be an orbifold curve. Consider the formal orbifold curve $(X,P) = \lambda(\mathfrak{X})$, where $\lambda$ is the categorical equivalence defined in Theorem~\ref{thm_equiv}. We have an equivalence of categories
$$\Vect(\mathfrak{X}) \overset{\sim} \longrightarrow \Vect(X,P)$$
between the category $\Vect(\mathfrak{X})$ of vector bundles on the orbifold curve $\mathfrak{X}$ in the sense of \cite[Definition 7.18]{V} and the category $\Vect(X,P)$ of algebraic parabolic bundles on $X$ with branch data $P$ in the sense of \cite[Definition~4.6]{KM}.
\end{theorem}

We end this section by mentioning two useful results which have been used extensively throughout our article --- the projection formula and the base change theorem.

\begin{proposition}\label{prop_proj_bc}
\begin{enumerate}[leftmargin=*]
\item (Base Change Theorem; {\cite[Proposition~13.1.9, pg. 122]{LMB} or \cite[Proposition~A.1.7.4]{Brochard}}) Given a cartesian square
\begin{equation*}
\begin{tikzcd}
(Y,Q) \times_{(X,P)} (Z,R) \arrow[r, "\text{\rm pr}_2"] \arrow[d, "\text{\rm pr}_1"] \arrow[dr, "\square", phantom] & (Z,R) \arrow[d, "f"] \\
(Y,Q) \arrow[r, "g"] & (X,P)
\end{tikzcd}
\end{equation*}
where $f$ and $g$ are covers of orbifold curves, and a bundle $E$ on $(Z,R)$, for any $i \geq 0$, we have the following isomorphism by the flat base change theorem.
\[
g^*\mathrm{R}^if_*E \cong \mathrm{R}^i\text{\rm pr}_{1, *}\text{\rm pr}_2^*E.\label{bc}
\]
\item (Projection Formula; \cite[Proposition~1.12]{Adjoint}) For any cover $f \colon (Y,Q) \longrightarrow (X,P)$ of orbifold curves, and bundles $E \in \Vect(X,P), \, F \in \Vect(Y,Q)$, there is a canonical isomorphism
\[
f_* \left( f^* E \otimes F \right) \cong E \otimes f_*F.\label{pf}
\]
\end{enumerate}
\end{proposition}

\bibliographystyle{amsplain}

\end{document}